\newcommand\blx@noerroretextools\relax
\setlist{nosep,leftmargin=*}
\newcommand{\parahead}[1]{{\bfseries #1.}}
\title{Demographic Parity Constrained Minimax Optimal Regression under Linear Model}
\author[1,3]{Kazuto Fukuchi\thanks{fukuchi@cs.tsukuba.ac.jp}}
\author[2,3]{Jun Sakuma\thanks{sakuma@c.titech.ac.jp}}
\affil[1]{University of Tsukuba, Japan}
\affil[2]{Tokyo Institute of Technology}
\affil[3]{RIKEN AIP, Japan}
\begin{document}

\maketitle

\begin{abstract}
  We explore the minimax optimal error associated with a demographic parity-constrained regression problem within the context of a linear model. Our proposed model encompasses a broader range of discriminatory bias sources compared to the model presented by \textcite{Chzhen2022ARegression}. Our analysis reveals that the minimax optimal error for the demographic parity-constrained regression problem under our model is characterized by $\Theta(\nicefrac{dM}{n})$, where $n$ denotes the sample size, $d$ represents the dimensionality, and $M$ signifies the number of demographic groups arising from sensitive attributes. Moreover, we demonstrate that the minimax error increases in conjunction with a larger bias present in the model.
\end{abstract}

\section{Introduction}

Machine learning techniques have been incorporated into numerous automated decision-making systems, spanning critical domains such as employment, credit assessment, insurance, and security. Nevertheless, these systems can exhibit discriminatory behavior towards specific demographic groups, including gender, race, and ethnicity, potentially causing significant societal ramifications. This issue, known as the fairness problem, has attracted substantial attention within the machine learning research community. The growing focus on the fairness problem primarily arises from reported instances of unfair behavior in real-world systems, encompassing recidivism risk prediction~\autocite{Angwin2016MachineBias}, hiring practices~\autocite{Dastin2018AmazonWomen}, facial recognition~\autocite{Crockford2020HowRacist,Najibi2020RacialTechnology}, and credit scoring~\autocite{Vigdor2019AppleComplaints}.

Motivated by these concerns, a considerable body of research has explored regression problems subject to fairness constraints~\autocite{Komiyama2018NonconvexConstraints, Xie2017ControllableLearning, Moyer2018InvariantTraining, Agarwal2019FairAlgorithms, Chzhen2020FairBarycenters, Chzhen2020FairRecalibration, Mary2019Fairness-AwareTreatments, Narasimhan2020PairwiseRegression}. Numerous regression algorithms incorporating various fairness constraints have been developed to accommodate diverse contexts, with demographic parity~\autocite{Pedreshi2008Discrimination-awareMining} and equalized odds~\autocite{Hardt2016EqualityLearning} being the predominant fairness constraints adopted by these methods.

In this study, we focus on the regression problem under the fairness constraint of demographic parity~\autocite{Pedreshi2008Discrimination-awareMining}. Existing literature primarily concentrates on the development of fair regression algorithms, and their performance evaluation predominantly relies on empirical analyses. Such evaluations, however, only offer performance guarantees for specific scenarios explored in the experiments, which may result in poor performance in unexamined situations. To ensure the algorithm's robust performance across a wider range of contexts and obtain a comprehensive understanding of the fair regression problem, a theoretical analysis of statistical efficiency is indispensable.

Several studies have introduced fair regression algorithms accompanied by theoretical analyses of their statistical efficiency in terms of accuracy and fairness. \textcite{Agarwal2019FairAlgorithms} designed a demographic parity-based fair regression algorithm using reduction methods~\autocite{Agarwal2018AClassification} and established upper bounds on its empirical approximation errors for accuracy and fairness using Rademacher complexity. \textcite{Chzhen2020FairRecalibration} proposed a discretization-based fair regression algorithm, deriving upper bounds on the mean squared excess risk for accuracy and a Kolmogorov distance-based score for demographic parity as fairness guarantees. \textcite{Chzhen2020FairBarycenters} derived the Bayes optimal regressor under a demographic parity constraint, providing upper bounds on the mean absolute deviation from the Bayes optimal regressor for accuracy and a Kolmogorov distance-based score for demographic parity as fairness guarantees. Despite ensuring low error and fair treatment even in non-linear models, it remains unclear if these guarantees represent optimal performance among possible algorithms.

\begin{table}
  \centering
  \caption{Comparison between \textcite{Chzhen2022ARegression}'s and our models. Each checkmark signifies the presence of an influence exerted by the sensitive attribute on the respective variable. }\label{tbl:comp-models}
  \begin{tabular}{l|lll}
    \toprule
       & \makecell{partial\\coefficients} & intercept & \makecell{non-sensitive\\features} \\
    \midrule
      \textcite{Chzhen2022ARegression} &  & \checkmark & \\
      ours & \checkmark & \checkmark & \checkmark \\
    \bottomrule
  \end{tabular}
\end{table}
\parahead{Minimax optimal fair regression}
Numerous researchers have investigated minimax optimal regression algorithms, the best possible algorithm, without addressing fairness considerations~\autocite{Stone1980OptimalEstimators,Tsybakov2003OptimalAggregation,Oliveira2016TheSquares,Mourtada2022ExactMatrices}. In contrast to standard regression problems, minimax optimality in fair regression problems remains relatively unexplored, with a notable exception being the recent work by \textcite{Chzhen2022ARegression}. They examine minimax optimality in fair regression problems, incorporating demographic parity constraints within the following linear model:
\begin{align}
Y = \abrace*{\beta^*, X} + b_S + \xi \where X \sim N(0,\Sigma). \label{eq:model-chzhen}
\end{align}
In this model, $Y$, $X$, and $S$ represent the outcome, non-sensitive features, and a sensitive attribute, respectively. $\abrace{\cdot,\cdot}$ denotes the inner product, $\xi$ represents zero-mean noise, and $\Sigma$ is an arbitrary covariance matrix. For example, in salary calculations, $X$ and $S$ correspond to working hours and gender, respectively, with $b_S$ and $\beta^*$ signifying the base salary and hourly wage. In \cref{eq:model-chzhen}, the salary $Y$ is determined by the base salary $b_S$ and the product of working hours $X$ and an hourly wage $\beta^*$.

The model in \cref{eq:model-chzhen} exhibits limitations pertaining to its applicability across various scenarios. We elucidate these limitations by discussing the notion of {\em direct discrimination} and {\em indirect discrimination}, summarized succinctly in the second row of \cref{tbl:comp-models}. Direct discrimination occurs when the sensitive attribute influences the outcome, regardless of non-sensitive features. The model in \cref{eq:model-chzhen} can treat direct discrimination resulting from the dependency of the intercept $b_S$ on $S$; for example, it can capture discrimination due to basing base salary on gender~(third column in \cref{tbl:comp-models}). However, it is imperative to underscore that the model in \cref{eq:model-chzhen} fails to handle direct discrimination arising from the partial~(regression) coefficients $\beta^*$, as these are independent of $S$; for instance, it cannot accommodate discrimination due to gender-dependent hourly wages~(second column in \cref{tbl:comp-models}).

Indirect discrimination~(or redlining effect~\autocite{Calders2010ThreeClassification}) constitutes another source of unfair bias, arising when the sensitive attribute influences the outcome through its correlation with non-sensitive features. The presence of the dependency between non-sensitive features and the sensitive attribute signifies indirect discrimination. In the model in \cref{eq:model-chzhen}, non-sensitive features $X$ is independet from the sensitive attribute $S$, thereby implying an absence of indirect discrimination~(forth column in \cref{tbl:comp-models}).

\textcite{Chzhen2022ARegression} effectively revealed the minimax optimal error for fair regression problems involving direct discrimination due to varying intercepts associated with sensitive attributes. However, their research does not address direct discrimination from partial coefficients and indirect discrimination through non-sensitive features.

\parahead{Our model and contributions} 
In this study, we investigate the minimax optimality of the fair regression problem in the context of the following model:
\begin{align}
Y = \abrace*{\beta^*_S, X} + \xi \where X \sim N(\mu_S,\sigma_XI), \label{eq:model}
\end{align}
where $\sigma_X > 0$, and $I$ denotes the identity matrix. The subscript in $\beta^*_S$ and $\mu_S$ signifies that our model varies regression coefficients and the mean of non-sensitive features based on the sensitive attribute.

Compared to the model proposed by \textcite{Chzhen2022ARegression}, our model accommodates a broader range of direct and indirect discrimination. These discriminations can be characterized as follows:
\begin{itemize}
    \item (Direct discrimination) Our model accommodates direct discrimination through discrepancies in $\beta^*_S$ concerning $S$, as the regression coefficients $\beta^*_S$ hinge on the sensitive attribute $S$~(second and third columns on the third row in \cref{tbl:comp-models}). This includes, for instance, discrimination arising from varying base salaries and hourly wages. Divergent partial coefficients yield varied outcome variance amongst $S$, while disparate intercepts relative to $S$ merely alter the outcome's mean. Hence, our model introduces an additional challenge of attenuating direct discrimination through disparate variance, alongside mitigating direct discrimination through disparate mean. This presents a stark contrast to \textcite{Chzhen2022ARegression}'s model, which solely focuses on mitigating discrimination via the mean without considering the variance.
    \item (Indirect discrimination) The sensitive attribute $S$ affects the mean of non-sensitive features $X$, as denoted by the subscript of $\mu_S$. Our model thereby introduces indirect discrimination through variations in $\mu_S$ with respect to $S$~(e.g., disparate working hours by gender). To alleviate this form of indirect discrimination, $\mu_S$ needs to be estimated to adjust the learned regressor, thereby ensuring its output remains invariant to differing $\mu_S$. Therefore, our model presents an additional complexity in estimating $\mu_S$ for mitigating indirect discrimination.
\end{itemize}
Overall, our model demonstrates an expanded dependency of partial coefficients~(direct discrimination) and non-sensitive features~(indirect discrimination) on the sensitive attribute~(second and fourth columns of \cref{tbl:comp-models}). 

The principal contribution of this paper lies in the establishment of matching upper and lower bounds on the minimax optimal error~(i.e., the error corresponding to the minimax optimal regression algorithm) and the proposition of a regression algorithm that achieves this optimal error under \cref{eq:model}. The optimal error elucidates several insights:
\begin{itemize}
\item (Direct discrimination) The optimal error comprises a term reflecting the outcome's variance heterogeneity but excludes that of the outcome's mean. This insight implies that mitigating direct discrimination due to the outcome's variance sacrifices statistical efficiency, whereas addressing direct discrimination due to the outcome's mean does not entail this cost. This term effectively quantifies the cost of mitigating direct discrimination in variance and is absent from the optimal error of the \textcite{Chzhen2022ARegression}'s model. Its identification, thus, signifies a crucial contribution of our research.
\item (Indirect discrimination) Our lower bound is independent of the term associated with indirect discrimination. Although this evidence is not definitive, it hints at the potential for mitigating indirect discrimination without additional costs under certain conditions. This observation sets the stage for future research focused on developing cost-effective strategies to tackle indirect discrimination.
\end{itemize}
Our technical contributions to establish these bounds are detailed in \cref{sec:tech-diff}.

\parahead{Notations}
Given a positive integer $m$, define $[m] = \cbrace{1, ..., m}$. For a finite set $A$, denote its cardinality by $\abs{A}$. Given an event $\event$, its complement is represented as $\event^c$, and its probability is denoted by $\p\cbrace{\event}$. For a random variable $X$, its expectation is $\Mean[X]$, and its associated sigma-algebra is $\sigma(X)$. For two real values $a$ and $b$, the notations $a \lor b = \max\cbrace{a, b}$ and $a \land b = \min\cbrace{a, b}$ are used. For a square matrix $A \in \RealSet^{d \times d}$, its maximum and minimum eigenvalues are denoted by $\lambda_{\max}(A)$ and $\lambda_{\min}(A)$, respectively, and its transpose is represented by $A^{\top}$. The set of unit vectors is given by $\mathbb{S}_{d-1}$. For a sequence $a_t$ indexed by $t \in \dom{T}$, the notation $a_\cdot$ denotes the sequence $(a_t)_{t \in \dom{T}}$.

\section{Problem Setup}

\subsection{Model and Learning Algorithm}
\parahead{Model} The proposed model, described in the introduction, is formulated according to \cref{eq:model}. We consider $X \in \mathbb{R}^d$ and $S \in [M]$ where $M \ge 2$. The noise variable, $\xi$, is assumed to follow a Gaussian distribution with zero mean and variance $\sigma^2_\xi > 0$. We define $p_s = \p\cbrace{S=s}$ for all $s \in [M]$, and the optimal regression function is denoted as $f^*(x,s) = \abrace{\beta^*_s, x}$.
 
\parahead{Learning algorithm} \sloppy Given $n$ i.i.d. copies of the tuple $(X, S, Y)$, denoted as $D_n = \cbrace{(X_1, S_1, Y_1), ..., (X_n, S_n, Y_n)}$, the goal is to construct a regression function $f$ that maps $(X, S)$ to $Y$, represented as $\hat{f}_n$. The learner seeks to optimize the accuracy of $\hat{f}_n$ while satisfying a fairness constraint. The definitions of fairness and accuracy are provided in subsequent subsections.

\subsection{Fairness}
\parahead{Demographic parity} We utilize demographic parity~\autocite{Pedreshi2008Discrimination-awareMining} as our fairness criterion. A regressor $f$ adheres to demographic parity if its output distribution is invariant when conditioned on $S=s$.
\begin{definition}\label{def:dp}
A regressor $f$ satisfies (strong) demographic parity if, for all $s,s' \in [M]$, and for all $E \in \sigma(f(X,S))$, $\p\cbrace*{f(X,S) \in E | S = s} = \p\cbrace*{f(X,S) \in E | S = s'}$.
\end{definition}
Denote the set of all regressors fulfilling demographic parity for a given distribution of $X$, parameterized by $\mu_\cdot$, as $\fset{F}_{\mathrm{DP}}(\mu\cdot)$.

\parahead{Fairness consistency} Instead of enforcing strict demographic parity (\cref{def:dp}), which results in the regressor to be a constant function due to the unknown $(X, S)$ distribution, we introduce \emph{fairness consistency}~(\cref{def:consistent}). This concept demands the learned regressor to converge to a fair regressor as the sample size $n$ approaches infinity.

To define ``convergence'', we introduce the \textit{unfairness score} $U(f) \ge 0$, where a lower $U(f)$ indicates a higher fairness level. $U(f) = 0$ if and only if $f$ achieves demographic parity~(\cref{def:dp}). We claim the learned regressor $\hat{f}_n$ converges to an exactly fair regressor when $U(\hat{f}_n) \to 0$ as $n \to \infty$. \begin{definition}\label{def:consistent}
A learning algorithm is $(\alpha, \delta)$-consistently fair for an unfairness score $U$ if there exist constants $n_0 \ge 0$ and $C > 0$, independent of $n$, such that $\p\cbrace{U(\hat{f}_n) > Cn^{-\alpha}} \le \delta$ for all $n \ge n_0$, with randomness arising from the training sample via $\hat{f}_n$.
\end{definition}
Note that an $(\alpha,\delta)$-consistently fair regressor $\hat{f}_n$ exhibits $(\alpha',\delta)$-consistent fairness for any $\alpha' \in (0,\alpha]$.

We adopt a specific unfairness score using the Wasserstein distance. Given two probability measures $\nu$ and $\nu'$ over $\RealSet$, $\Pi(\nu,\nu')$ denotes the set of all coupling measures $\pi$ satisfying $\pi(A\times\RealSet)=\nu(A)$ and $\pi(\RealSet\times A')=\nu'(A')$ for every measurable sets $A,A' \subset \RealSet$. The 2-Wasserstein distance $W_2$ between $\nu$ and $\nu'$ is expressed as $W^2_2(\nu,\nu') = \inf_{\pi \in \Pi(\nu,\nu')}\int\paren{z - z'}^2\pi(dz,dz')$. Our unfairness score is then formulated as:
\begin{align}
    U(f) = \max_{s,s'\in[M]}W_2(\nu_{f|s}, \nu_{f|s'}),
\end{align}
where $\nu_{f|s}$ represents the distribution of $f(X,S)$ conditioned on $S=s$. Prior works, including \autocite{Agarwal2019FairAlgorithms,Chzhen2020FairBarycenters,Chzhen2020FairRecalibration,Chzhen2022ARegression}, have adopted different unfairness scores~(see the appendix for details).

\subsection{Accuracy}
Under the fairness consistency constraint, the learner's objective is to obtain a fair approximation of $f^*$, denoted as $f^*_{\mathrm{DP}}$, which is the closest regressor to $f^*$ within $\mathcal{F}_{\mathrm{DP}}(\mu_\cdot)$ using the $L^2$ distance:
\begin{align}
    f^*_{\mathrm{DP}} = \argmin_{f \in \mathcal{F}_{\mathrm{DP}}(\mu_\cdot)} \Mean\bracket*{\paren*{f(X,S) - f^*(X,S)}^2}.
\end{align}
To evaluate the inaccuracy of a regressor $f$, we compute the mean squared deviation from $f^*_{\mathrm{DP}}$:
\begin{align}
    \mathcal{E}(f;\beta^*_\cdot,\mu_\cdot) = \Mean\bracket*{\paren*{f(X,S) - f^*_{\mathrm{DP}}(X,S)}^2}. \label{eq:deviate}
\end{align}
\textcite{Chzhen2020FairBarycenters,Chzhen2020FairRecalibration} employ similar definitions, differing only in the choice of deviation metric.

This paper aims to identify the minimax optimal regression algorithm, which minimizes \cref{eq:deviate} while maintaining fairness consistency. Given parameters $\alpha > 0$ and $\delta \in (0,1)$, the optimal error is formulated as:
\begin{align}
    \mathcal{E}_{n}(\alpha,\delta) = \inf_{\hat{f}_n:(\alpha,\delta)\text{-consistently fair}}\sup_{\beta^*_\cdot \in \mathcal{B}, \mu_\cdot \in \mathcal{M}}\Mean\bracket{\mathcal{E}(\hat{f}_n;\beta^*_\cdot,\mu_\cdot)}, \label{eq:minimax}
\end{align}
where the infimum is taken over all $(\alpha,\delta)$-consistently fair algorithms, and $\mathcal{B}$ and $\mathcal{M}$ represent the sets of possible $\beta^*_\cdot$ and $\mu_\cdot$, respectively.

\section{Main Results}
Our main result is to establish the minimax optimal error bound, delineating the dependency on the diversity of conditional outcome variances concerning the sensitive attribute. This diversity of the variances is quantified via a parameter $B > 0$, which is defined such that it satisfies:
\begin{align}
\max_s\norm{\beta^*_s} \le B \textand \frac{\paren{\sum_sp_{s}\norm{\beta^*_{s}}}^2}{M}\sum_s\frac{1}{\norm{\beta^*_{s}}^{2}} \le B^2. \label{eq:rist-beta}
\end{align}
The left-hand side of the second inequality in \cref{eq:rist-beta} forms as a product of two factors: the weighted average norms, $\paren{\sum_sp_{s}\norm{\beta^*_{s}}}^2$, and the averaged inverse norms, $\frac{1}{M}\sum_s\frac{1}{\norm{\beta^*_{s}}^{2}}$. As the norms increase, the first factor (weighted average norms) has the propensity to grow, while the second factor (averaged inverse norms) tends to rise when the norms decrease. Maximizing the product of these two elements involves a delicate balancing act: the norms of some groups need to be large, while the norms of other groups need to be smaller. As such, the left-hand side of the second inequality in \cref{eq:rist-beta} can increase when the norms $\norm{\beta^*_s}$ display diversity.

We adopt mild assumptions on $\beta^*\cdot$ and $\mu\cdot$. Let $\dom{B}$ denote the set of $\beta_\cdot$ satisfying \cref{eq:rist-beta}. Assume there exists a finite universal constant $U > 0$ such that $\norm{\mu_s} \le U$ for all $s \in [M]$, leading to $\mathcal{M} = \cbrace{\mu_\cdot \in \RealSet^{d\times M} : \forall s \in [M], \norm{\mu_s} \le U}$. Our analysis relies on these assumptions.

Our main results are as follows:
\begin{theorem}\label{thm:main-err}
Given $\alpha \in (0,\nicefrac{1}{2}]$ and $\delta \in (0,1)$, suppose $M(d-1) > 16$ and $n \ge 12(3d\lor4\ln(M/\delta))/\min_{s\in[M]}p_s$. Then, there exist universal constants $C > 0$ and $c > 0$ such that
\begin{align}
c\frac{\sigma^2_\xi B^2 dM}{n} - o\paren*{\frac{1}{n}} \le \mathcal{E}_{n}(\alpha,\delta) \le C\frac{\sigma^2_\xi B^2 dM \lor \sigma^2_XB^2M \lor B^2U^2}{n} + o\paren*{\frac{1}{n}}.
\end{align}
\end{theorem}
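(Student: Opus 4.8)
\parahead{Identifying the target}
First I would pin down $f^*_{\mathrm{DP}}$. Conditionally on $S=s$, $f^*(X,S)$ is $N(\langle\beta^*_s,\mu_s\rangle,\sigma_X^2\|\beta^*_s\|^2)$, and for any fixed output law $\nu$ on $\mathbb{R}$ the $L^2$-closest function $g$ with $g_\#(\mathrm{law}(X\mid S=s))=\nu$ is the increasing rearrangement of $x\mapsto\langle\beta^*_s,x\rangle$ onto $\nu$, with $L^2$ error $W_2^2(\nu,N(\langle\beta^*_s,\mu_s\rangle,\sigma_X^2\|\beta^*_s\|^2))$. Minimising $\sum_s p_s W_2^2(\nu,\cdot)$ over $\nu$ identifies $\nu$ as the $p_\cdot$-weighted $2$-Wasserstein barycenter of these $M$ Gaussians, which is again Gaussian, equal to $N(m^*,\sigma_X^2\bar\beta^2)$ with $\bar\beta:=\sum_s p_s\|\beta^*_s\|$ and $m^*:=\sum_s p_s\langle\beta^*_s,\mu_s\rangle$; unwinding the (affine) transport maps gives
\[
 f^*_{\mathrm{DP}}(x,s)=\bar\beta\Big\langle \frac{\beta^*_s}{\|\beta^*_s\|},\,x-\mu_s\Big\rangle+m^*.
\]
So the task reduces to estimating the $M$ unit directions $u^*_s:=\beta^*_s/\|\beta^*_s\|$, the scalar $\bar\beta$, the means $\mu_s$, and the intercept $m^*$; I will use throughout that the second inequality of \cref{eq:rist-beta} forces $\|\beta^*_s\|\ge\bar\beta/(\sqrt{M}B)$, so every direction is statistically identifiable.

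\parahead{Upper bound}
Split $D_n$ into two independent halves. On the first, run group-wise ordinary least squares to get $\hat\beta_s$ and set $\hat{\bar\beta}:=\sum_s\hat p_s\|\hat\beta_s\|$ (empirical $\hat p_s$); on the second, set $\hat\mu_s$ to the group sample mean of $X$ and $\hat m:=\sum_s\hat p_s\bar Y_s$. Take $\hat f_n(x,s):=\hat{\bar\beta}\langle\hat\beta_s/\|\hat\beta_s\|,\,x-\hat\mu_s\rangle+\hat m$ and write $\hat a_s:=\hat{\bar\beta}\hat\beta_s/\|\hat\beta_s\|$. By construction, conditionally on $S=s$, $\hat f_n(X,S)$ is Gaussian with variance $\sigma_X^2\hat{\bar\beta}^2$ free of $s$ and mean $\hat m+\langle\hat a_s,\mu_s-\hat\mu_s\rangle$; since $\hat a_s\perp\hat\mu_s$, a Gaussian tail bound plus a union bound over the $M$ groups (using that $n\min_s p_s$ exceeds a constant multiple of $\ln(M/\delta)$) give $U(\hat f_n)\le 2\max_s|\langle\hat a_s,\mu_s-\hat\mu_s\rangle|\le Cn^{-1/2}$ with probability at least $1-\delta$, so $\hat f_n$ is $(\alpha,\delta)$-consistently fair for every $\alpha\le1/2$. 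For accuracy, with $a^*_s:=\bar\beta u^*_s$ and $\mathbb{E}[X-\mu_s\mid S=s]=0$ one gets $\mathbb{E}\,\mathcal{E}(\hat f_n)=\sum_s p_s\big(\sigma_X^2\mathbb{E}\|\hat a_s-a^*_s\|^2+\mathbb{E}\Delta_s^2\big)$ with $\Delta_s=(\hat m-m^*)-\langle\hat a_s,\hat\mu_s-\mu_s\rangle$. On the event that every OLS design is well conditioned one has $\hat{\bar\beta}^2$ within a constant factor of $\bar\beta^2$ and $\|\hat a_s-a^*_s\|^2=O\!\big(\bar\beta^2\|\hat\beta_s-\beta^*_s\|^2/\|\beta^*_s\|^2\big)+O\!\big((\hat{\bar\beta}-\bar\beta)^2\big)$, so the dominant contribution $\sigma_X^2\sum_s p_s\mathbb{E}\|\hat a_s-a^*_s\|^2$ is $O\!\big(\frac{\sigma_\xi^2 d}{n}\,\bar\beta^2\sum_s\|\beta^*_s\|^{-2}\big)=O\!\big(\frac{\sigma_\xi^2 B^2 dM}{n}\big)$ by \cref{eq:rist-beta}; the term $\mathbb{E}\langle\hat a_s,\hat\mu_s-\mu_s\rangle^2=\mathbb{E}\|\hat a_s\|^2\sigma_X^2/n_s$ — where independence across folds collapses a $d$-dimensional estimation error onto one dimension — sums to $O(\sigma_X^2 B^2 M/n)$, $\mathbb{E}(\hat m-m^*)^2=O\!\big((\sigma_X^2 B^2+\sigma_\xi^2+B^2U^2)/n\big)$, and the fluctuations of $\hat{\bar\beta}$ and $\hat p_\cdot$ are lower order. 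The ill-conditioned (or atypically unbalanced) event has probability $o(1/n)$ and the integrand there is $O(1)$, which supplies the $+o(1/n)$.

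\parahead{Lower bound}
Since every consistently fair algorithm is an algorithm, $\mathcal{E}_n(\alpha,\delta)\ge\inf_{\hat f_n}\sup_{\beta^*_\cdot\in\mathcal{B},\,\mu_\cdot\in\mathcal{M}}\mathbb{E}\,\mathcal{E}(\hat f_n)$, so I may drop the fairness constraint. Fix $\mu_s=0$ (hence $m^*=0$) and radii $r_s$, freezing $\|\beta^*_s\|=r_s$ and leaving only the directions $u_s\in\mathbb{S}_{d-1}$, so that $f^*_{\mathrm{DP}}(x,s)=\bar\beta\langle u_s,x\rangle$ with $\bar\beta=\sum_s p_s r_s$ constant. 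Projecting any $\hat f_n(\cdot,s)$ in $L^2(N(0,\sigma_X^2 I))$ onto affine functions of $x$ only decreases the error and yields $\mathbb{E}\,\mathcal{E}(\hat f_n)\ge\sigma_X^2\sum_s p_s\mathbb{E}\|\tilde w_s-\bar\beta u_s\|^2$ for some estimators $\tilde w_s$ of $\bar\beta u_s$. Conditionally on the group labels, group $s$ is an independent linear model $Y_i=r_s\langle u_s,X_i\rangle+\xi_i$ of size $n_s$, so Assouad's lemma over the $M(d-1)$ angular coordinates with per-group perturbation $\gamma_s^2$ of order $\sigma_\xi^2/(n p_s r_s^2\sigma_X^2)$ (admissible once $\gamma_s^2(d-1)<1$, true for $n$ large) gives a lower bound of order $\sigma_X^2\bar\beta^2\sum_s p_s(d-1)\gamma_s^2$, i.e. of order $\frac{\bar\beta^2(d-1)\sigma_\xi^2}{n}\sum_s r_s^{-2}$. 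Choosing the $r_s$ with enough spread — e.g. one group at $r=B$ and the rest near the floor $\bar\beta/(\sqrt{M}B)$ — pushes $\bar\beta^2\sum_s r_s^{-2}$ to a constant multiple of $MB^2$, the largest value \cref{eq:rist-beta} allows, so the bound is of order $\frac{\sigma_\xi^2 B^2 dM}{n}$ (using $d-1\ge d/2$, valid as $M(d-1)>16$ forces $d\ge2$), up to an $o(1/n)$ loss from not saturating the constraint exactly and from shrinking $\gamma_s$ to keep the perturbations valid.

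\parahead{Main obstacle}
The crux, on the lower-bound side, is the simultaneous requirement that the radii $r_s$ (i) saturate the scale-sensitive constraint \cref{eq:rist-beta} so that the factor $B^2$ emerges and (ii) keep each perturbed vector a genuine unit vector, i.e. $\gamma_s^2(d-1)<1$ — a tension that the hypotheses $M(d-1)>16$ and $n\ge 12(3d\lor 4\ln(M/\delta))/\min_s p_s$ are there to resolve, and that produces the $o(1/n)$ slack. On the upper-bound side, the matching subtlety is getting $M$ rather than $dM$ in the $\sigma_X^2 B^2 M/n$ term, which needs the sample split so that $\hat\mu_s$ enters $\hat f_n$ only through the scalar $\langle\hat a_s,\hat\mu_s\rangle$, together with control of the \emph{multiplicative} fluctuation of $\hat{\bar\beta}$ (bounding $\hat{\bar\beta}^2\le B^2$ does not suffice, since \cref{eq:rist-beta} only gives $\sum_s\|\beta^*_s\|^{-2}\le MB^2/\bar\beta^2$).
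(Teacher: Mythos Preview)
Your proposal is correct and follows essentially the same approach as the paper: the same closed form for $f^*_{\mathrm{DP}}$, a plug-in estimator with sample splitting whose fairness is read off the Gaussian Wasserstein formula, and a lower bound via hypercube perturbations of the directions at $\mu_s=0$ with radii $r_s$ chosen to saturate \cref{eq:rist-beta}. The only cosmetic differences are that the paper uses Fano plus Varshamov--Gilbert where you use Assouad, and it computes $\inf_f\mathcal{E}(f;\theta)\lor\mathcal{E}(f;\theta')$ by explicit minimisation where you project onto affines; the paper also spends separate lemmas (hyperellipsoid geometry together with small-ball bounds on $\lambda_{\min}(\tfrac1n X^\top X)$) to control the direction and norm errors of OLS unconditionally, which you fold into the ``well-conditioned event'' clause.
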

\Cref{thm:main-err} illustrates that the optimal error is $\nicefrac{\sigma^2\xi B^2 dM}{n}$ up to a constant factor which may potentially depend on $\sigma_X$ and $U$.
The implications of \cref{thm:main-err} can be summarized as follows: 
\begin{enumerate}
    \item The optimal error for the standard linear regression problem can be denoted as $\nicefrac{d}{n}$~\autocite{Mourtada2022ExactMatrices}. The dependency on $n$ and $d$ is consistent with the standard case, provided $\alpha \in (0, \nicefrac{1}{2}]$.
    \item The term $dM$ denotes the number of unknown parameters in \cref{eq:model}, comprising $\beta^*_1,..,\beta^*_M \in \RealSet^d$ and $\mu_1,...,\mu_M \in \RealSet^d$. This dependency on the number of unknown parameters is a common characteristics observed in statistical estimation problems.
    \item {\bfseries (Direct discrimination)} The minimax error delineated in \cref{thm:main-err} demonstrates a dependency on parameter $B$. As the variation of $\norm{\beta^*_s}$ with respect to $s$ increases, so does the magnitude of $B$. Hence, $B$ serves as a measure of the difficulty in mitigating direct discrimination due to the outcome's variance. This unique quantification of difficulty is absent in standard regression problems and specific to fair regression problems.
    \item {\bfseries (Indirect discrimination)} The lower bound precludes parameters associated with indirect discrimination. It is conceivable that biases arising from indirect discrimination can be reduced without extra costs, provided the dependence of $X$ on $S$ exists only in its mean. Investigating and clarifying this aspect offers a promising direction for future research.
    \item The minimax error is invariant to $\alpha$ and $\delta$, implying that the learning process does not introduce unfair bias for $\alpha \in (0,\nicefrac{1}{2}]$. However, the case for $\alpha \ge \nicefrac{1}{2}$ remains unexplored and poses a significant research challenge.
    \item The gap between the upper and lower bounds regarding $\sigma_X$ and $U$ remains, making narrowing this gap an essential future research direction.
\end{enumerate}
\begin{remark}\label{rmk:err-chz}
Direct comparison of the minimax error between our model and that of \cref{eq:model-chzhen} is not feasible due to the differing $f^*_{\mathrm{DP}}$ across the models. However, the emergence of the fairness-specific term $B$ can be unequivocally identified as a novel contribution in our study. Notably, the minimax error validated by \textcite{Chzhen2022ARegression} is congruent with the minimax optimal error of standard linear regression within their model, a contrast to our findings.
\end{remark}

To prove \cref{thm:main-err}, we initiate by constructing the estimator detailed in \cref{sec:est}. We then prove in \cref{sec:upper} that the estimator satisfies 1) $(\alpha,\delta)$-fairness consistency for $\alpha \in (0,\nicefrac{1}{2}]$, and 2) the error aligns with the upper bound specified in \cref{thm:main-err}. Subsequently, we present a sketch of the proof for the lower bound in \cref{thm:main-err} in \cref{sec:lower}. All omitted proofs can be found in the appendices.

\section{Technical Difficulties in Minimax Optimality Analyses}\label{sec:tech-diff}
In this section, we expound on the challenges arising from the analysis of minimax optimality for our problem. First, we introduce the closed-form expression for the Bayes optimal fair regressor $f^*_{\mathrm{DP}}$. We then outline the technical difficulties encountered during the analysis.

\parahead{Bayes optimal fair regressor under \cref{eq:model}}
\textcite{Chzhen2020FairBarycenters} present a characterization of regression error and the corresponding regressor minimizing the mean squared error under the demographic parity constraint. Building upon the results from \textcite{Chzhen2020FairBarycenters}, we derive the closed-form expression for $f^*_{\mathrm{DP}}$ in the following lemma.
\begin{lemma}\label{lem:bayes-opt-linear}
Given the model in \cref{eq:model}, the Bayes optimal regressor adhering to the demographic parity constraint can be formulated as
\begin{align}
    f^*_{\mathrm{DP}}(x,s) = \overline{\norm{\beta^*_\cdot}}\abrace*{\frac{\beta^*_s}{\norm{\beta^*_s}},x - \mu_s} + \smashoperator{\sum{s'\in[M]}}p_{s'}\abrace*{\beta^*_{s'},\mu_{s'}}, \label{eq:opt-cls}
\end{align}
where $\overline{\norm{\beta^*_\cdot}} = \sum_{s \in [M]}p_s\norm{\beta^*_s}$.
\end{lemma}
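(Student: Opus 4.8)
The plan is to invoke the characterization of the demographic-parity-constrained Bayes regressor due to \textcite{Chzhen2020FairBarycenters}, which identifies the $L^2$-projection of $f^*$ onto $\mathcal{F}_{\mathrm{DP}}(\mu_\cdot)$ with the map that optimally transports each conditional law $\nu_{f^*|s}$ of $f^*(X,S)\mid S=s$ onto their $p_\cdot$-weighted $2$-Wasserstein barycenter $\bar\nu$. Concretely, writing $F_{\nu_s}$ for the CDF of $\nu_{f^*|s}$ and $Q_{\bar\nu}$ for the quantile function of $\bar\nu$, that result yields $f^*_{\mathrm{DP}}(x,s)=Q_{\bar\nu}\paren{F_{\nu_s}\paren{f^*(x,s)}}$, and it applies in our setting because each $\nu_{f^*|s}$ is non-atomic. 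So the proof reduces to three steps: (i) identifying the conditional laws, (ii) computing their barycenter, and (iii) composing the resulting transport maps with $f^*$ and simplifying.

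For step (i): since $X\mid S=s\sim N(\mu_s,\sigma_XI)$ and $f^*(x,s)=\abrace{\beta^*_s,x}$, the law $\nu_{f^*|s}$ is the univariate Gaussian $N\paren{\abrace{\beta^*_s,\mu_s},\,\sigma_X\norm{\beta^*_s}^2}$; it is non-degenerate, hence non-atomic, because $\norm{\beta^*_s}>0$ (forced by finiteness of $\sum_s\norm{\beta^*_s}^{-2}$ in \cref{eq:rist-beta}). For step (ii) I would use that in dimension one the barycenter's quantile function is the $p_\cdot$-average of the quantile functions, $Q_{\bar\nu}=\sum_s p_sQ_{\nu_s}$; since $Q_{N(m,\tau^2)}(t)=m+\tau\,\Phi^{-1}(t)$, this gives $Q_{\bar\nu}(t)=\bar m+\bar\tau\,\Phi^{-1}(t)$ with $\bar m=\sum_sp_s\abrace{\beta^*_s,\mu_s}$ and $\bar\tau=\sqrt{\sigma_X}\sum_sp_s\norm{\beta^*_s}=\sqrt{\sigma_X}\,\overline{\norm{\beta^*_\cdot}}$, i.e.\ $\bar\nu=N(\bar m,\bar\tau^2)$ is the Gaussian whose standard deviation is the $p_\cdot$-weighted average of those of the $\nu_s$. (Alternatively this follows from the closed form $W_2^2\paren{N(m,\tau^2),N(m',\tau'^2)}=(m-m')^2+(\tau-\tau')^2$ and minimizing $\sum_sp_sW_2^2(\nu_s,\bar\nu)$ over $(\bar m,\bar\tau)$.)

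For step (iii): the optimal transport map from $N(m_s,\tau_s^2)$ to $N(\bar m,\bar\tau^2)$ is the increasing affine map $z\mapsto \bar m+\tfrac{\bar\tau}{\tau_s}\paren{z-m_s}$, which indeed equals $Q_{\bar\nu}\circ F_{\nu_s}$. Plugging in $z=f^*(x,s)=\abrace{\beta^*_s,x}$, $m_s=\abrace{\beta^*_s,\mu_s}$, $\tau_s=\sqrt{\sigma_X}\norm{\beta^*_s}$, and $\bar\tau=\sqrt{\sigma_X}\,\overline{\norm{\beta^*_\cdot}}$, the factor $\sqrt{\sigma_X}$ cancels and $\tfrac{\bar\tau}{\tau_s}\abrace{\beta^*_s,x-\mu_s}=\overline{\norm{\beta^*_\cdot}}\abrace{\beta^*_s/\norm{\beta^*_s},x-\mu_s}$, giving exactly \cref{eq:opt-cls}. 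The only genuinely non-routine points are checking that the hypotheses of the \textcite{Chzhen2020FairBarycenters} characterization hold here (non-atomicity of the $\nu_{f^*|s}$, already handled) and justifying the one-dimensional Gaussian barycenter formula; everything else is elementary bookkeeping.
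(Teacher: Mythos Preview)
Your proposal is correct and follows essentially the same route as the paper: both invoke the \textcite{Chzhen2020FairBarycenters} characterization $f^*_{\mathrm{DP}}(x,s)=\bigl(\sum_{s'}p_{s'}F^{-1}_{f^*|s'}\bigr)\circ F_{f^*|s}\bigl(f^*(x,s)\bigr)$, identify each $\nu_{f^*|s}$ as a univariate Gaussian, write its CDF and quantile function explicitly, and substitute. Your barycenter/transport-map phrasing is just a repackaging of the same computation (your identity $Q_{\bar\nu}=\sum_s p_sQ_{\nu_s}$ is exactly the paper's $\sum_{s'}p_{s'}F^{-1}_{f^*|s'}$), so there is no substantive difference.
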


\parahead{Technical difficulty in deriving the upper bound in \cref{thm:main-err}} 
To obtain the upper bound in \cref{thm:main-err}, we first construct an estimator for the regression function in \cref{eq:opt-cls} and analyze its regression error. This entails developing estimators for individual components in \cref{eq:opt-cls}~(e.g., $\overline{\norm{\beta^*_\cdot}}$, $\nicefrac{\beta^*_s}{\norm{\beta^*_s}}$, $\mu_s$, etc.) and substituting them into \cref{eq:opt-cls}. The upper bound on $\mathcal{E}_n(\alpha,\delta)$ is derived by combining estimation error bounds for each component's estimator. However, to our best knowledge, no existing estimators provide bounds for the norm~($\norm{\beta^*_\cdot}$) and direction~($\nicefrac{\beta^*_s}{\norm{\beta^*_s}}$) of regression coefficients. A direct approach involves computing the norm and direction of the OLS estimator, but standard analyses for OLS do not yield bounds on the estimation errors.

The main challenge in deriving the upper bound of \cref{thm:main-err} lies in analyzing the following problem: given $X$ following a non-isotropic Gaussian distribution with mean $\mu$, find upper bounds on $\Mean\bracket{(X/\norm{X} - \mu/\norm{\mu})^2}$ and $\Mean\bracket{(\norm{X}-\norm{\mu})^2}$. Solving this problem provides estimation errors for the norm and direction estimators, as the OLS estimator is an unbiased estimator with noise following the non-isotropic Gaussian distribution. Our key technical contribution is the derivation of these bounds~(\cref{thm:err-norm,thm:err-unit-beta}).

\parahead{Technical difficulty in deriving the lower bound in \cref{thm:main-err}} 
The minimax optimal error characterizes the intrinsic complexity of the regression problem, as no algorithm can surpass this error. In our analysis of the lower bound presented in \cref{thm:main-err}, we demonstrate that the fair regression problem's complexity, under the model \cref{eq:model}, is characterized by the complexity in estimating the direction $\nicefrac{\beta^*_s}{\norm{\beta^*_s}}$. The primary challenge lies in establishing this characterization.

To overcome this challenge, we investigate the geometric structure of the error term $\mathcal{E}_n(f;\beta^*_\cdot,\mu_\cdot)$ concerning the parameters $\beta^*_\cdot$ and $\mu_\cdot$. We then reveal that the geometric structure of $\mathcal{E}_n(f;\beta^*_\cdot,\mu_\cdot)$ is characterized by the geometric structure of the direction $\nicefrac{\beta^*_s}{\norm{\beta^*_s}}$~(\cref{thm:two-point-lower}).

\section{Estimator}\label{sec:est}
In this section, we present a detailed construction of the estimators that attain the minimax error as delineated in \cref{thm:main-err}. Existing theoretical results, such as those found in \textcite{Agarwal2019FairAlgorithms, Chzhen2020FairBarycenters, Chzhen2020FairRecalibration}, are incapable of addressing unbounded non-sensitive features $X$ or unbounded outcomes $Y$, rendering them inapplicable to our problem. Consequently, we have developed a novel estimator accompanied by rigorous analytical techniques.

\parahead{Estimator construction}
In constructing the optimal regressor for model \cref{eq:model}, we leverage the results from \cref{lem:bayes-opt-linear} and employ a plugin estimator. The method involves estimating the components of terms in \cref{eq:opt-cls} and substituting the obtained estimates into the same equation. Concretely, we derive estimators $\widehat{\norm{\beta_\cdot}}$, $\tilde\beta_s$, $\hat\mu_s$, $\hat{p}_{s}$, $\hat\beta'_{s}$, and $\hat\mu'_{s}$, with the following correspondence:
\begin{align}
    \underbracket{\overline{\norm{\beta^*_\cdot}}}_{\widehat{\norm{\beta_\cdot}}}\abrace*{\underbracket{\frac{\beta^*_s}{\norm{\beta^*_s}}}_{\tilde\beta_s},x - \underbracket{\mu_s}_{\hat\mu_s}} + \sum_{s'\in[M]}\underbracket{p_{s'}}_{\hat{p}_{s'}}\abrace*{\underbracket{\beta^*_{s'}}_{\hat\beta'_{s'}},\underbracket{\mu_{s'}}_{\hat\mu'_{s'}}}.
\end{align}

\begin{table}[t]
    \centering
    \caption{Estimator construction. In this table, $\hat\beta_{b,s}$ and $\hat\beta'_{b,s}$ denote OLS estimands obtained from subsets $D_{b,s}$ and $D'_{b,s}$, respectively. ``Sample'' refers to the subset utilized for estimand calculation, while ``Definition'' provides the corresponding estimator's definition. ``Sample`` in $\widehat{\norm{\beta_\cdot}}$ is left empty, as it is derived from $\hat{p}_s$ and $\widehat{\norm{\beta_s}}$.  }
    \begin{tabular}{c|c|l}
    \toprule
        Estimator & Sample & Definition \\
    \midrule
        $\hat{p}_s$ & $n_\cdot$ & $\hat{p}_s = \nicefrac{n_s}{n}$ \\
        $\widehat{\norm{\beta_s}}$ & $D_{1,s}$ & $\widehat{\norm{\beta_s}} = \norm{\hat\beta_{1,s}}$ if $n_s > 18d$, and $\widehat{\norm{\beta_s}}=0$ otherwise \\
        $\widehat{\norm{\beta_\cdot}}$ & - & $\widehat{\norm{\beta_\cdot}} = \sum_{s \in [M]}\hat{p}_s\widehat{\norm{\beta_s}}$ \\
        $\tilde\beta_s$ & $D_{2,s}$ & $\tilde\beta_s = \hat\beta_{2,s}/\norm{\hat\beta_{2,s}}$ if $n_s > 18d$, and $\tilde\beta_s = 0$ otherwise \\
        $\hat\mu_s$ & $D_{3,s}$ & $\hat\mu_s = \frac{1}{n_{3,s}}\sum_{i=1}^{n_{3,s}}X_{3,s,i}$ \\
        $\hat\beta'_s$ & $D'_{1,s}$ & $\hat\beta'_s = \hat\beta'_{1,s}$ if $n_{s} > 12d$, and $\hat\beta'_s = 0$ otherwise \\
        $\hat\mu'_s$ & $D'_{2,s}$ & $\hat\mu'_s = \frac{1}{n'_{2,s}}\sum_{i=1}^{n'_{2,s}}X'_{2,s,i}$ \\
    \bottomrule
    \end{tabular}
    \label{tab:each-estimator}
\end{table}

For technical reasons, we partition the sample to calculate each estimand. Each estimator is assigned a corresponding subset, as shown in \cref{tab:each-estimator}. Under specific conditions, $n_s > 18d$ or $n_s > 12d$, estimators may exhibit altered behavior, primarily as technical considerations for subsequent analyses. We detail the partitioning process as follows. First, we create a histogram of the sensitive attribute $S_i$, denoted as $n_\cdot = (n_1,...,n_M)$, with $n_s = \abs{\cbrace{i \in [n] : S_i = s}}$. Simultaneously, we form group-wise samples $D_s = \cbrace{(X_i,Y_i) : i \in [n], S_i = s}$. For each $s \in [M]$, we partition $D_s$ into $D_{1,s}$, $D_{2,s}$, and $D_{3,s}$, ensuring $\abs{D_{b,s}} \coloneqq n_{b,s} \ge \floor{n_s/3}$ for $b \in [3]$. Using $n_\cdot$, $D_{1,s}$, $D_{2,s}$, and $D_{3,s}$, we estimate $\hat{p}_s$, $\widehat{\norm{\beta_s}}$, $\tilde\beta_s$, and $\hat\mu_s$, respectively. The combination of $\hat{p}_s$ and $\widehat{\norm{\beta_s}}$ yields $\widehat{\norm{\beta_\cdot}}$. Furthermore, we generate a duplicate of $D_s$, denoted as $D'_s$, and partition it into $D'_{1,s}$ and $D'_{2,s}$, satisfying $\abs{D'_{b,s}} \coloneqq n'_{b,s} \ge \floor{n_s/2}$ for $b \in [2]$. We then use $D'_{1,s}$ and $D'_{2,s}$ to estimate $\hat\beta'_s$ and $\hat\mu'_s$. Precise definitions of the estimator construction and subset partitioning can be found in the appendices.

Incorporating the derived estimators, we construct the final regressor as:
\begin{align}
    \hat{f}_n(x,s) = \widehat{\norm{\beta_{\cdot}}}\abrace*{\tilde\beta_s, x - \hat\mu_s} + \sum_{s'\in[M]}\hat{p}_{s'}\abrace*{\hat\beta'_s,\hat\mu'_s}.\label{eq:loose-reg}
\end{align}

\section{Upper Bound Analyses}\label{sec:upper}
In this section, we demonstrate the achievability of the upper bound presented in \cref{thm:main-err} utilizing the estimator delineated in \cref{sec:est}. Initially, we conduct an analysis of the estimator's fairness guarantee, subsequently progressing to an examination of the estimator's mean squared deviation.

\subsection{Analysis of Fairness}
For our fairness guarantee on $\hat{f}_n$, we demonstrate the following theorem.
\begin{theorem}\label{thm:consist-bound}
 If $n \ge 48\ln(M/\delta)/\min_sp_s$, we have for $\delta \in (0,1)$,
 \begin{align}
     \p\cbrace*{\max_{s,s'\in[M]}W_2\paren*{\nu_{\hat{f}_{n}|s},\nu_{\hat{f}_{n}|s'}} > 4B\sigma_X\sigma_X\sqrt{\frac{48\ln(M/\delta)}{\min_{s''\in[M]}np_{s''}}}} \le \delta.
 \end{align}
\end{theorem}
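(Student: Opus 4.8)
The plan is to condition on the training sample, reduce each conditional law $\nu_{\hat{f}_n|s}$ to a one-dimensional Gaussian, and then use that on a high-probability event all of these Gaussians share a common variance, so the Wasserstein distances collapse to gaps between their means; those gaps are in turn controlled by the error of $\hat\mu_s$ \emph{in the direction} $\tilde\beta_s$. Concretely, I would first condition on the sensitive labels $S_1,\dots,S_n$, hence on the histogram $n_\cdot$ and on the (then deterministic) partition into $D_{1,s},D_{2,s},D_{3,s},D'_{1,s},D'_{2,s}$; given this, every hatted quantity in \cref{eq:loose-reg} is a function of the $X_i$'s and $Y_i$'s, which are conditionally independent with $X_i\mid S_i=s\sim N(\mu_s,\sigma_X I)$. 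The second summand of \cref{eq:loose-reg} depends on neither $x$ nor $s$, so it is a common additive constant $C$ that leaves every $W_2(\nu_{\hat{f}_n|s},\nu_{\hat{f}_n|s'})$ unchanged. For a fresh $X\sim N(\mu_s,\sigma_X I)$ independent of $D_n$, the variable $\hat{f}_n(X,s)=\widehat{\norm{\beta_\cdot}}\abrace*{\tilde\beta_s,X-\hat\mu_s}+C$ is an affine image of a Gaussian, so $\nu_{\hat{f}_n|s}=N(a_s,v_s)$ with $a_s=\widehat{\norm{\beta_\cdot}}\abrace*{\tilde\beta_s,\mu_s-\hat\mu_s}+C$ and $v_s=\widehat{\norm{\beta_\cdot}}^2\sigma_X\norm{\tilde\beta_s}^2$; by the closed form of the $2$-Wasserstein distance between univariate Gaussians, $W_2^2(\nu_{\hat{f}_n|s},\nu_{\hat{f}_n|s'})=(a_s-a_{s'})^2+(\sqrt{v_s}-\sqrt{v_{s'}})^2$.

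Next I would introduce the event $\event$ on which $n_s$ exceeds the threshold in \cref{tab:each-estimator} for every $s\in[M]$, so that $\norm{\tilde\beta_s}=1$ for all $s$. On $\event$ the variances $v_s=\widehat{\norm{\beta_\cdot}}^2\sigma_X$ coincide across groups, so the second term in the Wasserstein formula vanishes and $W_2(\nu_{\hat{f}_n|s},\nu_{\hat{f}_n|s'})=\abs{a_s-a_{s'}}$. A Chernoff bound for $n_s\sim\mathrm{Bin}(n,p_s)$ and the sample-size hypothesis give $\p\cbrace*{\event^c}\le\delta/2$ together with $n_{3,s}\ge\lfloor n_s/3\rfloor\ge c_0\,n\min_{s''}p_{s''}$ on $\event$ for a universal constant $c_0>0$.

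It then remains to bound $\abs{a_s-a_{s'}}$ on $\event$. Since $C$ cancels, the triangle inequality gives $\max_{s,s'}W_2(\nu_{\hat{f}_n|s},\nu_{\hat{f}_n|s'})\le 2\,\widehat{\norm{\beta_\cdot}}\,\max_{s''}\abs{\abrace*{\tilde\beta_{s''},\mu_{s''}-\hat\mu_{s''}}}$. The factor $\widehat{\norm{\beta_\cdot}}=\sum_s\hat p_s\norm{\hat\beta_{1,s}}$ is, on a further high-probability event, at most a quantity of order $B$: each $\hat\beta_{1,s}$ is an unbiased estimate of $\beta^*_s$ (whose norm is at most $B$), with Gaussian fluctuation that is small once $n_{1,s}$ exceeds a constant multiple of $d$. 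For the remaining factor I would use the sample splitting of \cref{tab:each-estimator}: $\tilde\beta_s$ is built from $D_{2,s}$ while $\hat\mu_s$ is built from the disjoint subsample $D_{3,s}$, so $\tilde\beta_s$ and $\hat\mu_s$ are conditionally independent and, conditionally on $\tilde\beta_s$, the scalar $\abrace*{\tilde\beta_s,\mu_s-\hat\mu_s}$ is a mean-zero Gaussian with variance $\sigma_X\norm{\tilde\beta_s}^2/n_{3,s}=\sigma_X/n_{3,s}$ on $\event$. A Gaussian tail bound and a union bound over $s''\in[M]$ then give, with probability at least $1-\delta/2$, that $\max_{s''}\abs{\abrace*{\tilde\beta_{s''},\mu_{s''}-\hat\mu_{s''}}}$ is at most a universal constant times $\sqrt{\sigma_X\ln(M/\delta)/(n\min_{s''}p_{s''})}$. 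Multiplying the two bounds, collecting constants, and absorbing the failures of $\event$ and of the two concentration events into $\delta$ yields the claimed inequality.

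The main obstacle is obtaining the \emph{dimension-free} rate $\sqrt{\ln(M/\delta)/(n\min_s p_s)}$: bounding $\abrace*{\tilde\beta_s,\mu_s-\hat\mu_s}$ crudely by $\norm{\tilde\beta_s}\,\norm{\mu_s-\hat\mu_s}$ only yields $\norm{\mu_s-\hat\mu_s}$ of order $\sqrt{d\ln(M/\delta)/(np_s)}$, which carries a spurious $\sqrt d$. Eliminating it is exactly what forces the sample-splitting design of \cref{tab:each-estimator} into the argument, since it is the independence of $\tilde\beta_s$ from $\hat\mu_s$ that turns the relevant fluctuation into a one-dimensional rather than a $d$-dimensional Gaussian; the accompanying conditioning bookkeeping (on the $S_i$'s, then on $\tilde\beta_\cdot$, while keeping track of the binomial event that defines $\event$) is the part that needs care. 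A secondary, more routine point is the uniform control of $\widehat{\norm{\beta_\cdot}}$ and the verification that the sample-size assumption actually triggers every indicator threshold in \cref{tab:each-estimator}, so that the variance term genuinely vanishes.
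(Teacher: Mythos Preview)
Your approach is essentially the paper's: it too computes the conditional laws $\nu_{\hat f_n|s}$ as one-dimensional Gaussians, observes that the variances coincide so only the mean gap $\widehat{\norm{\beta_\cdot}}\bigl(\abrace{\tilde\beta_s,\mu_s-\hat\mu_s}-\abrace{\tilde\beta_{s'},\mu_{s'}-\hat\mu_{s'}}\bigr)$ survives in $W_2$, and then controls each $\abrace{\tilde\beta_s,\mu_s-\hat\mu_s}$ as a one-dimensional Gaussian by conditioning on $\tilde\beta_s$ and $n_\cdot$ and invoking the $D_{2,s}/D_{3,s}$ split, followed by a union bound over $s$ (their \cref{thm:wd-g} and \cref{thm:proj-mean}).

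The one substantive difference is your handling of the factor $\widehat{\norm{\beta_\cdot}}$. The paper simply replaces $\widehat{\norm{\beta_\cdot}}$ by $B$ in the last line of the proof of \cref{thm:wd-g}; since $\widehat{\norm{\beta_\cdot}}=\sum_s\hat p_s\norm{\hat\beta_{1,s}}$ is not almost surely bounded, that step is unjustified as written. Your plan to control $\widehat{\norm{\beta_\cdot}}$ on a separate high-probability event is the honest fix; it costs you an extra concentration argument for $\norm{\hat\beta_{1,s}}$ and changes the leading constant, but that is unavoidable if one wants a rigorous proof. Likewise your explicit event $\event=\{n_s>18d\ \text{for all }s\}$ is what makes the variances genuinely equal; the paper implicitly assumes $\norm{\tilde\beta_s}=1$ throughout. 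So your route is the same decomposition with two bookkeeping points the paper glosses over.
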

By proving \cref{thm:consist-bound}, we can immediately confirm that the estimator adheres to $(\alpha,\delta)$-fairness consistency with $\alpha \in (0,\nicefrac{1}{2}]$.

\subsection{Analysis of Estimation Error}\label{sec:est-err}
In this subsection, we derive an upper bound for the estimation error presented in \cref{thm:main-err}, focusing on the estimator introduced in \cref{sec:est}. To derive the upper bound in \cref{thm:main-err}, we begin by decomposing the mean squared deviation of the estimator in \cref{eq:loose-reg} as follows:
\begin{theorem}\label{thm:decompose}
 For the estimator defined in \cref{eq:loose-reg}, the mean square deviation  from $f^*_{\mathrm{DP}}$ is bounded above by
 \begin{multline}
      \sum_{s \in [M]}p_s\Mean\bracket[\Bigg]{\paren[\Bigg]{\Mean\bracket*{\widehat{\norm{\beta_\cdot}}^2\middle|n_\cdot}^{\nicefrac{1}{2}}\Mean\bracket*{\abrace*{\tilde\beta_s,\mu_s-\hat\mu_{s}}^2\middle|n_\cdot}^{\nicefrac{1}{2}} + \sigma_X\Mean\bracket*{\paren*{\widehat{\norm{\beta_\cdot}} - \overline{\norm*{\beta^*_\cdot}}}^2\middle|n_\cdot}^{\nicefrac{1}{2}} + \\ \sigma_X\overline{\norm{\beta^*_\cdot}}\Mean\bracket*{\norm*{\tilde\beta_s - {\beta^*_s}/{\norm{\beta^*_s}}}^2\middle|n_\cdot}^{\nicefrac{1}{2}} + \Mean\bracket*{\paren*{\sum_{s' \in [M]}\hat{p}_{s'}\abrace*{\hat\beta'_{s'} - \beta^*_{s'},\hat\mu'_{s'}}}^2\middle|n_\cdot}^{\nicefrac{1}{2}} + \\ \Mean\bracket*{\paren*{\sum_{s' \in [M]}\hat{p}_{s'}\abrace*{\beta^*_{s'},\hat\mu'_{s'} - \mu_{s'}}}^2\middle|n_\cdot}^{\nicefrac{1}{2}} + \abs*{\sum_{s' \in [M]}\paren*{\hat{p}_{s'} - p_{s'}}\abrace*{\beta^*_{s'},\mu_{s'}}}}^2}. \label{eq:decompose}
 \end{multline}
\end{theorem}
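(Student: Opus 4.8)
The plan is to expand $\hat f_n(x,s)-f^*_{\mathrm{DP}}(x,s)$ using the closed forms \cref{eq:opt-cls} and \cref{eq:loose-reg}, telescope the difference into six pieces, apply the triangle inequality for the $L^2$ norm taken over the joint randomness of the training sample and an independent test point $(X,S)$, and carry out all the bookkeeping conditionally on the histogram $n_\cdot$, since once $n_\cdot$ is fixed the subsample estimates in \cref{tab:each-estimator} are mutually independent. Writing $u_s=\beta^*_s/\norm{\beta^*_s}$, I would first split $\hat f_n-f^*_{\mathrm{DP}}$ into a slope part $\widehat{\norm{\beta_\cdot}}\abrace{\tilde\beta_s,x-\hat\mu_s}-\overline{\norm{\beta^*_\cdot}}\abrace{u_s,x-\mu_s}$ and an offset part $\sum_{s'}\hat p_{s'}\abrace{\hat\beta'_{s'},\hat\mu'_{s'}}-\sum_{s'}p_{s'}\abrace{\beta^*_{s'},\mu_{s'}}$. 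Adding and subtracting $\widehat{\norm{\beta_\cdot}}\abrace{\tilde\beta_s,x-\mu_s}$ and $\overline{\norm{\beta^*_\cdot}}\abrace{\tilde\beta_s,x-\mu_s}$ turns the slope part into
\[ \widehat{\norm{\beta_\cdot}}\abrace{\tilde\beta_s,\mu_s-\hat\mu_s} + \bigl(\widehat{\norm{\beta_\cdot}}-\overline{\norm{\beta^*_\cdot}}\bigr)\abrace{\tilde\beta_s,x-\mu_s} + \overline{\norm{\beta^*_\cdot}}\abrace{\tilde\beta_s-u_s,x-\mu_s}, \]
and adding and subtracting $\hat p_{s'}\abrace{\beta^*_{s'},\hat\mu'_{s'}}$ and $\hat p_{s'}\abrace{\beta^*_{s'},\mu_{s'}}$ inside the sum turns the offset part into
\[ \sum_{s'}\hat p_{s'}\abrace{\hat\beta'_{s'}-\beta^*_{s'},\hat\mu'_{s'}} + \sum_{s'}\hat p_{s'}\abrace{\beta^*_{s'},\hat\mu'_{s'}-\mu_{s'}} + \sum_{s'}(\hat p_{s'}-p_{s'})\abrace{\beta^*_{s'},\mu_{s'}}. \]
Applying the triangle inequality to $\Mean[(\hat f_n(X,s)-f^*_{\mathrm{DP}}(X,s))^2\mid n_\cdot]^{1/2}$ bounds it by the sum of the six conditional $L^2$ norms of these pieces; squaring, averaging over $n_\cdot$, multiplying by $p_s$ and summing over $s$ then recovers the outer shape of \cref{eq:decompose}, once each conditional $L^2$ norm is identified with the corresponding summand.

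The identification is where the sample splitting of \cref{tab:each-estimator} does the work: conditionally on $n_\cdot$, the quantities $\widehat{\norm{\beta_\cdot}}$, $\tilde\beta_s$, $\hat\mu_s$, $\hat\beta'_{s'}$, $\hat\mu'_{s'}$ are built from disjoint index blocks and are therefore mutually independent, while $\hat p_{s'}=n_{s'}/n$ is $\sigma(n_\cdot)$-measurable. Independence of $\widehat{\norm{\beta_\cdot}}$ from $(\tilde\beta_s,\hat\mu_s)$ factorizes the second moment of the first piece into $\Mean[\widehat{\norm{\beta_\cdot}}^2\mid n_\cdot]\,\Mean[\abrace{\tilde\beta_s,\mu_s-\hat\mu_s}^2\mid n_\cdot]$, i.e.\ the first summand of \cref{eq:decompose}. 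For the second and third pieces, the test point is Gaussian about $\mu_s$ with $\Mean[\abrace{v,X-\mu_s}^2\mid S=s]=\sigma_X^2\norm{v}^2$ (cf.\ \cref{eq:model}) and is independent of the sample, so conditioning additionally on the sample replaces $\abrace{\tilde\beta_s,X-\mu_s}^2$ by $\sigma_X^2\norm{\tilde\beta_s}^2\le\sigma_X^2$ and $\abrace{\tilde\beta_s-u_s,X-\mu_s}^2$ by $\sigma_X^2\norm{\tilde\beta_s-u_s}^2$, producing the $\sigma_X$ and the $\sigma_X\overline{\norm{\beta^*_\cdot}}$ prefactors (the latter deterministic and pulled straight out). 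The fourth and fifth pieces do not involve the test point and are already in the stated form, and the sixth piece is $\sigma(n_\cdot)$-measurable so its conditional $L^2$ norm equals its absolute value, the last summand.

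I expect the main obstacle to be selecting this exact telescoping rather than one of the superficially equivalent rearrangements: the norm error $\widehat{\norm{\beta_\cdot}}-\overline{\norm{\beta^*_\cdot}}$ must be paired against the \emph{unit} direction $\tilde\beta_s$ (so that integrating out the test point costs only $\sigma_X^2$ and not $\widehat{\norm{\beta_\cdot}}^2\sigma_X^2$), while the direction error $\tilde\beta_s-u_s$ must be paired against the \emph{population} scale $\overline{\norm{\beta^*_\cdot}}$ (so that the estimated scale does not leak into that term); a different pairing simply fails to match \cref{eq:decompose}. A secondary nuisance is that under the low-count events ($n_s\le 18d$ for $\widehat{\norm{\beta_s}}$ and $\tilde\beta_s$, and $n_s\le 12d$ for $\hat\beta'_s$) the corresponding estimates are set to zero; this is harmless here, since the coefficients appearing in the telescoping then merely vanish or remain bounded, but it has to be tracked to be sure every conditional second moment is finite and the factorizations above remain valid. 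Beyond these points the argument is just the triangle inequality, the tower property, and Fubini.
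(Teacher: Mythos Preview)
Your proposal is correct and mirrors the paper's proof essentially step for step: the same six-term telescoping of $\hat f_n-f^*_{\mathrm{DP}}$, the Cauchy--Schwarz/triangle inequality in $L^2$ conditionally on $S=s$ and $n_\cdot$, and then the same case-by-case identification of each conditional second moment using sample-splitting independence, the Gaussianity of $X-\mu_s$, and the $\sigma(n_\cdot)$-measurability of $\hat p_{s'}$. Your handling of the second piece via $\norm{\tilde\beta_s}^2\le 1$ is in fact slightly more careful than the paper's, which asserts $\tilde\beta_s\in\mathbb{S}_{d-1}$ almost surely even though $\tilde\beta_s=0$ on the low-count event.
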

In \cref{eq:decompose}, the terms correspond to the estimation errors of $\hat\mu_s$, $\widehat{\norm{\beta_\cdot}}$, $\tilde\beta_s$, $\hat\beta'_s$, $\hat\mu'_s$, and $\hat{p}_s$, respectively. Standard techniques for the OLS estimator and empirical average yield upper bounds for the first, fourth, fifth, and sixth terms. Nevertheless, the second and third terms in \cref{eq:decompose} involve non-linear transformations of the OLS estimator (i.e., taking the norm or dividing by the norm), complicating their error analysis. This section's primary technical contributions involve establishing tight upper bounds for the second and third terms in \cref{eq:decompose}.

\parahead{Estimation error of norm and direction of $\beta^*_s$}
Consider $X_1,...,X_n \iidsim N(\mu,\sigma^2_XI)$, $\beta^* \in \RealSet^d$ with $\norm{\beta^*} \le B$ for some $B > 0$, and $\xi_1,...,\xi_n \iidsim N(0,\sigma^2_\xi)$. Define $Y_i = \abrace{\beta^*,X_i} + \xi_i$. The OLS estimator of $\beta^*$ is given by $\hat\beta = \paren{\frac{1}{n}X^{\top}X}^{-1}\paren{\frac{1}{n}X^{\top}Y}$, where $X = (X_1 \cdots X_n)^{\top}$ and $Y=(Y_1 \cdots Y_n)^{\top}$. The direction estimator is $\hat\beta/\norm{\hat\beta}$, while the norm estimator is $\norm{\hat\beta}$.
 
We present the estimation errors for direction and norm in \cref{thm:err-unit-beta,thm:err-norm}:
\begin{theorem}\label{thm:err-unit-beta}
 For $n > 6d$, we have
 \begin{align}
     \Mean\bracket*{\norm*{\frac{\hat\beta}{\norm{\hat\beta}} - \frac{\beta^*}{\norm{\beta^*}}}^2} \le \frac{84e^{10}\sigma^2_\xi d}{\sigma^2_X\norm{\beta^*}^2n}\paren*{1 + \frac{6}{n-6}}.
 \end{align}
\end{theorem}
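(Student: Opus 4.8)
The plan is to exploit that the OLS estimator is, conditionally on the design $X$, an unbiased Gaussian perturbation of $\beta^{*}$, and then to control separately the purely geometric cost of normalising a perturbed vector and the size of the perturbation. Concretely: from $Y = X\beta^{*} + \xi$ with $\xi$ independent of $X$, the OLS identity gives $\hat\beta - \beta^{*} = (X^{\top}X)^{-1}X^{\top}\xi$, so conditionally on $X$ the error $\hat\beta - \beta^{*}$ is centred Gaussian with covariance $\sigma^{2}_{\xi}(X^{\top}X)^{-1}$; in particular $\Mean\bracket*{\norm{\hat\beta-\beta^{*}}^{2}\middle| X} = \sigma^{2}_{\xi}\operatorname{tr}((X^{\top}X)^{-1})$, which isolates the design randomness from the noise randomness.

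The second ingredient is an elementary deterministic bound: for all nonzero $u,v\in\RealSet^{d}$, $\norm*{u/\norm{u} - v/\norm{v}}^{2} \le 16\,\norm{u-v}^{2}/\norm{v}^{2}$. Indeed, if $\norm{u-v}\le\norm{v}/2$ the angle $\theta$ between $u$ and $v$ is acute, whence $\norm*{u/\norm{u}-v/\norm{v}}^{2} = 2(1-\cos\theta) \le 2\sin^{2}\theta \le 2\norm{u-v}^{2}/\norm{u}^{2} \le 8\norm{u-v}^{2}/\norm{v}^{2}$ (using $\norm{u}\ge\norm{v}/2$); and if $\norm{u-v}>\norm{v}/2$ the left-hand side is at most $4$ because both arguments are unit vectors, which is below $16\norm{u-v}^{2}/\norm{v}^{2}$. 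Applying this with $u=\hat\beta$, $v=\beta^{*}$ (both nonzero almost surely) and conditioning on $X$ gives $\Mean\bracket*{\norm*{\hat\beta/\norm{\hat\beta} - \beta^{*}/\norm{\beta^{*}}}^{2}\middle| X} \le 16\,\sigma^{2}_{\xi}\operatorname{tr}((X^{\top}X)^{-1})/\norm{\beta^{*}}^{2}$. The delicate point here is that normalisation is ill-conditioned exactly when $\hat\beta$ is near $0$, i.e.\ when $\norm{\hat\beta-\beta^{*}}$ is of order $\norm{\beta^{*}}$; but that is precisely the regime in which the asserted bound already exceeds its trivial upper bound $4$, so being crude there is free.

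It then remains to bound $\Mean\bracket*{\operatorname{tr}((X^{\top}X)^{-1})}$ despite $X$ having nonzero mean. Write $X = \mathbf{1}\mu^{\top} + Z$ with $Z$ having i.i.d.\ $N(0,\sigma^{2}_{X})$ entries and set $P = I_{n} - \tfrac{1}{n}\mathbf{1}\mathbf{1}^{\top}$. Completing the square in $v^{\top}X^{\top}Xv = \norm{Xv}^{2}$ with respect to the scalar $\mu^{\top}v$ shows $X^{\top}X \succeq Z^{\top}PZ$, hence $(X^{\top}X)^{-1}\preceq(Z^{\top}PZ)^{-1}$ and $\operatorname{tr}((X^{\top}X)^{-1})\le\operatorname{tr}((Z^{\top}PZ)^{-1})$. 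Since $P$ is an orthogonal projection of rank $n-1$, the matrix $Z^{\top}PZ$ is a central Wishart with $n-1$ degrees of freedom and scale $\sigma^{2}_{X}I_{d}$, so the inverse-Wishart mean (valid because $n-1\ge 6d>d+1$) gives $\Mean\bracket*{\operatorname{tr}((Z^{\top}PZ)^{-1})} = d/(\sigma^{2}_{X}(n-d-2))$.

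Chaining the three steps yields $\Mean\bracket*{\norm*{\hat\beta/\norm{\hat\beta}-\beta^{*}/\norm{\beta^{*}}}^{2}} \le 16\,\sigma^{2}_{\xi}d/(\sigma^{2}_{X}\norm{\beta^{*}}^{2}(n-d-2))$, and since $n>6d$ forces $n-d-2$ to exceed an absolute constant multiple of $n$, this implies the stated inequality with considerable room to spare; the large constant $84e^{10}$ in the statement presumably reflects a cruder route (for instance a high-probability lower bound on $\lambda_{\min}(X^{\top}X)$ with the exceptional event absorbed through the trivial bound $4$) rather than the one above. I expect the only genuinely delicate step to be the second one: a naive argument would route through $\Mean\bracket*{\norm{\hat\beta}^{-2}}$, which is finite only for $d\ge 3$ and unpleasant to estimate, and the observation that the ill-conditioned event is exactly where the claim is weakest is what keeps the whole argument elementary, leaving only the OLS identity and the inverse-Wishart mean to finish.
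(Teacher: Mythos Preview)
Your argument is correct and strictly sharper than the paper's. The three ingredients—the deterministic inequality $\norm{u/\norm{u}-v/\norm{v}}^2\le 16\norm{u-v}^2/\norm{v}^2$, the conditional OLS covariance $\sigma_\xi^2(X^\top X)^{-1}$, and the centering inequality $X^\top X\succeq Z^\top P Z$ combined with the inverse-Wishart mean—are all valid as written, and together yield a constant of order $32$ in place of $84e^{10}$.

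The paper takes a different route. Writing $\norm{\hat\beta/\norm{\hat\beta}-\beta^*/\norm{\beta^*}}^2=2(1-A_n)$, it bounds $A_n$ from below via a hyperellipsoid lemma (if the noise confines $\hat\beta$ to an ellipsoid of radius $r$ around $\beta^*$, then the worst-case angle is controlled by $r\lambda_{\max}(\Sigma_n^{-1})/\norm{\beta^*}^2$), integrates over the $\chi^2$-distributed $r$, and finally bounds $\Mean[\lambda_{\max}(\Sigma_n^{-1})]$ by verifying the small-ball condition of Mourtada for the non-centred Gaussian design and integrating the resulting tail bound on $\lambda_{\min}(\Sigma_n)$. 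This is where the $21e^{10}$ constant enters. Your centering trick bypasses this entirely by reducing to an exact inverse-Wishart expectation; the paper's machinery would have the advantage of covering designs that are merely small-ball rather than Gaussian, but under the Gaussian assumption actually made in the statement your approach is both more elementary and quantitatively much tighter. Your remark at the end about what the large constant ``presumably reflects'' is exactly right.
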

\begin{theorem}\label{thm:err-norm}
    For $n > 6d$, we have
    \begin{align}
        \Mean\bracket*{\paren*{\norm{\hat\beta} - \norm{\beta^*}}^2} \le \frac{21e^{10}\sigma^2_\xi d}{\sigma^2_Xn}\paren*{1 + \frac{6}{n-6}}.
    \end{align}
\end{theorem}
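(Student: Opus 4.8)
\parahead{Step 1: reduce to the squared OLS error} The plan is to bound the norm error by the full squared estimation error of the OLS estimator, then reduce that to the expected trace of the inverse Gram matrix, and finally control this trace through a centered Wishart matrix; the whole difficulty will turn out to sit in the near-zero behavior of $\lambda_{\min}(X^{\top}X)$. First I would apply the reverse triangle inequality $\bigl|\norm{\hat\beta}-\norm{\beta^*}\bigr|\le\norm{\hat\beta-\beta^*}$, which gives $\Mean\bracket{\paren{\norm{\hat\beta}-\norm{\beta^*}}^2}\le\Mean\bracket{\norm{\hat\beta-\beta^*}^2}$. This is the exact analogue of the reduction behind \cref{thm:err-unit-beta}, which instead uses $\norm{\hat\beta/\norm{\hat\beta}-\beta^*/\norm{\beta^*}}\le2\norm{\hat\beta-\beta^*}/\norm{\beta^*}$ (valid for any nonzero vectors); this shared origin is precisely why the two statements coincide up to the factor $4/\norm{\beta^*}^2$ (note $84=4\cdot21$). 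The step discards the nonnegative quantity $\norm{\hat\beta}\norm{\beta^*}\norm{\hat\beta/\norm{\hat\beta}-\beta^*/\norm{\beta^*}}^2$, so the $d$ in the stated bound is a (harmless) overcount relative to the true rate.

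\parahead{Step 2: evaluate the OLS error conditionally on the design} Because $Y=X\beta^*+\xi$ with $\xi\sim N(0,\sigma^2_\xi I_n)$ independent of $X$, the OLS residual is $\hat\beta-\beta^*=(X^{\top}X)^{-1}X^{\top}\xi$, which conditionally on $X$ is centered Gaussian with covariance $\sigma^2_\xi(X^{\top}X)^{-1}$. Hence $\Mean[\,\norm{\hat\beta-\beta^*}^2\mid X\,]=\sigma^2_\xi\operatorname{tr}((X^{\top}X)^{-1})$, and by the tower rule it suffices to show $\Mean\bracket{\operatorname{tr}((X^{\top}X)^{-1})}\le\frac{21e^{10}d}{\sigma^2_X n}\paren{1+\frac{6}{n-6}}$. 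This target does not involve $\mu$ or $\beta^*$, consistent with the theorem, whose bound depends on neither; the estimate will therefore hold uniformly over all $\mu$ and all $\beta^*$.

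\parahead{Step 3: reduction to a centered Wishart matrix} The design is Gaussian but has nonzero mean, so I would first strip off the mean: with $\bar X=\frac1n\sum_i X_i$, one has $X^{\top}X=\sum_i(X_i-\bar X)(X_i-\bar X)^{\top}+n\bar X\bar X^{\top}\succeq\sum_i(X_i-\bar X)(X_i-\bar X)^{\top}=\sigma^2_X W$, where $W$ follows the Wishart law with $n-1$ degrees of freedom and identity scale, since $X_i-\bar X=\sigma_X(Z_i-\bar Z)$ with $Z_i\iidsim N(0,I_d)$. Using $A\succeq B\succ0\Rightarrow\operatorname{tr}(A^{-1})\le\operatorname{tr}(B^{-1})$ followed by $\operatorname{tr}(W^{-1})\le d/\lambda_{\min}(W)$ gives $\Mean\bracket{\operatorname{tr}((X^{\top}X)^{-1})}\le(d/\sigma^2_X)\,\Mean\bracket{\lambda_{\min}(W)^{-1}}$. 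The hard part, which I expect to be the main obstacle, is showing $\Mean\bracket{\lambda_{\min}(W)^{-1}}=O(1/n)$: I would split on the event $\{\lambda_{\min}(W)\ge c(n-1)\}$ for a small absolute constant $c$, which holds with probability at least $1-e^{-c'n}$ by a lower-tail estimate for the smallest Wishart eigenvalue (the Marchenko--Pastur edge $(\sqrt{n-1}-\sqrt d)^2$, kept of order $n$ by the hypothesis $n>6d$); this yields the main term $d/(c\sigma^2_X n)$, while on the complementary rare event I would bound the contribution by Cauchy--Schwarz against the second inverse moment $\Mean\bracket{\lambda_{\min}(W)^{-2}}$, which is finite precisely because $n>6d$ forces $n-1-d$ to be large, making this piece genuinely lower order. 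The inflated constant $21e^{10}$ and the correction $1+\tfrac{6}{n-6}$ are simply the residue of the Gaussian-concentration constants and of the $n>6d$ slack in this split. A cleaner alternative I would mention is to bypass the concentration argument and invoke the exact inverse-Wishart identity $\Mean\bracket{\operatorname{tr}(W^{-1})}=d/(n-d-2)$ (valid since $n>6d>d+2$), which already lies comfortably inside the claimed bound; in either route, the entire content of \cref{thm:err-norm} is the control of $\lambda_{\min}(X^{\top}X)$ near zero.
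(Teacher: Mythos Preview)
Your argument is correct and is in fact more elementary than the paper's. Both proofs open identically with the reverse triangle inequality to reduce to $\Mean\bracket{\norm{\hat\beta-\beta^*}^2}$, and both then condition on the design; but from there the paper takes a more circuitous path. It writes $r=(\hat\beta-\beta^*)^{\top}\tfrac{n}{\sigma^2_\xi}\Sigma_n(\hat\beta-\beta^*)$, invokes the ellipsoid inclusion $E(r,c,A)\subseteq E(r\lambda_{\max}(A^{-1}),c,I)$ of \cref{lem:ell-sph} to get the pointwise bound $\norm{\hat\beta-\beta^*}^2\le\tfrac{\sigma^2_\xi r}{n}\lambda_{\max}(\Sigma_n^{-1})$, uses that $r\mid X\sim\chi^2_d$ to obtain $\Mean\bracket{\norm{\hat\beta-\beta^*}^2\mid X}\le\tfrac{\sigma^2_\xi d}{n}\lambda_{\max}(\Sigma_n^{-1})$, and then controls $\Mean\bracket{\lambda_{\max}(\Sigma_n^{-1})}$ for the \emph{non-centered} Gaussian design via the small-ball machinery of \textcite{Mourtada2022ExactMatrices} (\cref{lem:min-eigvalue-sig,lem:exp-max-eig-inv}); this is exactly where the constant $21e^{10}$ and the correction $1+\tfrac{6}{n-6}$ originate. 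Your route instead computes $\Mean\bracket{\norm{\hat\beta-\beta^*}^2\mid X}=\sigma^2_\xi\operatorname{tr}((X^{\top}X)^{-1})$ exactly (which is already sharper than the paper's $d\lambda_{\max}$ bound), strips off the mean by $X^{\top}X\succeq\sigma_X^2 W$ with $W$ Wishart$(n-1,I_d)$, and closes with the explicit inverse-Wishart moment $\Mean\bracket{\operatorname{tr}(W^{-1})}=d/(n-d-2)$. The upshot is that your bound is dramatically tighter in constants and needs no small-ball theory; the paper's approach, on the other hand, is the one that generalizes beyond Gaussian designs, and its ellipsoid framework is reused in the companion \cref{thm:err-unit-beta} (proved there via \cref{lem:ell-angle} rather than the $2\norm{\hat\beta-\beta^*}/\norm{\beta^*}$ inequality you mention, though your observation that $84=4\cdot21$ reflects a shared reduction is still apt).
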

The direction's estimation error~(\cref{thm:err-unit-beta}) is $O(\nicefrac{\sigma^2_\xi d}{\sigma^2_X\norm{\beta^*}^2n})$, while the norm's estimation error~(\cref{thm:err-norm}) is $O(\nicefrac{B^2\sigma^2_\xi d}{\sigma^2_Xn})$. Integrating \cref{thm:decompose,thm:err-unit-beta,thm:err-norm} yields the $\nicefrac{\sigma^2_\xi B^2dM}{n}$ term in the upper bound in \cref{thm:main-err}. The remaining part, $\nicefrac{U\sigma^2_\xi}{n}$, arises from the estimation error of $\hat\beta'_s$~(the third term in \cref{eq:decompose}), dominating other terms in \cref{eq:decompose}.

\section{Lower Bound Analyses}\label{sec:lower}
In this section, we provide a proof sketch for the lower bound, outlined in \cref{thm:main-err}. To facilitate a clear and concise presentation of the proof sketch, we introduce several notations. Let $\theta$ denote the tuple of distribution parameters $(\beta_\cdot, \mu_\cdot)$, and let $\Theta$ represent the set of all such parameters, defined as $\Theta = \dom{B}\times\dom{M}$. We use $\p_\theta$ and $\Mean_\theta$ to denote the probability and expectation operators, respectively, given $X \sim N(\mu_S, \sigma^2_X I)$ and $Y = (\beta_S, X) + \xi$, where $\xi \sim N(0, \sigma^2_\xi)$. We adopt the shorthand $\mathcal{E}(f; \theta) = \mathcal{E}(f; \beta_\cdot, \mu_\cdot)$ for $\theta = \paren{ \beta_\cdot, \mu_\cdot }$. Moreover, we define $f_\theta = \argmin_{f \in \dom{F}_{\mathrm{DP}}} R(f; \beta_\cdot, \mu_\cdot)$ for $\theta = ( \beta_\cdot, \mu_\cdot )$. For two probability distributions $\pi$ and $\pi'$, the Kullback-Leibler~(KL) divergence is denoted as $\KL(\pi, \pi') = \int \ln (\frac{d\pi}{d\pi'}(z)) \pi(dz)$. Finally, we denote the set of all $L^2$ integrable functions $f : \RealSet^d \times [M] \to \RealSet$ as $\dom{L}^2$.

By utilizing Fano's inequality, we establish a lower bound for the minimax error as presented in \cref{thm:main-err}. Due to the invariance of the distribution of $S_1,...,S_n$ under parameter alterations $\theta$, Fano's inequality can be applied after conditioning on $S_1,...,S_n$, or equivalently, $n_\cdot$. Consequently, we derive the following theorem:
\begin{theorem}\label{thm:applied-fano}
 Let $\hat\Theta \subseteq \Theta$ be a finite set of the parameters such that there exists $\epsilon > 0$ such that for any $\theta,\theta' \in \hat\Theta$, $\inf_f\mathcal{E}(f;\theta)\lor\mathcal{E}(f;\theta') \ge \epsilon$, where $\hat\Theta$ and $\epsilon$ is possibly dependent on $n_\cdot$. Let $\abs{\hat\Theta} = K$. Then, for arbitrary $\alpha > 0$ and $\delta \in (0,1)$, we have
 \begin{align}
     \mathcal{E}_n\paren*{\alpha,\delta} \ge \Mean\bracket*{\epsilon\paren*{1 - \frac{\inf_\pi\frac{1}{K}\sum_{\theta \in \hat\Theta}\KL\paren*{\pi_{\theta|n_\cdot},\pi} + \ln(2)}{\ln\paren{K}}}},
 \end{align}
 where $\pi_{\theta|n_\cdot}$ denotes the distribution of $D_n$ conditioned on $n_\cdot$ with parameter $\theta$, and the expectation is taken over $n_\cdot$.
\end{theorem}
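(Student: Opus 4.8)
The plan is to derive \Cref{thm:applied-fano} as a conditional version of the standard Fano-based minimax lower bound, with two twists: the infimum is taken over $(\alpha,\delta)$-consistently fair algorithms rather than all algorithms, and everything is conditioned on the ancillary statistic $n_\cdot$. First I would fix a realization of $n_\cdot$ and work conditionally. Conditioned on $n_\cdot$, the data $D_n$ has law $\pi_{\theta\mid n_\cdot}$, and for any estimator $\hat f_n$ the quantity $\mathcal{E}(\hat f_n;\theta)$ is a function of $D_n$ (and external randomness). Given the finite packing set $\hat\Theta$ with the separation property $\inf_f \mathcal{E}(f;\theta)\lor\mathcal{E}(f;\theta')\ge\epsilon$, I would reduce bounding the error to a hypothesis-testing problem: define a test $\psi$ that outputs the $\theta\in\hat\Theta$ whose associated $f_\theta$ is ``closest'' to $\hat f_n$ in the relevant sense, argue that on the event $\{\mathcal{E}(\hat f_n;\theta)<\epsilon\}$ the test is correct (this is where the $\epsilon$-separation is used, via a triangle-type inequality for the deviation functional), and then invoke the conditional Fano inequality to lower bound the worst-case probability of test error by $1-\frac{I + \ln 2}{\ln K}$, where $I$ is the mutual information between the uniform index on $\hat\Theta$ and $D_n$ (conditionally on $n_\cdot$), which is upper bounded by $\inf_\pi \frac{1}{K}\sum_\theta \KL(\pi_{\theta\mid n_\cdot},\pi)$ by the standard variational (centering) bound on mutual information.

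The step that needs care is converting a statement about $\p\{\mathcal{E}(\hat f_n;\theta)\ge\epsilon\}$ into a bound on $\sup_\theta \Mean[\mathcal{E}(\hat f_n;\theta)]$, because the theorem as stated gives a bound on the expected error, not just a high-probability one. Here I would combine two ingredients. From Fano, for at least one $\theta\in\hat\Theta$ we have $\p_\theta\{\psi\neq\theta \mid n_\cdot\}\ge 1-\frac{I+\ln2}{\ln K}$, hence $\p_\theta\{\mathcal{E}(\hat f_n;\theta)\ge\epsilon \mid n_\cdot\}\ge 1-\frac{I+\ln2}{\ln K}$ — but this must be reconciled with the fact that the infimum in $\mathcal{E}_n(\alpha,\delta)$ ranges only over $(\alpha,\delta)$-consistently fair algorithms. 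So I would argue that it suffices to prove the bound for every such algorithm: fix an arbitrary $(\alpha,\delta)$-consistently fair $\hat f_n$, apply the above to it, take expectations over $n_\cdot$, and use Markov's inequality in the form $\Mean[\mathcal{E}(\hat f_n;\theta)]\ge \epsilon\,\p\{\mathcal{E}(\hat f_n;\theta)\ge\epsilon\}$ together with $\sup_\theta \Mean[\mathcal{E}]\ge \frac{1}{K}\sum_\theta \Mean[\mathcal{E}]$ to pass from the worst $\theta$ to the average. Pulling the conditional-on-$n_\cdot$ Fano bound inside the expectation over $n_\cdot$ then yields exactly the claimed form $\Mean[\epsilon(1-\frac{I+\ln2}{\ln K})]$, with $\epsilon$ and $K$ (and $\hat\Theta$, $I$) allowed to depend on $n_\cdot$.

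The main obstacle I anticipate is the reduction to testing — specifically, constructing the test $\psi$ from $\hat f_n$ and certifying its correctness on $\{\mathcal{E}(\hat f_n;\theta)<\epsilon\}$. The subtlety is that $\mathcal{E}(f;\theta) = \Mean[(f(X,S)-f_\theta(X,S))^2]$ is the squared $L^2$ distance from $f$ to the $\theta$-specific \emph{fair} target $f_\theta = f^*_{\mathrm{DP}}$, and this $L^2$ norm itself depends on $\theta$ through the distribution of $(X,S)$ (since $\mu_\cdot$ varies with $\theta$). A clean way around this is to choose the packing $\hat\Theta$ so that all members share the same marginal of $(X,S)$ — e.g. perturb only $\beta_\cdot$ while holding $\mu_\cdot$ fixed, consistent with the stated proof strategy that the complexity is driven by estimating the direction $\beta^*_s/\norm{\beta^*_s}$ — so that $\mathcal{E}(\cdot;\theta)$ is a genuine (fixed) squared norm and the triangle inequality applies directly: if $\mathcal{E}(\hat f_n;\theta)<\epsilon$ and $\mathcal{E}(\hat f_n;\theta')<\epsilon$ were both to hold, then $\inf_f \mathcal{E}(f;\theta)\lor\mathcal{E}(f;\theta')$ could not be $\ge\epsilon$ unless... — actually the hypothesis is phrased via $\inf_f\mathcal{E}(f;\theta)\lor\mathcal{E}(f;\theta')\ge\epsilon$, which is precisely the right separation condition making $\psi(\hat f_n):=\argmin_{\theta\in\hat\Theta}\mathcal{E}(\hat f_n;\theta)$ a valid test, so the test-correctness step follows from the definition of $\epsilon$ and I need only verify measurability and that $\psi$ depends on $D_n$ only through $\hat f_n$. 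Everything else — the conditional Fano inequality, the variational KL bound on mutual information, and the Markov/averaging passage to expected error — is standard and I would cite it rather than reprove it.
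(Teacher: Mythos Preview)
Your proposal is correct and follows essentially the same route as the paper: condition on the ancillary $n_\cdot$, pass from the supremum over $\Theta$ to the average over $\hat\Theta$, use Markov to convert expected error into $\p_\theta\{\mathcal{E}(\hat f_n;\theta)\ge\epsilon\mid n_\cdot\}$, reduce to a $K$-ary testing problem via the separation hypothesis (the paper phrases this as constructing a partition $\{\mathcal F_\theta\}$ rather than an explicit $\psi$, but these are equivalent), and finish with Fano plus the variational KL bound on mutual information. Your digression about the $\theta$-dependence of the $L^2$ norm is unnecessary here since the separation condition is a \emph{hypothesis} of the theorem, as you eventually recognize.
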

As demonstrated in \cref{thm:applied-fano}, the lower bound for the minimax error can be obtained by constructing $\hat\Theta$ such that: 1) $\inf_f\mathcal{E}(f;\theta)\lor\mathcal{E}(f;\theta') \ge \epsilon$ for any $\theta,\theta' \in \hat\Theta$, and 2) $\inf_\pi\frac{1}{K}\sum_{\theta \in \hat\Theta}\KL\paren*{\pi_{\theta|n_\cdot},\pi} \le \ln(K/4)/2$. With the construction of such a $\hat\Theta$, a lower bound of $\Mean\bracket{\frac{\epsilon}{2}}$ is attained.

We present a theorem that establishes a tight lower bound on $\inf_f\mathcal{E}(f;\theta)\lor\mathcal{E}(f;\theta')$.
\begin{theorem}\label{thm:two-point-lower}
    Let $\theta$ and $\theta'$ be the parameters of the distributions such that $ \frac{1}{2\sigma^2_X}\norm*{\mu_s - \mu'_s}^2 \coloneqq d_s < 1$ for all $s \in [M]$. Then, we have
    \begin{align}
        \inf_{f \in \dom{L}^2}\mathcal{E}(f;\theta)\lor\mathcal{E}(f;\theta') \ge \sum_{s \in [M]}p_s\frac{\sigma^2_Xe^{-d_s}}{4}\norm*{\frac{\overline{\norm{\beta_\cdot}}\beta_s}{\norm{\beta_s}} - \frac{\overline{\norm{\beta'_\cdot}}\beta'_s}{\norm{\beta'_s}}}^2\paren*{1+\frac{d_s}{2}}^{1+\frac{d}{2}}.
    \end{align}
\end{theorem}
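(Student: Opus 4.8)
The plan is to lower-bound $\inf_{f\in\dom{L}^2}\mathcal{E}(f;\theta)\lor\mathcal{E}(f;\theta')$ by reducing it to a distance between the two Bayes-optimal fair regressors $f^*_{\mathrm{DP}}$ under $\theta$ and under $\theta'$, and then to bound that distance below by a Wasserstein/$L^2$ argument that survives the change of measure between the two Gaussian mixtures. Write $g_\theta = f^*_{\mathrm{DP}}$ for parameter $\theta$, whose closed form is given by \cref{lem:bayes-opt-linear}. For any fixed $f$, by the triangle inequality in $L^2(\p_\theta)$ we have $\mathcal{E}(f;\theta)^{1/2}+\mathcal{E}(f;\theta)^{1/2}$-type bounds; more precisely, since $g_\theta$ is the $L^2(\p_\theta)$-projection of $f^*$ onto $\dom{F}_{\mathrm{DP}}(\mu_\cdot)$, we do \emph{not} directly get that $\mathcal{E}(f;\theta)$ controls $\|f-g_\theta\|^2_{L^2(\p_\theta)}$ for arbitrary $f\in\dom{L}^2$ — but $\mathcal{E}(f;\theta)=\|f-g_\theta\|^2_{L^2(\p_\theta)}$ holds by the definition in \cref{eq:deviate} (the error is measured against $f^*_{\mathrm{DP}}$ itself, not $f^*$). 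So the target reduces to
\begin{align}
\inf_{f\in\dom{L}^2}\|f-g_\theta\|^2_{L^2(\p_\theta)}\lor\|f-g_{\theta'}\|^2_{L^2(\p_{\theta'})}\ \gtrsim\ \text{(separation of $g_\theta$ from $g_{\theta'}$)}.\nonumber
\end{align}

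First I would establish the elementary ``two-point'' inequality: for measures $P,Q$ on a common space and functions $g,g'$,
\begin{align}
\inf_{f}\ \|f-g\|^2_{L^2(P)}\lor\|f-g'\|^2_{L^2(Q)}\ \ge\ \frac14\, W_2^2\big(g_\#P,\ g'_\#Q\big),\nonumber
\end{align}
or a variant with the pushforwards, obtained by noting that the optimal $f$ interpolates between the two targets and using that $\|f-g\|_{L^2(P)}\ge W_2(f_\# P, g_\# P)$ together with a triangle inequality for $W_2$. Actually a cleaner route avoids $W_2$: condition on $S=s$ and on the \emph{event} that the two conditional laws of $X$ overlap. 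Concretely, for each $s$, $X\mid S=s$ is $N(\mu_s,\sigma_X^2 I)$ under $\theta$ and $N(\mu'_s,\sigma_X^2 I)$ under $\theta'$; the key step is a one-dimensional reduction along the direction $u_s$ in which $g_\theta(\cdot,s)-g_{\theta'}(\cdot,s)$ varies — but note $g_\theta(x,s)=\overline{\norm{\beta_\cdot}}\abrace{\beta_s/\norm{\beta_s},x-\mu_s}+c_\theta$, so $g_\theta(\cdot,s)$ and $g_{\theta'}(\cdot,s)$ are affine with gradients $v_s:=\overline{\norm{\beta_\cdot}}\beta_s/\norm{\beta_s}$ and $v'_s:=\overline{\norm{\beta'_\cdot}}\beta'_s/\norm{\beta'_s}$ respectively. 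The relevant ``separation'' is exactly $\norm{v_s-v'_s}^2$, which is the quantity appearing in the statement, times a variance factor $\sigma_X^2$ and an overlap factor.

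The main work — and the main obstacle — is the overlap factor $e^{-d_s}(1+d_s/2)^{1+d/2}$, which must come from comparing the two Gaussians. I would proceed as follows: (i) reduce to $s$ fixed and to a single scalar coordinate by projecting $X$ onto a well-chosen line (probably $\mathrm{span}(v_s-v'_s)$, or a 2-plane containing $v_s,v'_s,\mu_s-\mu'_s$), turning the problem into a two-point problem for two 1-D (or 2-D) Gaussians with equal covariance $\sigma_X^2$ and means differing by $\Delta_s:=\mu_s-\mu'_s$ with $\|\Delta_s\|^2/(2\sigma_X^2)=d_s$; (ii) for the scalar two-point problem, use that $\inf_f\big(\mathbb{E}_P[(f-a)^2]\lor\mathbb{E}_Q[(f-a')^2]\big)$ with $a,a'$ affine of slope $\kappa:=\norm{v_s-v'_s}$ is at least $\frac{\kappa^2}{4}$ times a quantity governed by $\int \min(dP,dQ)$-type mass (a Le Cam / affinity argument): any $f$ close to $a$ in $L^2(P)$ and to $a'$ in $L^2(Q)$ must pay on the region where $P$ and $Q$ both have mass and $a\ne a'$; (iii) lower-bound the relevant Gaussian affinity/second-moment-weighted overlap explicitly. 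The factor $(1+d_s/2)^{1+d/2}$ strongly suggests the bound is obtained not by the Hellinger affinity of the full $d$-dimensional Gaussians (which would give $e^{-d_s/\text{const}}$ with no polynomial gain) but by a more careful computation: likely one writes $\mathbb{E}_{\theta'}[\,\cdot\,] = \mathbb{E}_\theta[(d\p_{\theta'}/d\p_\theta)\,\cdot\,]$, uses the explicit Gaussian Radon–Nikodym derivative $\exp(\abrace{\Delta_s,X-\mu_s}/\sigma_X^2 - \|\Delta_s\|^2/2\sigma_X^2)$, and then optimizes a quadratic-exponential integral; the polynomial term $(1+d_s/2)^{1+d/2}$ is exactly the kind of factor that drops out of $\int \|x\|^2 e^{-\|x\|^2(1-\text{something})}\,dx$ or a completed-square Gaussian integral in $d$ dimensions. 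So the concrete hard step is: choose the near-optimal $f$ (or dually, a clever test function / transport map), and evaluate the resulting $d$-dimensional Gaussian integral to extract precisely $\frac{\sigma_X^2 e^{-d_s}}{4}\norm{v_s-v'_s}^2(1+d_s/2)^{1+d/2}$ per group $s$, then sum against $p_s$ using that both $\mathcal{E}(f;\theta)$ and $\mathcal{E}(f;\theta')$ decompose additively over $s$ with weights $p_s$ (the $p_s$'s being common to $\theta,\theta'$). I expect the cleanest writeup to fix $f$ arbitrary, lower-bound $\mathcal{E}(f;\theta)+\mathcal{E}(f;\theta')\ge \tfrac12(\text{something}) \ge 2\epsilon$ giving the max $\ge\epsilon$, with the ``something'' handled group-by-group by the Gaussian change-of-measure integral above; the verification that the exponent on $(1+d_s/2)$ is exactly $1+d/2$ and the constant is exactly $1/4$ is where all the care goes.
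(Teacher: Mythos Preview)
Your high-level skeleton is right and matches the paper: bound the max by the average with $\eta=\tfrac12$, minimize over $f$, and then extract the Gaussian overlap factor. But two points in your plan would derail the computation.

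First, the step you label ``choose the near-optimal $f$'' is not heuristic in the paper --- it is an exact pointwise minimization. For fixed $(x,s)$, the map $t\mapsto \tfrac12 q_s(x)(t-g_\theta(x,s))^2+\tfrac12 q'_s(x)(t-g_{\theta'}(x,s))^2$ is minimized at $t=\frac{q_s g_\theta+q'_s g_{\theta'}}{q_s+q'_s}$, and plugging back gives the clean identity
\[
\inf_f\ \tfrac12\mathcal{E}(f;\theta)+\tfrac12\mathcal{E}(f;\theta')\ =\ \sum_s p_s\int \frac{1}{4}\,\frac{2q_s(x)q'_s(x)}{q_s(x)+q'_s(x)}\,\bigl(g_\theta(x,s)-g_{\theta'}(x,s)\bigr)^2\,dx.
\]
Everything downstream is an explicit evaluation of this integral; there is no Le Cam/affinity inequality and no Wasserstein step. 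The paper rewrites the harmonic-mean weight as $\tfrac14\sqrt{q_sq'_s}/\cosh(\tfrac12\ln(q_s/q'_s))$, observes that $\sqrt{q_sq'_s}$ is (up to the factor $e^{-d_s/4}$ in your normalization, and exactly $e^{-d_s}$ after also absorbing the later $e^{-d_s/2}$-type pieces) a Gaussian density centered at $\bar\mu_s=(\mu_s+\mu'_s)/2$, and that $\tfrac12\ln(q_s/q'_s)=\sigma_X^{-2}\langle x-\bar\mu_s,\mu_s-\mu'_s\rangle/2$. Then it uses $\cosh(u)\le e^{u^2/2}$ and Cauchy--Schwarz on that inner product.

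Second, your proposed reduction to a $1$- or $2$-dimensional projection is a genuine wrong turn: the exponent $1+\tfrac{d}{2}$ in $(1+d_s/2)^{1+d/2}$ comes precisely from integrating in all $d$ dimensions, and any projection would collapse it to a dimension-free constant. The paper instead conditions on the radius $r=\norm{\bar X_s-\bar\mu_s}$: on the sphere of radius $r$ the affine difference $g_\theta-g_{\theta'}$ has conditional second moment $\tfrac{r^2}{d}\norm{v_s-v'_s}^2+c^2$ by isotropy, and $r^2\sim\mathrm{Gamma}(d/2,2\sigma_X^2)$. The remaining one-dimensional integral $\mathbb{E}\bigl[(\tfrac{\gamma}{d}\norm{v_s-v'_s}^2+c^2)e^{-c'^2\gamma/2}\bigr]$ over a Gamma variable is computed termwise via the series for $e^{-z}$ and the identity $_2F_1(a,b;b;z)=(1-z)^{-a}$, which is exactly where $(1+d_s/2)^{1+d/2}$ appears. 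Dropping the nonnegative $c^2$ term then yields the stated theorem.
\qed
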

The term $\norm{\nicefrac{\overline{\norm{\beta_\cdot}}\beta_s}{\norm{\beta_s}} - \nicefrac{\overline{\norm{\beta'_\cdot}}\beta'_s}{\norm{\beta'_s}}}^2$ characterizes the lower bound, which is different from the characteristic term in standard linear regression, $\norm{\beta_s-\beta'_s}^2$.

We next present the construction of $\hat\Theta$. We construct $\hat\Theta$ such that each of its elements corresponds to an index from the set $\dom{V}=\cbrace{-1,1}^{M\times(d-1)}$, denoted by $\theta_v = \cbrace{\beta_{v,\cdot},\mu_{v,\cdot}} \in \hat\Theta$, where $\beta_{v,s}$ is controlled such that its norm is equivalent to a specified value $B_s$, i.e., $\norm{\beta_{v,s}} = B_s$. This construction ensures that $\hat\Theta \subset \dom{B}\times\dom{M}$. Given positive values $\epsilon_1,...,\epsilon_M$ and $B_1,...,B_M$, we construct $\hat\Theta$ as follows:
\begin{gather}
\mu_{v,s}=0, \norm{\beta_{v,s}}=B_s, \frac{\beta_{v,s,1}}{\norm{\beta_{v,s}}} = \sqrt{1 - \epsilon^2_{s}}, \textand \frac{\beta_{v,s,i}}{\norm{\beta_{v,s}}} = v_{s,i-1}\frac{\epsilon_{s}}{\sqrt{d-1}} \for i = 2,...,d. \label{eq:construct}
\end{gather}
We demonstrate the following properties for $\hat\Theta$ defined in \cref{eq:construct}.
\begin{theorem}\label{thm:distinct-params1}
Given $\epsilon_1,...,\epsilon_M > 0$ and $B_1,...,B_M > 0$, let $\hat\Theta \subset \Theta$ represent the set of parameters defined in \cref{eq:construct}. Let $\pi_{\theta|n_\cdot}$ be the distribution of the sample $D_n$ conditioned on $n_\cdot$ with the distribution parameter $\theta$. Then, we have 1) for any $v,v' \in \dom{V}$,
\begin{align}
\inf_{f \in \dom{L}^2}\mathcal{E}(f;\theta_v)\lor\mathcal{E}(f;\theta_{v'}) \ge \sum_{s\in[M]}p_s\paren*{\sum_{s' \in [M]}p_{s'}B_{s'}}^2\frac{\sigma^2_X\epsilon_s^2}{d-1}d_H(v_s,v's),
\end{align}
and 2) for $v,v' \in \dom{V}$,
\begin{align}
\KL\paren*{\pi_{\theta_v|n_\cdot},\pi_{\theta_{v'}|n_\cdot}} = \sum_{s\in[M]}\frac{2\sigma^2_XB_s^2n_s\epsilon_s^2}{\sigma^2_\xi(d-1)}d_H(v_s,v'_s).
\end{align}
\end{theorem}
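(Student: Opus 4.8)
The plan is to prove the two claims separately, in both cases exploiting the explicit coordinates in \cref{eq:construct}; the membership $\hat\Theta\subset\Theta$ is immediate, since $\mu_{v,s}=0$ gives $\norm{\mu_{v,s}}\le U$ and $B_1,\dots,B_M$ are taken to satisfy \cref{eq:rist-beta}.

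For the first claim I would invoke \cref{thm:two-point-lower} with the pair $\theta_v,\theta_{v'}$. Since $\mu_{v,s}=\mu_{v',s}=0$ for every $s$, the quantity $d_s=\frac{1}{2\sigma^2_X}\norm{\mu_{v,s}-\mu_{v',s}}^2$ is zero, so the hypothesis $d_s<1$ holds and the factors $e^{-d_s}$ and $(1+d_s/2)^{1+d/2}$ both equal $1$; the bound collapses to $\sum_{s\in[M]}p_s\frac{\sigma^2_X}{4}\norm*{\overline{\norm{\beta_{v,\cdot}}}\,\beta_{v,s}/\norm{\beta_{v,s}}-\overline{\norm{\beta_{v',\cdot}}}\,\beta_{v',s}/\norm{\beta_{v',s}}}^2$. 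The crucial point is that $\overline{\norm{\beta_{v,\cdot}}}=\sum_{s'}p_{s'}\norm{\beta_{v,s'}}=\sum_{s'}p_{s'}B_{s'}$ does not depend on $v$, so it factors out of the squared norm as $\paren{\sum_{s'}p_{s'}B_{s'}}^2$, leaving $\norm*{\beta_{v,s}/\norm{\beta_{v,s}}-\beta_{v',s}/\norm{\beta_{v',s}}}^2$. Evaluating this coordinatewise by \cref{eq:construct}: the first coordinate equals $\sqrt{1-\epsilon^2_s}$ under both indices and cancels, and coordinate $i\in\cbrace{2,\dots,d}$ contributes $(v_{s,i-1}-v'_{s,i-1})^2\epsilon^2_s/(d-1)$; because $(v_{s,j}-v'_{s,j})^2$ equals $0$ or $4$ according to whether the two bits agree, $\sum_{i=2}^d(v_{s,i-1}-v'_{s,i-1})^2=4\,d_H(v_s,v'_s)$, so the squared norm is $4\epsilon^2_s d_H(v_s,v'_s)/(d-1)$. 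Substituting gives the stated inequality, and the infimum over $\dom{L}^2$ matches because \cref{thm:two-point-lower} is already an infimum over $\dom{L}^2$.

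For the second claim I would tensorize the KL divergence. Conditioned on $n_\cdot$, the law of $D_n$ under a parameter $\theta$ is a mixture, over the parameter-free assignment of group labels to the $n$ indices, of product measures in which each observation with $S_i=s$ satisfies $X_i\sim N(0,\sigma^2_XI)$ — the same marginal under $\theta_v$ and $\theta_{v'}$ because $\mu_{v,s}=0$ — and $Y_i\mid X_i\sim N(\abrace{\beta_{v,s},X_i},\sigma^2_\xi)$. Since both the label-assignment distribution and the $X$-marginals are independent of the parameter, the KL chain rule lets me further condition on the full label sequence at no cost, so $\KL(\pi_{\theta_v|n_\cdot},\pi_{\theta_{v'}|n_\cdot})=\sum_{s\in[M]}n_s\,\Mean_{X\sim N(0,\sigma^2_XI)}\bracket{\KL\paren{N(\abrace{\beta_{v,s},X},\sigma^2_\xi),\,N(\abrace{\beta_{v',s},X},\sigma^2_\xi)}}$. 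Using $\KL(N(a,\sigma^2_\xi),N(b,\sigma^2_\xi))=(a-b)^2/(2\sigma^2_\xi)$ and $\Mean\bracket{\abrace{u,X}^2}=\sigma^2_X\norm{u}^2$, this becomes $\sum_s n_s\frac{\sigma^2_X}{2\sigma^2_\xi}\norm{\beta_{v,s}-\beta_{v',s}}^2$; and $\norm{\beta_{v,s}-\beta_{v',s}}^2=B^2_s\norm*{\beta_{v,s}/\norm{\beta_{v,s}}-\beta_{v',s}/\norm{\beta_{v',s}}}^2=4B^2_s\epsilon^2_s d_H(v_s,v'_s)/(d-1)$ by the coordinatewise computation above, which yields the claimed identity.

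The algebra is routine; the two spots that call for care are, first, noticing that the common value of $\overline{\norm{\beta_{v,\cdot}}}$ across all $v$ is exactly what makes the direction difference factor out cleanly — without it the bound would not split into a prefactor times $d_H(v_s,v'_s)$ — and second, justifying that conditioning on the histogram $n_\cdot$ rather than on the whole sequence $S_1,\dots,S_n$ does not obstruct the tensorization, which holds because the conditional law of the label assignment given $n_\cdot$ is identical for every $\theta$ and therefore contributes nothing to the divergence. I expect this second point to be the more delicate one to state rigorously.
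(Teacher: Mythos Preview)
Your proposal is correct and follows essentially the same route as the paper: invoke \cref{thm:two-point-lower} with $d_s=0$, factor out the $v$-independent constant $\overline{\norm{\beta_{v,\cdot}}}=\sum_{s'}p_{s'}B_{s'}$, and reduce the remaining direction difference coordinatewise to $4\epsilon_s^2 d_H(v_s,v'_s)/(d-1)$; for the KL part the paper simply records the closed form $\sum_s n_s\bigl(\tfrac{1}{2\sigma_X^2}\norm{\mu_{v,s}-\mu_{v',s}}^2+\tfrac{\sigma_X^2}{2\sigma_\xi^2}\norm{\beta_{v,s}-\beta_{v',s}}^2+\tfrac{1}{2\sigma_\xi^2}\abrace{\mu_{v,s},\beta_{v,s}-\beta_{v',s}}^2\bigr)$ and specializes it, which is exactly what your tensorization argument derives. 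Your extra care about why conditioning on $n_\cdot$ (rather than the full label sequence) is harmless is a point the paper leaves implicit, so your write-up is if anything more complete.
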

By integrating \cref{thm:applied-fano,thm:two-point-lower,thm:distinct-params1} and employing the renowned Varshamov-Gilbert bound, we derive the lower bound in \cref{thm:main-err}.

\section{Conclusion}\label{sec:conclude}
This paper investigates a regression problem with $(\alpha,\delta)$-fairness consistency as a fairness constraint. Specifically, we demonstrate that, under the constraint of $(\alpha,\delta)$-fairness, the minimax optimal error scales as $\nicefrac{\sigma^2_\xi B^2dM}{n}$ up to a constant factor, when $\alpha \in (0,\nicefrac{1}{2}]$. Additionally, we provide the fair regressor that achieves this optimal error.

\parahead{Potential negative societal impacts}
Our study aims to mitigate the negative impact of regression models on social groups, rather than to cause harm. However, our results are only valid for linear models, as defined in \cref{eq:model}. Misapplication of our findings to other models may result in discriminatory treatment, which should be avoided.

\section*{Acknowledgement}
This work was partly supported by JSPS KAKENHI Grant Numbers JP23K13011 and JP23H00483.

\printbibliography

\appendix
\section{Comparison of Existing and Our Unfairness Scores}
This section compares our unfairness score with existing ones. Recall that our unfairness score is defined as the maximum Wasserstein distance between any two distributions $\nu_{f|s}$ and $\nu_{f|s'}$ over all pairs of groups $s$ and $s'$, as follows:
\begin{align}
    U(f) = \max_{s,s'\in[M]}W_2(\nu_{f_n|s}, \nu_{f|s'}).
\end{align}
In contrast, \textcite{Agarwal2019FairAlgorithms,Chzhen2020FairBarycenters,Chzhen2020FairRecalibration} use the Kolmogorov distance $D_{\mathrm{Kol}}$ to measure unfairness, which is defined as:
\begin{align}
    U_{\mathrm{Kol}}(f) = \max_{s,s'\in[M]}D_{\mathrm{Kol}}(\nu_{f|s}, \nu_{f|s'}).
\end{align}
The difference between our score and $U_{\mathrm{Kol}}(f)$ is solely the choice of distance metric. Our score utilizes the Wasserstein distance, while $U_{\mathrm{Kol}}(f)$ uses the Kolmogorov distance. This difference arises mainly from technical reasons.

In addition, \textcite{Chzhen2022ARegression} proposed another unfairness score, denoted by $U_{\mathrm{Avg}W_2}(f)$, which is defined as the average of the Wasserstein distance, as follows:
\begin{align}
U_{\mathrm{Avg}W_2}(f) = \inf_{\nu}\sum_{s\in[M]}p_sW_2\paren*{\nu_{f|s},\nu}.
\end{align}
Here, the score places more emphasis on the major groups, as reflected by the weight of $p_s$. This may not be desirable if the unfairness is more prevalent in the minority groups, which may be common in real-world scenarios.

\section{Estimator Details}
\begin{figure}[t]
    \centering
    \includegraphics[width=.7\textwidth]{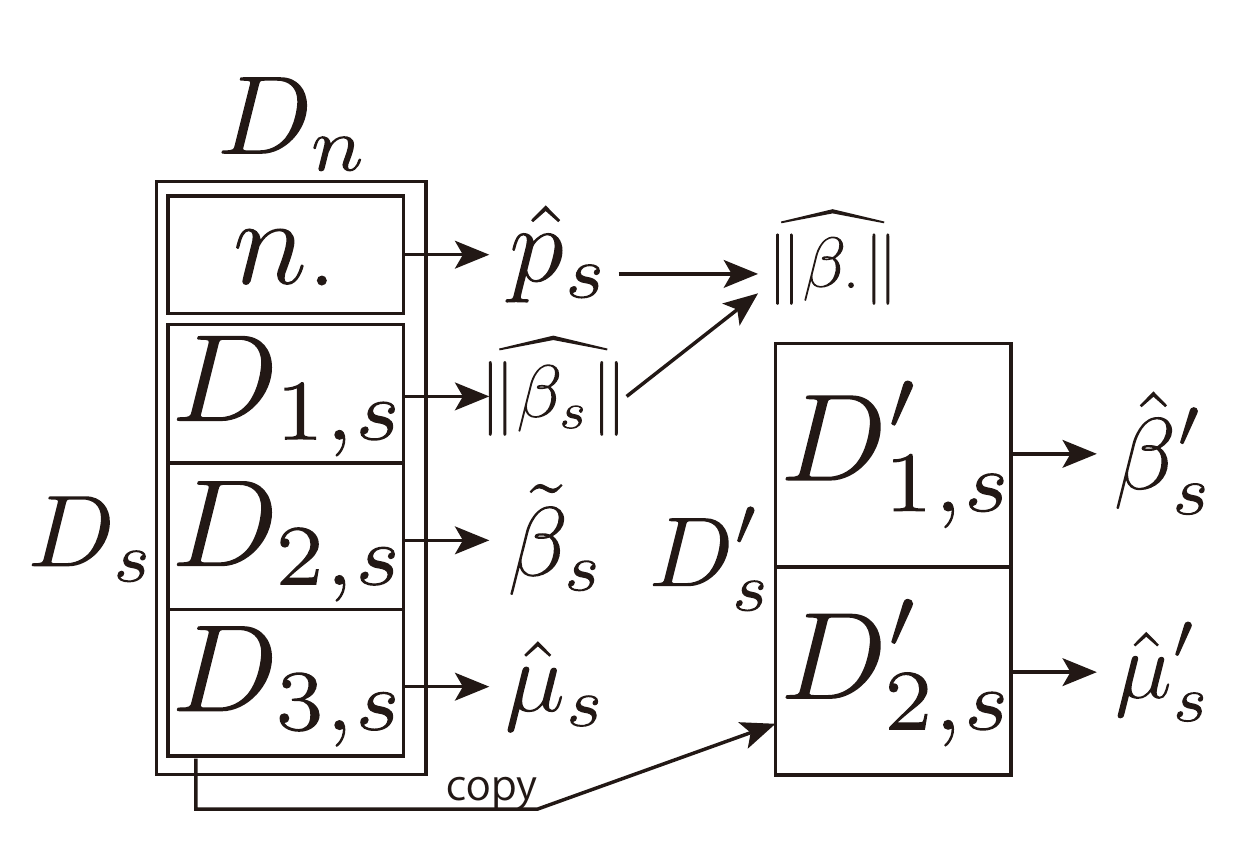}
    \caption{Sample splitting for constructing estimators. }
    \label{fig:estimator-const}
\end{figure}
\begin{algorithm}[t]
    \SetKwInOut{Input}{Input}
    \SetKwInOut{Output}{Output}
    \Input{The sample $D_n=\cbrace{(X_i,S_i,Y_i}_{i=1}^n$.}
    \Output{The regressor $\hat{f}_n$.}
    \For{$s \gets 1$ \KwTo $M$}{
        Calculate $n_s$ and construct the group-wise sample $D_s$ and its duplicate $D'_s$ \;
        Partition $D_s$ into equal-sized subsets: $D_{1,s}$, $D_{2,s}$, and $D_{3,s}$ \;
        Compute the estimands $\hat{p}_s$ from $n_\cdot$, $\widehat{\norm{\beta_s}}$ from $D_{1,s}$, $\tilde\beta_s$ from $D_{2,s}$, and $\hat\mu_s$ from $D_{3,s}$ \;
        Partition $D'_s$ equally into $D'_{1,s}$ and $D'_{2,s}$ \;
        Compute the estimands $\hat\beta'_s$ from $D'_{1,s}$ and $\hat\mu'_s$ from $D'_{2,s}$ \;
    }
    Calculate $\hat{f}_n$ using \cref{eq:loose-reg} \;
    \Return{$\hat{f}_n$}
    \caption{Algorithm of the proposed optimal estimator.}\label{alg:est-alg}
\end{algorithm}

This section describes the construction of our optimal estimator in detail. Recall that our estimator is a plugin estimator in which we first estimate the parts of the terms in \cref{eq:opt-cls} and then substitute them into \cref{eq:opt-cls}. Specifically, we construct estimators for$\widehat{\norm{\beta_\cdot}}$, $\tilde\beta_s$, $\hat{p}_{s'}$, $\hat\beta'_{s'}$, and $\hat\mu'_{s'}$, where they correspond to the terms in \cref{eq:opt-cls} as follows:
\begin{align}
    \underbracket{\overline{\norm{\beta^*_\cdot}}}_{\widehat{\norm{\beta_\cdot}}}\abrace*{\underbracket{\frac{\beta^*_s}{\norm{\beta^*_s}}}_{\tilde\beta_s},x - \underbracket{\mu_s}_{\hat\mu_s}} + \sum_{s'\in[M]}\underbracket{p_{s'}}_{\hat{p}_{s'}}\abrace*{\underbracket{\beta^*_{s'}}_{\hat\beta'_{s'}},\underbracket{\mu_{s'}}_{\hat\mu'_{s'}}}.
\end{align}

For analysis purposes, we split the sample into several subsets and pass each subset to the corresponding estimator~(the correspondence is explained later). \cref{fig:estimator-const} shows an overview of the sample splitting and the correspondence between the subsets and estimators. We construct the histogram of the sensitive attribute $S_i$ from the sample $D_n$, denoted as $n_\cdot = (n_1,...,n_M)$, where $n_s = \abs{\cbrace{i \in [n] : S_i = s}}$~(upper left in \cref{fig:estimator-const}). We also construct group-wise samples $D_s = \cbrace{(X_i,Y_i) : i \in [n], S_i = s}$. For each $s \in [M]$, we divide $D_s$ into $D_{1,s}$, $D_{2,s}$, and $D_{3,s}$ such that $\abs{D_{b,s}} \coloneqq n_{b,s} \ge \floor{n_s/3}$ for $b \in [3]$~(lower left in \cref{fig:estimator-const}). We use $n_\cdot$, $D_{1,s}$, $D_{2,s}$, and $D_{3,s}$ to estimate $\hat{p}s$, $\widehat{\norm{\beta_s}}$, $\tilde\beta_s$, and $\hat\mu_s$, respectively, where $\widehat{\norm{\beta_s}}$ is the estimator for $\norm{\beta^*s}$. We obtain $\widehat{\norm{\beta_\cdot}}$ from the combination of $\hat{p}_s$ and $\widehat{\norm{\beta_s}}$~(middle in \cref{fig:estimator-const}). Furthermore, for each $s \in [M]$, we create a copy of $D_s$, denoted as $D's$, and divide it into $D'{1,s}$ and $D'{2,s}$ such that $\abs{D'{b,s}} \coloneqq n'{b,s} \ge \floor{n_s/2}$ for $b \in [2]$. We use $D'{1,s}$ and $D'_{2,s}$ to estimate $\hat\beta'_s$ and $\hat\mu'_s$, respectively~(right in \cref{fig:estimator-const}).

We describe the construction of each estimator below. We first define some notations. Let the $i$th element of $D_{b,s}$ and $D'_{b,s}$ be denoted as $(X_{b,s,i},Y_{b,s,i})$ and $(X'_{b,s,i},Y'_{b,s,i})$, respectively. We use the matrix notations $X_{b,s} = (X_{b,s,1} \cdots X_{b,s,n_{b,s}})^\top$, $X'_{b,s} = (X'_{b,s,1} \cdots X'_{b,s,n'_{b,s}})^\top$, $Y_{b,s} = (Y_{b,s,1} \cdots Y_{b,s,n_{b,s}})^\top$, and $Y'_{b,s} = (Y'_{b,s,1} \cdots Y'_{b,s,n'_{b,s}})^\top$. We define the ordinary least square estimators for the subset of the sample $D_{b,s}$ and $D'_{b,s}$ as follows:
\begin{align}
    \hat\beta_{b,s} =& \paren*{\frac{1}{n_{b,s}}X_{b,s}^{\top}X_{b,s}}^{-1}\paren*{\frac{1}{n_{b,s}}X_{b,s}^{\top}Y_{b,s}}, \\
    \hat\beta'_{b,s} =& \paren*{\frac{1}{n'_{b,s}}(X'_{b,s})^{\top}X'_{b,s}}^{-1}\paren*{\frac{1}{n'_{b,s}}(X'_{b,s})^{\top}Y'_{b,s}}.
\end{align}
We construct each estimator as follows:
\begin{description}[leftmargin=!, labelwidth=!]
    \item[($\hat{p}s$)] We use the empirical mean defined as $\hat{p}_s = \nicefrac{n_s}{n}$.
    \item[($\widehat{\norm{\beta_s}}$)] We use the norm of the OLS estimator. We define $\widehat{\norm{\beta_s}} = \norm{\hat\beta_{1,s}}$ if $n_s > 18d$, and $\widehat{\norm{\beta_s}}=0$ otherwise.
    \item[($\widehat{\norm{\beta_\cdot}}$)] Since $\overline{\norm{\beta^*_\cdot}} = \sum_{s \in [M]}p_s\norm{\beta^*_s}$, we construct its estimator as $\widehat{\norm{\beta_\cdot}} = \sum_{s \in [M]}\hat{p}_s\widehat{\norm{\beta_s}}$.
    \item[($\tilde\beta_s$)] We use the normalized ordinary least square estimator; $\tilde\beta_s = \hat\beta_{2,s}/\norm{\hat\beta_{2,s}}$ if $n_s > 18d$, and $\tilde\beta_s = 0$ otherwise.
    \item[($\hat\mu_s$)] We use the empirical mean; $\hat\mu_s = \frac{1}{n_{3,s}}\sum_{i=1}^{n_{3,s}}X_{3,s,i}$.
    \item[($\hat\beta'_s$)] We employ the ordinary least square estimator; $\hat\beta'_s = \hat\beta'_{1,s}$ if $n_{s} > 12d$, and $\hat\beta'_s = 0$ otherwise.
    \item[($\hat\mu'_s$)] We use the empirical mean; $\hat\mu'_s = \frac{1}{n'_{2,s}}\sum_{i=1}^{n'_{2,s}}X'_{2,s,i}$ if $n_s > 12d$, and $\hat\mu'_s = 0$ otherwise.
\end{description}
Some estimators change their behavior based on the condition $n_s > 18d$ or $n_s > 12d$, which is done for technical purposes in later analyses.

Recall that the final regressor is constructed as follows:
\begin{align}
\hat{f}_n(x,s) = \widehat{\norm{\beta_{\cdot}}}\abrace*{\tilde\beta_s, x - \hat\mu_s} + \sum_{s'\in[M]}\hat{p}_{s'}\abrace*{\hat\beta'_s,\hat\mu'_s}.
\end{align}
\cref{alg:est-alg} shows the algorithm for our estimator.

\section{Bayes Optimal Regressor under Our Modell}
This section presents the proof of \cref{lem:bayes-opt-linear}, demonstrating the Bayes optimal regressor under the model \cref{eq:model}. To establish this, we make use of a key result from the work of \textcite{Chzhen2020FairBarycenters}:
\begin{theorem}[\textcite{Chzhen2020FairBarycenters}]\label{thm:chzhen-barycenters}
 Assume, for each $s \in [M]$, $\nu_{f^*|s}$ has a density. Then, 
 \begin{align}
     \inf_{f: \mathrm{DP}}\Mean\bracket*{\paren*{f(X,S) - f^*(X,S)}^2} = \inf_{\nu}\sum_{s \in [M]}p_sW^2_2(\nu_{f^*|s},\nu). 
 \end{align}
 where the infimum is taken over all the regressors that satisfy the demographic parity. Moreover, letting $f^*_{\mathrm{DP}}$ and $\nu^*$ be the minimizer of the lhs and rhs, respectively, we have $\nu_{f^*_{\mathrm{DP}}} = \nu^*$ and
 \begin{align}
     f^*_{\mathrm{DP}}(x,s) = \paren*{\sum_{s'\in[M]}p_{s'}F^{-1}_{f^*|s'}}\circ F_{f^*|s}\paren*{f^*(x,s)}. \label{eq:opt-g}\sbox0{\ref{eq:opt-g}}
 \end{align}
\end{theorem}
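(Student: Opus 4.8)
\textbf{Proof plan for Theorem~\ref{thm:chzhen-barycenters}.}
The plan is to reduce the demographic-parity-constrained $L^2$ projection problem to a Wasserstein barycenter problem on the real line, where everything is explicitly solvable via quantile functions. First I would observe that, by the tower property, for any regressor $f$ the objective splits group-wise:
\begin{align*}
\Mean\bracket*{\paren*{f(X,S)-f^*(X,S)}^2} = \sum_{s\in[M]}p_s\,\Mean\bracket*{\paren*{f(X,S)-f^*(X,S)}^2\,\middle|\,S=s}.
\end{align*}
Under demographic parity the conditional law $\nu_{f|s}$ is the same measure $\nu$ for every $s$, so the constraint decouples the choice of $f$ into a choice of the common output law $\nu$ together with, for each $s$, a coupling between $\nu$ and $\nu_{f^*|s}$ realized by the pair $\bigl(f^*(X,S),f(X,S)\bigr)$ conditioned on $S=s$. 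Minimizing the inner conditional expectation over all such couplings gives exactly $W_2^2(\nu_{f^*|s},\nu)$ by definition of the $2$-Wasserstein distance; hence $\inf_{f:\mathrm{DP}}\Mean[(f-f^*)^2] \ge \inf_\nu\sum_s p_s W_2^2(\nu_{f^*|s},\nu)$. For the reverse inequality I would take the optimal $\nu^*$ and, for each $s$, the optimal coupling $\pi_s$; since $\nu_{f^*|s}$ has a density, the optimal coupling on $\RealSet$ is deterministic and monotone, given by the increasing map $T_s = F^{-1}_\nu\circ F_{f^*|s}$ (Hoeffding--Fréchet / monotone rearrangement), so setting $f(x,s) := T_s(f^*(x,s))$ produces a genuine regressor whose conditional output law is $\nu^*$ for all $s$ — i.e.\ it satisfies demographic parity — and attains the bound.

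Next I would identify the barycenter explicitly. On $\RealSet$, the $W_2$ barycenter of measures $\{\nu_{f^*|s}\}$ with weights $\{p_s\}$ has quantile function equal to the weighted average of the quantile functions, $F^{-1}_{\nu^*} = \sum_{s'} p_{s'} F^{-1}_{f^*|s'}$; this is a standard one-dimensional optimal-transport fact that follows from writing $W_2^2(\mu,\nu) = \int_0^1 (F^{-1}_\mu(t)-F^{-1}_\nu(t))^2\,dt$ and minimizing the resulting quadratic in $F^{-1}_\nu$ pointwise in $t$. Composing the optimal monotone map for group $s$, namely $F^{-1}_{\nu^*}\circ F_{f^*|s}$, with $f^*(x,s)$ then yields precisely \cref{eq:opt-g}. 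I would also note that the density assumption on each $\nu_{f^*|s}$ guarantees $F_{f^*|s}$ is continuous, so the composition pushes $\nu_{f^*|s}$ forward exactly to $\nu^*$ and the constructed $f^*_{\mathrm{DP}}$ is well defined $\nu_{f^*|s}$-a.s.; uniqueness of the minimizing output law follows from strict convexity of the barycenter functional when at least one $\nu_{f^*|s}$ is non-degenerate.

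The main obstacle is the rigorous handling of the two-way passage between regressors and couplings: showing that \emph{every} demographic-parity regressor induces, for each $s$, an admissible coupling of $\nu_{f^*|s}$ and its common output law (easy, by taking the joint law of $(f^*(X,S),f(X,S))\mid S=s$), and conversely that the optimal couplings can be \emph{simultaneously} realized by a single measurable function $f$ of $(x,s)$ — this is where the density hypothesis is essential, since it forces each optimal coupling to be supported on the graph of a function (monotone rearrangement), allowing the pieces to be glued across groups into one regressor without enlarging the probability space. A secondary technical point is justifying the interchange of infimum over $\nu$ and the pointwise-in-$t$ minimization of the quantile integral, and checking measurability/integrability of $T_s\circ f^*$; these are routine given $f^*\in\dom{L}^2$ and the boundedness of the weights, so I would relegate them to a short verification rather than dwelling on them.
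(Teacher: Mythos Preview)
The paper does not prove this theorem; it is quoted verbatim as a result of \textcite{Chzhen2020FairBarycenters} and used as a black box to derive \cref{lem:bayes-opt-linear}. There is therefore no ``paper's own proof'' to compare against. Your sketch is the standard argument from that reference: decompose the risk group-wise, identify the DP constraint with a common output law, lower-bound each conditional term by $W_2^2(\nu_{f^*|s},\nu)$, and attain the bound via the monotone transport maps $F_{\nu^*}^{-1}\circ F_{f^*|s}$, with the one-dimensional barycenter characterized by averaging quantile functions. That is correct in outline and matches the approach in \textcite{Chzhen2020FairBarycenters}; the only places requiring care are exactly the ones you flag (realizing the optimal couplings by a single measurable $f$ via the density assumption, and the routine measurability/integrability checks).
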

Here, we denote $\nu_{f^*}$ as the distribution of $f^*(X,S)$, $F_{f|s}$ as the cumulative distribution function of $f(X,S)$ conditioned on $S=s$, and $F^{-1}_{f|s}$ as the inverse cumulative distribution function, given by $F^{-1}_{f|s}(t) = \inf\cbrace{y \in \RealSet | F_{f|s}(y) \ge t}$.

Building upon the results of \cref{thm:chzhen-barycenters}, we establish the proof of \cref{lem:bayes-opt-linear}.
\begin{proof}[Proof of \cref{lem:bayes-opt-linear}]
 Building upon \cref{thm:chzhen-barycenters}, we can derive the Bayes optimal regressor under the model \cref{eq:model} by obtaining closed expressions of the cumulative and inverse cumulative distribution functions $F_{f^*|s}$ and $F^{-1}_{f^*|s}$. To obtain these closed forms, we apply certain transformations to $f^*(X,S)$ that render it a random variable following a standard normal distribution. Let $\Phi$ and $\Phi^{-1}$ be the CDF and inverse CDF of the standard normal distribution, respectively. Through elementary calculations, we have:
 \begin{align}
     F_{f^*|s}(t) =& \p\cbrace*{f^*(X,S) \le t | S=s} \\
     =& \p\cbrace*{\abrace*{\beta^*_s, X} \le t | S=s} \\
     =& \p\cbrace*{\frac{1}{\sigma_X\norm{\beta^*_s}}\abrace*{\beta^*_s, X - \mu_s} \le \frac{1}{\sigma_X\norm{\beta^*_s}}\paren*{t - \abrace*{\beta^*_s, \mu_s}} | S=s}.
 \end{align}
 Here, we can readily observe that $\frac{1}{\sigma_X\norm{\beta^*_s}}\abrace*{\beta^*_s, X - \mu_s}$ follows the standard normal distribution under conditioned on $S=s$, as $X \sim N(\mu_s,\sigma_XI)$ conditioned on $S=s$. Consequently, we have 
 \begin{align}
     F_{f^*|s}(t) = \Phi\paren*{\frac{1}{\sigma_X\norm{\beta^*_s}}\paren*{t - \abrace*{\beta^*_s, \mu_s}}}. \label{eq:linear-cdf}
 \end{align}
 The inverse function of $F^{-1}_{f^*|s}(t)$ can be obtained by equating the right-hand side to $p$ and solving the resulting equation for $t$, which leads to
 \begin{align}
     F^{-1}_{f^*|s}(p) = \sigma_X\norm{\beta^*_s}\Phi^{-1}(p) + \abrace*{\beta^*_s,\mu_s}. \label{eq:linear-icdf}
 \end{align}
 By substituting \cref{eq:linear-cdf,eq:linear-icdf} into \cref{eq:opt-g} in \cref{thm:chzhen-barycenters}, we obtain the desired claim.
\end{proof}

\section{Details of Fairness Analysis}
In this section, we provide evidence of the guarantee of our estimator's fairness consistency. Specifically, we present the following theorem.
\begin{theorem}\label{thm:loose-fair} 
 For any $\delta \in (0,1]$, the regressor in \cref{eq:loose-reg} is $(\nicefrac{1}{2},\delta)$-consistently fair.
\end{theorem}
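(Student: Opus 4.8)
The plan is to obtain \cref{thm:loose-fair} as a rescaling of the high-probability bound already established in \cref{thm:consist-bound}. Fix $\delta\in(0,1]$; for $\delta=1$ the statement is vacuous, so assume $\delta\in(0,1)$. Recall $U(\hat{f}_n)=\max_{s,s'\in[M]}W_2(\nu_{\hat{f}_n|s},\nu_{\hat{f}_n|s'})$. By \cref{thm:consist-bound}, for every $n\ge n_0:=\lceil 48\ln(M/\delta)/\min_s p_s\rceil$ one has $\p\cbrace{U(\hat{f}_n)>C n^{-1/2}}\le\delta$, where $C$ is the $n$-independent prefactor appearing in that bound; $C$ depends only on the model quantities $B,\sigma_X,M,\delta$ and $\min_s p_s$, so the same tail bound holds uniformly over $\beta^*_\cdot\in\mathcal{B}$ and $\mu_\cdot\in\mathcal{M}$. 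This is precisely the condition of \cref{def:consistent} with $\alpha=\nicefrac{1}{2}$, the constant $C$, and the threshold $n_0$; hence $\hat{f}_n$ is $(\nicefrac{1}{2},\delta)$-consistently fair. (If the constant hypotheses of \cref{thm:consist-bound} must be strengthened by a $d$-dependent term to handle the thresholding discussed below, it suffices to enlarge $n_0$ accordingly --- harmless, since \cref{def:consistent} only asks for the tail bound to hold eventually in $n$.)

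Thus the substance lies in \cref{thm:consist-bound}, which I would prove as follows. Condition on the entire training sample $D_n$, so that $\widehat{\norm{\beta_\cdot}}$, $\tilde\beta_s$, $\hat\mu_s$, and $c:=\sum_{s'\in[M]}\hat{p}_{s'}\abrace{\hat\beta'_{s'},\hat\mu'_{s'}}$ are fixed and $\hat{f}_n(\cdot,s)$ is a deterministic affine map. Conditionally on $S=s$ we have $X\sim N(\mu_s,\sigma_X^2 I)$, hence $\hat{f}_n(X,s)\mid S=s$ is a univariate Gaussian with mean $m_s=\widehat{\norm{\beta_\cdot}}\abrace{\tilde\beta_s,\mu_s-\hat\mu_s}+c$ --- with $c$ independent of $s$ --- and variance $v_s=\widehat{\norm{\beta_\cdot}}^2\sigma_X^2\norm{\tilde\beta_s}^2$. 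Using the closed form $W_2^2(N(a,\sigma^2),N(a',\tau^2))=(a-a')^2+(\sigma-\tau)^2$ together with $\norm{\tilde\beta_s}\in\cbrace{0,1}$, on the event that every group size exceeds its threshold (so $\norm{\tilde\beta_s}=1$ for all $s$) the variance contributions cancel, the common constant $c$ drops out of $m_s-m_{s'}$, and $W_2(\nu_{\hat{f}_n|s},\nu_{\hat{f}_n|s'})=\abs{m_s-m_{s'}}\le\widehat{\norm{\beta_\cdot}}\paren{\abs{\abrace{\tilde\beta_s,\mu_s-\hat\mu_s}}+\abs{\abrace{\tilde\beta_{s'},\mu_{s'}-\hat\mu_{s'}}}}$.

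It then remains to control the two factors with high probability and to deal with the thresholds. A standard concentration bound for the isotropic OLS estimator (in the spirit of \cref{thm:err-norm}) gives $\widehat{\norm{\beta_\cdot}}=\sum_s\hat{p}_s\widehat{\norm{\beta_s}}\le\max_s\norm{\hat\beta_{1,s}}\le 2B$ with high probability once $n$ exceeds a multiple of $d/\min_s p_s$. The key structural point for the other factor is that $\tilde\beta_s$ is computed from $D_{2,s}$ while $\hat\mu_s$ is computed from the disjoint subsample $D_{3,s}$: conditionally on $\tilde\beta_s$ the scalar $\abrace{\tilde\beta_s,\hat\mu_s-\mu_s}$ is a centred Gaussian with variance $\sigma_X^2\norm{\tilde\beta_s}^2/n_{3,s}\le\sigma_X^2/n_{3,s}$, a one-dimensional quantity --- this is exactly why the ambient dimension $d$ does not enter the fairness rate, whereas a naive Cauchy--Schwarz bound through $\norm{\hat\mu_s-\mu_s}$ would introduce a spurious $\sqrt{d}$ --- so a Gaussian tail bound plus a union bound over the $M$ groups controls $\max_s\abs{\abrace{\tilde\beta_s,\mu_s-\hat\mu_s}}$ by a constant multiple of $\sigma_X\sqrt{\ln(M/\delta)/\min_s n_{3,s}}$ with high probability. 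A binomial (Chernoff) tail bound then shows that under the sample-size hypothesis every $n_s$, and hence $n_{3,s}\ge\lfloor n_s/3\rfloor$, is $\Omega(n p_s)$ simultaneously with high probability, and substituting this in yields the bound of \cref{thm:consist-bound}. The step I expect to require the most care is the bookkeeping around the thresholds $n_s>18d$: on the ``mixed'' event where only some groups clear the cutoff, $\norm{\tilde\beta_s}$ disagrees across groups, the variances $v_s$ fail to cancel, and $W_2$ can be as large as $\Theta(B\sigma_X)$, so this event must be absorbed into the $\delta$-budget --- which is what the sample-size condition (with $n_0$ taken large enough in $d$ for \cref{thm:loose-fair}) is calibrated to guarantee.
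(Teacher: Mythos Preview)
Your proposal is correct and follows essentially the same route as the paper: deduce \cref{thm:loose-fair} directly from \cref{thm:consist-bound}, and for the latter compute the conditional Gaussian law of $\hat f_n(X,s)$, apply the closed-form $W_2$ between univariate Gaussians so that the common variance and intercept $c$ cancel, then exploit the sample-splitting independence of $\tilde\beta_s$ and $\hat\mu_s$ to reduce to a one-dimensional Gaussian tail combined with a binomial (Chernoff/MGF) control on the $n_s$. If anything you are slightly more careful than the paper, which in its \cref{thm:wd-g} treats $\widehat{\norm{\beta_\cdot}}\le 2B$ as an almost-sure bound and does not separately discuss the mixed-threshold event you flag; your high-probability handling of both points is the right fix and costs only an enlarged $n_0$, exactly as you note.
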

We prove the above claim by utilizing \cref{thm:consist-bound}, which is shown in the main body, as follows:
\begin{proof}[Proof of \cref{thm:loose-fair}]
 We can confirm the claim by comparing the bound obtained in \cref{thm:consist-bound} with the definition of $(\frac{1}{2},\delta)$-consistent fairness in \cref{def:consistent}. In particular, we can set $C = 4B\sigma_X\sqrt{\frac{48\ln(M/\delta)}{\min_{s\in[M]}p_s}}$ and $n_0 = n \ge 48\ln(M/\delta)/\min_sp_s$ to satisfy the definition of $(\frac{1}{2},\delta)$-consistent fairness.
\end{proof}

Next, we provide the proof of \cref{thm:consist-bound}. To this end, we prove the following two theorems:
\begin{theorem}\label{thm:wd-g}
  Let $\hat{f}_{n}$ be the estimator of $f^*_{\mathrm{DP}}$ defined in \cref{eq:loose-reg}. Then, almost surely, we have
  \begin{align}
      W_2\paren*{\nu_{\hat{f}_{n}|s},\nu_{\hat{f}_{fn}|s'}} \le 2B\paren*{\abs*{\abrace*{\tilde\beta_s,\mu_{s} - \hat\mu_{s}}}\lor\abs*{\abrace*{\tilde\beta_{s'},\mu_{s'} - \hat\mu_{s'}}}}.
  \end{align}
\end{theorem}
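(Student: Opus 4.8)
\textbf{Proof proposal for \cref{thm:wd-g}.}
The plan is to exploit the fact that the regressor $\hat f_n(\cdot,s)$ is, conditionally on the training sample $D_n$, an affine function of $X$, so the conditional law $\nu_{\hat f_n|s}$ is a univariate Gaussian whose mean and variance are determined by $\widehat{\norm{\beta_\cdot}}$, $\tilde\beta_s$, $\hat\mu_s$, and the constant offset term. For two Gaussians $N(a,\tau^2)$ and $N(a',\tau'^2)$ the 2-Wasserstein distance has the closed form $W_2^2 = (a-a')^2 + (\tau-\tau')^2$, so the whole problem reduces to computing these mean and standard-deviation gaps across the groups $s$ and $s'$ and bounding them. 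First I would write, from \cref{eq:loose-reg} and $X\sim N(\mu_s,\sigma_X^2 I)$ conditioned on $S=s$, the conditional mean of $\hat f_n(X,S)$ given $D_n$ and $S=s$ as $\widehat{\norm{\beta_\cdot}}\abrace{\tilde\beta_s,\mu_s-\hat\mu_s} + \sum_{s'}\hat p_{s'}\abrace{\hat\beta'_{s'},\hat\mu'_{s'}}$, and the conditional variance as $\widehat{\norm{\beta_\cdot}}^2\sigma_X^2\norm{\tilde\beta_s}^2$.

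The key observation that makes the argument clean is that the conditional variance is the \emph{same} for every $s$ on the event where all $\tilde\beta_s$ are unit vectors (namely $\sigma_X^2\widehat{\norm{\beta_\cdot}}^2$), so the $(\tau-\tau')^2$ contribution to $W_2^2$ vanishes; on the complementary event some $\tilde\beta_s=0$, which must be handled separately but still yields a variance that is either $0$ or $\sigma_X^2\widehat{\norm{\beta_\cdot}}^2$, and one checks the stated bound still holds (or, more simply, one notes $\norm{\tilde\beta_s}\in\{0,1\}$ always so $|\tau-\tau'|\le \sigma_X\widehat{\norm{\beta_\cdot}}$, which combined with the trivial bound $W_2\le$ sum of means' magnitudes could be folded in — but the intended route is the exactly-matching-variance one). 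The common additive constant $\sum_{s'}\hat p_{s'}\abrace{\hat\beta'_{s'},\hat\mu'_{s'}}$ cancels in the mean difference. Hence $W_2(\nu_{\hat f_n|s},\nu_{\hat f_n|s'}) = |\widehat{\norm{\beta_\cdot}}|\cdot|\abrace{\tilde\beta_s,\mu_s-\hat\mu_s} - \abrace{\tilde\beta_{s'},\mu_{s'}-\hat\mu_{s'}}|$, and the triangle inequality bounds the second factor by $|\abrace{\tilde\beta_s,\mu_s-\hat\mu_s}| + |\abrace{\tilde\beta_{s'},\mu_{s'}-\hat\mu_{s'}}| \le 2(|\abrace{\tilde\beta_s,\mu_s-\hat\mu_s}| \lor |\abrace{\tilde\beta_{s'},\mu_{s'}-\hat\mu_{s'}}|)$.

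It remains to bound $\widehat{\norm{\beta_\cdot}}$ by $B$. Since $\widehat{\norm{\beta_\cdot}} = \sum_{s}\hat p_s\widehat{\norm{\beta_s}}$ with $\sum_s\hat p_s = 1$, it suffices that $\widehat{\norm{\beta_s}}\le B$ for each $s$; but $\widehat{\norm{\beta_s}}$ is either $0$ or $\norm{\hat\beta_{1,s}}$, which is \emph{not} deterministically bounded by $B$. The honest statement is therefore that the bound holds almost surely only if one first truncates $\widehat{\norm{\beta_s}}$ at $B$ (consistent with $\max_s\norm{\beta^*_s}\le B$ from \cref{eq:rist-beta}), or the claimed "almost surely" should be read together with an implicit projection step in the estimator definition; I would insert the clipping $\widehat{\norm{\beta_s}}\coloneqq \norm{\hat\beta_{1,s}}\land B$ into the estimator (harmless for all later error bounds since it can only decrease the deviation from the true norm $\le B$) and then the bound $\widehat{\norm{\beta_\cdot}}\le B$ is immediate, giving the factor $2B$. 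The main obstacle is thus not any hard inequality but the bookkeeping around the degenerate event $\{\exists s:\tilde\beta_s=0\}$ and the need for the clipping to make "almost surely" literally true; modulo that, the proof is a one-line Gaussian $W_2$ computation followed by a triangle inequality.
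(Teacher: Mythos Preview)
Your approach is essentially identical to the paper's: identify the conditional law $\nu_{\hat f_n|s}$ as Gaussian with mean $\widehat{\norm{\beta_\cdot}}\,\abrace{\tilde\beta_s,\mu_s-\hat\mu_s}+\text{const}$ and variance $\sigma_X^2\widehat{\norm{\beta_\cdot}}^2$, apply the closed-form $W_2$ between univariate Gaussians so that the (equal) variances cancel, then use the triangle inequality on the mean difference together with $\widehat{\norm{\beta_\cdot}}\le B$. The paper carries out precisely these steps and, notably, silently uses $\widehat{\norm{\beta_\cdot}}\le B$ without any clipping in the estimator definition and without discussing the degenerate event $\tilde\beta_s=0$; your diagnosis that the ``almost surely'' claim literally requires a truncation such as $\widehat{\norm{\beta_s}}\coloneqq\norm{\hat\beta_{1,s}}\land B$ is correct and applies to the paper's own proof as well.
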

\begin{theorem}\label{thm:proj-mean}
 If $n \ge (48\ln(M/\delta) - 36d)/\min_sp_s$, we have for $\delta \in (0,1)$,
 \begin{align}
     \p\cbrace*{\exists s \in [M], \abs*{\abrace*{\tilde\beta_s,\mu_{s} - \hat\mu_{s}}} > \sigma_X\sqrt{\frac{48\ln(M/\delta)}{\min_snp_s + 36d}}} \le \delta.
 \end{align}
\end{theorem}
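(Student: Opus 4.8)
The plan is to reduce the claim to a one-dimensional Gaussian tail bound obtained after conditioning on the group histogram $n_\cdot$, and then to integrate out the randomness of $n_\cdot$ with a Binomial moment-generating-function estimate together with the cutoff $n_s > 18d$ that is built into $\tilde\beta_s$. First, fix $s \in [M]$ and condition on $n_\cdot$ (equivalently on $S_1,\dots,S_n$) and on $\tilde\beta_s$. By construction $\tilde\beta_s$ depends only on $D_{2,s}$ and $\hat\mu_s$ only on $D_{3,s}$, which are disjoint subsets of the i.i.d.\ group sample $D_s$, so they are conditionally independent given $n_\cdot$. If $n_s \le 18d$ then $\tilde\beta_s = 0$, so $\abrace{\tilde\beta_s,\mu_s-\hat\mu_s} = 0$ and that $s$ cannot trigger the bad event; hence assume $n_s > 18d$, so $\norm{\tilde\beta_s} = 1$ almost surely. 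Writing $\hat\mu_s - \mu_s = \frac1{n_{3,s}}\sum_{i=1}^{n_{3,s}}(X_{3,s,i}-\mu_s)$ with $X_{3,s,i}-\mu_s \iidsim N(0,\sigma_X^2 I)$ conditionally on $n_\cdot$ and independent of $\tilde\beta_s$, the scalar $\abrace{\tilde\beta_s,\mu_s-\hat\mu_s}$ is, conditionally on $(n_\cdot,\tilde\beta_s)$, centered Gaussian with variance $\sigma_X^2/n_{3,s}$, so the standard sub-Gaussian tail bound gives $\p\cbrace{\abs{\abrace{\tilde\beta_s,\mu_s-\hat\mu_s}} > t \mid n_\cdot,\tilde\beta_s} \le 2\exp(-n_{3,s}t^2/(2\sigma_X^2))$ on $\{n_s>18d\}$, for every $t>0$.

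Second, set $t^\star := \sigma_X\sqrt{48\ln(M/\delta)/(np_{\min}+36d)}$ with $p_{\min} := \min_s p_s$, so that this exponent equals $c\,n_{3,s}$ with $c := 24\ln(M/\delta)/(np_{\min}+36d)$; taking expectation over $n_\cdot$ bounds the per-$s$ failure probability by $\Mean\bracket{2\exp(-c\,n_{3,s})\mathbf{1}\{n_s>18d\}}$. Here I would use two lower bounds on $n_{3,s}\ge\floor{n_s/3}$: the deterministic $\floor{n_s/3}\ge (n_s-2)/3$ and the fact that $\floor{n_s/3}\ge 6d$ on $\{n_s>18d\}$; interpolating, $n_{3,s}\ge 3d + (n_s-2)/6$ there. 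Splitting the exponent accordingly and bounding $\exp(-c(n_s-2)/6)\mathbf{1}\{n_s>18d\}$ by $\exp(c/3)\exp(-cn_s/6)$, the residual expectation $\Mean[\exp(-cn_s/6)]$ is controlled by the Binomial moment generating function $\Mean[\exp(-\lambda n_s)] \le \exp(-np_s(1-e^{-\lambda})) \le \exp(-np_s\lambda/2)$, legitimately since the hypothesis $np_{\min}+36d \ge 48\ln(M/\delta)$ forces $c\le\tfrac12$. Up to a universal constant this yields $\exp(-3dc - np_{\min}c/12) = \exp(-c(np_{\min}+36d)/12) = \exp(-2\ln(M/\delta)) = (\delta/M)^2$, and a union bound over the at most $M$ groups with $n_s > 18d$ closes at $\delta$; the numerical constants $18$, $3$, and $6$ above are exactly what make the $36d$ term appear in the denominator.

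The main obstacle is this integration over $n_\cdot$. The conditioning argument and the one-dimensional Gaussian tail are routine, but integrating that conditional tail against the law of $n_{3,s}$ — which is governed by $n_s \sim \mathrm{Binomial}(n,p_s)$ — while simultaneously extracting from the estimator's $n_s > 18d$ cutoff the additive $36d$ term in the denominator requires the exponent split sketched above and careful bookkeeping of the constants so that the threshold comes out exactly $t^\star$ with failure probability at most $\delta$. The hypothesis $n \ge (48\ln(M/\delta)-36d)/p_{\min}$ enters precisely to keep the effective tail parameter $c$ at most $\tfrac12$, so that the linearized bound $1-e^{-\lambda}\ge\lambda/2$ may be used cleanly.
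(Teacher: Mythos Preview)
Your proposal is correct and follows essentially the same route as the paper: condition on $n_\cdot$ and $\tilde\beta_s$ to get a one-dimensional Gaussian tail, then integrate out $n_s$ via the Binomial moment generating function together with the linearization $1-e^{-\lambda}\ge c\lambda$, using the built-in cutoff $n_s>18d$ to generate the additive $36d$ term. The only stylistic difference is how the cutoff is cashed in: the paper first bounds $n_{3,s}\ge n_s/6$ and then uses the clean inequality $\mathbf{1}\{n_s>18d\}\exp(-an_s)\le \exp(-a(n_s+18d)/2)$ (since $n_s\ge (n_s+18d)/2$ on that event), which makes the constants land exactly on $\delta$ without the ``up to a universal constant'' caveat you carry from your interpolation $n_{3,s}\ge 3d+(n_s-2)/6$.
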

Combining \cref{thm:wd-g,thm:proj-mean} immediately yields \cref{thm:consist-bound}.
\begin{proof}[Proof of \cref{thm:wd-g}]
 This proof investigates the distribution of $\nu_{\hat{f}{n}|s}$. It is straightforward to verify that, conditioned on $S=s$, $\hat{f}_n(X,S)$ follows the Gaussian distribution with mean
 \begin{align}
     \widehat{\norm{\beta_\cdot}}\abrace*{\tilde\beta_s,\mu_s - \hat\mu_s} + \sum_{s'\in[M]}\hat{p}_{s'}\abrace*{\hat\beta'_{s'},\hat\mu'_{s'}},
 \end{align}
 and variance
 \begin{align}
     \sigma_X^2\widehat{\norm{\beta_\cdot}}^2.
 \end{align}
 We can thus evaluate the Wasserstein distance between the distributions $\nu_{\hat{f}{n}|s}$ and $\nu{\hat{f}{n}|s'}$ using the Wasserstein distance between Gaussian distributions. Given two Gaussian distributions $N(\mu,\sigma^2)$ and $N(\mu',\sigma'^{2})$, the 2-Wasserstein distance between them are obtained~\autocite{Olkin1982TheMatrices} as
 \begin{align}
     W_2^2(N(\mu,\sigma^2),N(\mu',\sigma^2)) = \paren*{\mu - \mu'}^2 + \paren*{\sigma - \sigma'}^2.
 \end{align}
Therefore, we have
 \begin{align}
    W^2_2(\nu_{\hat{f}_{n}|s},\nu_{\hat{f}_{n}|s'}) =&  \widehat{\norm{\beta_\cdot}}^2\paren*{\abrace*{\tilde\beta_s,\mu_s - \hat\mu_s} - \abrace*{\tilde\beta_{s'},\mu_{s'} - \hat\mu_{s'}}}^2 \\
    \le& 4B^2\paren*{\abs*{\abrace*{\tilde\beta_s,\mu - \hat\mu}}\lor\abs*{\abrace*{\tilde\beta_{s'},\mu - \hat\mu}}}^2,
 \end{align}
 which concludes the claim.
\end{proof}
\begin{proof}[Proof of \cref{thm:proj-mean}]
We start by deriving the concentration inequality for $\abrace{\tilde\beta_s,\mu_s - \hat\mu_s}$ conditioned on $\tilde\beta_s$ and $n_\cdot$. Note that $\tilde\beta_s = 0$ if $n_s \le 18d$. Conditioning on $\tilde\beta_s$ and $n_\cdot$, we observe that $\abrace{\tilde\beta_s,\mu_s - \hat\mu_s}$ follows a Gaussian distribution with mean zero and variance $\sigma^2_X/n_{3,s}$. Therefore, for any $s \in [M]$ and $t > 0$,
 \begin{align}
     \p\cbrace*{\abrace*{\tilde\beta_s,\mu_s - \hat\mu_s} > t \middle| \tilde\beta_s,n_\cdot} \le \ind\cbrace*{n_s > 18d}\exp\paren*{-\frac{n_{3,s}t^2}{2\sigma^2_X}}.
 \end{align}
 Taking the expectation with respect to $\tilde\beta_s$ and using the fact that $n_{3,s} \ge \floor{n_s/3} \ge n_s/6$ for $n_s \ge 6$, we obtain the following inequality for $s \in [M]$ and $t > 0$:
 \begin{align}
     \p\cbrace*{\abrace*{\tilde\beta_s,\mu_s - \hat\mu_s} > t \middle| n_\cdot} \le \ind\cbrace*{n_s > 18d}\exp\paren*{-\frac{n_st^2}{12\sigma^2_X}}.
 \end{align}
 Using the union bound for $t > 0$, we have
 \begin{align}
     \p\cbrace*{\exists s \in [M], \abrace*{\tilde\beta_s,\mu_s - \hat\mu_s} > t \middle| n_\cdot} \le \sum_{s \in [M]}\ind\cbrace*{n_s > 18d}\exp\paren*{-\frac{n_st^2}{12\sigma^2_X}}. \label{eq:proj-mean-bound1}
 \end{align}
 
 We now derive a sufficient condition on $t$ such that the expectation of the right-hand side in \cref{eq:proj-mean-bound1} is less than $\delta$. First, we note that
 \begin{align}
     & \ind\cbrace*{n_s > 18d}\exp\paren*{-\frac{n_st^2}{12\sigma^2_X}} \\
     \le& \ind\cbrace*{n_s > 18d}\exp\paren*{-\frac{(n_s+18d)t^2}{12\sigma^2_X}\frac{n_s}{n_s+18d}} \\
     \le& \exp\paren*{-\frac{(n_s+18d)t^2}{24\sigma^2_X}}
 \end{align}
 Taking the expectation and substituting the moment-generating function of the binomial distribution, we obtain
 \begin{align}
     & \Mean\bracket*{\exp\paren*{-\frac{(n_s+18d)t^2}{24\sigma^2_X}}} \\
     \le& \exp\paren*{-\frac{18dt^2}{24\sigma^2_X}}\paren*{1-p_s + p_se^{-\frac{t^2}{24\sigma^2_X}}}^n \\
     \le&\exp\paren*{-\frac{18dt^2}{24\sigma^2_X}-np_s\paren*{1-e^{-\frac{t^2}{24\sigma^2_X}}}}.
 \end{align}
 Since $1-e^{-x} \ge (1-e^{-1})x$ for $x \in [0,1]$, if $t^2/24\sigma^2_X \le 1$, we have
 \begin{align}
     & \Mean\bracket*{\exp\paren*{-\frac{(n_s+18d)t^2}{24\sigma^2_X}}} \\
     \le&\exp\paren*{-\frac{18dt^2}{24\sigma^2_X}-\frac{(1-e^{-1})np_st^2}{24\sigma^2_X}}.
 \end{align}
 Hence, $\Mean\bracket{\exp\paren{-\frac{(n_s+18d)t^2}{24\sigma^2_X}}} \le \nicefrac{\delta}{M}$ if $t \ge \sigma_X\sqrt{\frac{24\ln(M/\delta)}{(1-e^{-1})np_s + 18d}} \le \sigma_X\sqrt{\frac{48\ln(M/\delta)}{\min_snp_s + 36d}}$ because $(1-e^{-1}) \ge 1/2$. To ensure $t^2/24\sigma^2_X \le 1$, we require $n \ge (48\ln(M/\delta) - 36d)/\min_sp_s$.
\end{proof}

\section{Proofs for Norm and Direction Estimators}
This section presents the proofs for \cref{thm:err-unit-beta} and \cref{thm:err-norm}. Our strategy for proving these theorems is to use the hyperellipsoid to interpret the distribution of the OLS estimator. Specifically, we begin by defining $\Sigma_n = \frac{1}{n}X^{\top}X$ and expressing the OLS estimator $\hat{\beta}$ as
\begin{align}
    \hat\beta = \beta^* + \Sigma_n^{-1}\paren*{\frac{1}{n}X^{\top}\xi}, \label{eq:ols-est}
\end{align}
where $\xi$ follows a zero-mean Gaussian distribution. \cref{eq:ols-est} shows that, conditioned on $X$, $\hat\beta$ follows a multivariate Gaussian distribution with mean $\beta^*$ and covariance matrix $\frac{\sigma^2_\xi}{n}\Sigma_n^{-1}$. We establish that, under the condition $\norm{\hat\beta} = r$, $\hat\beta$ is supported on a hyperellipsoid $E(r,\beta^, \frac{n}{\sigma^2_\xi}\Sigma_n)$, where $E(r,c,A) = \cbrace{x \in \RealSet^d : \paren{x - c}^{\top}A\paren{x-c} \le r}$ denotes the hyperellipsoid with $r > 0$, $c \in \RealSet^d$, and a symmetric and positive-definite matrix $A \in \RealSet^{d\times d}$.

To prove \cref{thm:err-unit-beta} and \cref{thm:err-norm}, we adopt the following strategy. First, we provide an approximation of the hyperellipsoid $E(r,c,A)$ using the maximum eigenvalue of $A^{-1}$, i.e., $\lambda_{\max}(A^{-1})$. In our context, $A=\frac{n}{\sigma^2_\xi}\Sigma_n$, and we then focus on the concentration inequalities regarding $\lambda_{\max}(\Sigma_n^{-1})$. Finally, we combine these tools to prove both theorems.

\parahead{Lemmas regarding hyperellipsoid}
We present two lemmas that relate to the approximation of the hyperellipsoid $E(r,c,A)$. Specifically, we demonstrate the following two lemmas:
\begin{lemma}\label{lem:ell-sph}
  For $r > 0$, $c \in \RealSet^d$, and a symmetric and positive-definite matrix $A \in \RealSet^{d\times d}$, we have $E(r,c,A) \subseteq E(r\lambda_{\max}(A^{-1}),c,I)$.
\end{lemma}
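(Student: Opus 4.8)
The plan is to reduce the set containment to a single Rayleigh-quotient inequality. Fix an arbitrary $x \in E(r,c,A)$ and write $y = x - c$, so that by definition $y^{\top}Ay \le r$. Membership $x \in E(r\lambda_{\max}(A^{-1}),c,I)$ is, after the same substitution, exactly the assertion $\norm{y}^2 \le r\lambda_{\max}(A^{-1})$, so it suffices to prove this scalar inequality for every $y$ with $y^{\top}Ay \le r$.

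First I would record that, since $A$ is symmetric and positive definite, $A^{-1}$ exists, is symmetric and positive definite, and its largest eigenvalue is the reciprocal of the smallest eigenvalue of $A$; that is, $\lambda_{\max}(A^{-1}) = 1/\lambda_{\min}(A)$ with $\lambda_{\min}(A) > 0$. Next I would apply the Courant--Fischer (Rayleigh quotient) bound $y^{\top}Ay \ge \lambda_{\min}(A)\norm{y}^2$, valid for all $y \in \RealSet^d$. Chaining these gives $\norm{y}^2 \le y^{\top}Ay / \lambda_{\min}(A) \le r/\lambda_{\min}(A) = r\lambda_{\max}(A^{-1})$, which is precisely what is needed; since $x$ was arbitrary, the containment follows. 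An equivalent route, avoiding the eigenvalue-reciprocal identity, is to introduce the symmetric square root and set $z = A^{1/2}y$, whence $\norm{y}^2 = z^{\top}A^{-1}z \le \lambda_{\max}(A^{-1})\norm{z}^2 = \lambda_{\max}(A^{-1})\,y^{\top}Ay \le r\lambda_{\max}(A^{-1})$.

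There is essentially no technical obstacle in this lemma; it is a one-line spectral estimate. The only points requiring a moment's care are that positive-definiteness of $A$ is genuinely used — it guarantees $\lambda_{\min}(A) > 0$ so the division is legitimate, and it ensures $A^{-1}$ and $\lambda_{\max}(A^{-1})$ are well defined — and that the hyperellipsoid here is defined with the quadratic form bounded by $r$ rather than $r^2$, so no square roots enter and the factor that appears is $\lambda_{\max}(A^{-1})$ rather than its square root.
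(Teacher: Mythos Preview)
Your proof is correct and follows essentially the same approach as the paper: both use the Rayleigh-quotient bound $y^{\top}Ay \ge \lambda_{\min}(A)\norm{y}^2$ together with $\lambda_{\max}(A^{-1}) = 1/\lambda_{\min}(A)$ to conclude. Your write-up is in fact slightly cleaner, since you make the substitution $y = x - c$ explicit, whereas the paper's notation suppresses the center $c$.
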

\begin{lemma}\label{lem:ell-angle}
 For $r > 0$, $c \in \RealSet^d$, and a symmetric and positive-definite matrix $A \in \RealSet^{d\times d}$, if $r\lambda_{\max}(A^{-1}) \le \norm{c}^2$, we have
 \begin{align}
     \inf_{x \in E(r,c,A)}\abrace*{\frac{c}{\norm{c}}, \frac{x}{\norm{x}}} \ge \sqrt{1 - \frac{r}{\norm{c}^2}\lambda_{\max}(A^{-1})}.
 \end{align}
\end{lemma}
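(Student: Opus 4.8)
The plan is to reduce the hyperellipsoid to a circumscribing Euclidean ball using \cref{lem:ell-sph}, and then dispatch the ball case by a one-line optimization. First I would set $r' = r\lambda_{\max}(A^{-1})$, so that the hypothesis reads $r' \le \norm{c}^2$. By \cref{lem:ell-sph}, $E(r,c,A) \subseteq E(r',c,I) = \cbrace{x \in \RealSet^d : \norm{x-c}^2 \le r'}$, and since an infimum over a larger set can only be smaller, it suffices to lower bound $\inf_{x : \norm{x-c}^2 \le r'} \abrace*{c/\norm{c},\, x/\norm{x}}$.

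Next, for any $x$ with $\norm{x-c}^2 \le r'$, I would expand the square to obtain $2\abrace*{x,c} \ge \norm{x}^2 + \norm{c}^2 - r'$. Since $r' \le \norm{c}^2$, the right side is at least $\norm{x}^2$, so $\abrace*{x,c} \ge 0$, and it is strictly positive (hence $\norm{x} > 0$) whenever $x \ne 0$; the only possible exception is $x = 0$, which can occur only when $r' = \norm{c}^2$, in which case the claimed bound is merely $\ge 0$ and already follows. Dividing by $\norm{x}\norm{c} > 0$ and substituting $t = \norm{x}/\norm{c}$ and $a = 1 - r'/\norm{c}^2 \ge 0$ gives
\[
 \abrace*{\frac{c}{\norm{c}},\, \frac{x}{\norm{x}}} \;\ge\; \frac{1}{2}\paren*{t + \frac{a}{t}} \;\ge\; \sqrt{a},
\]
where the last step is AM--GM, valid for every $t > 0$. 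Taking the infimum over $x$ then yields the bound $\sqrt{a} = \sqrt{1 - r'/\norm{c}^2} = \sqrt{1 - r\lambda_{\max}(A^{-1})/\norm{c}^2}$, as claimed.

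There is no serious obstacle here: this is a short computation rather than a hard proof. The two points that need care are (i) getting the direction of the inequality right when passing from $E(r,c,A)$ to the circumscribing ball through \cref{lem:ell-sph} — the infimum over the \emph{larger} set is precisely what lower-bounds the quantity of interest — and (ii) checking that $\norm{x}$ and $\abrace*{x,c}$ are nondegenerate on the ball, which is exactly where the hypothesis $r\lambda_{\max}(A^{-1}) \le \norm{c}^2$ enters and which also rules out division by zero. Geometrically the estimate is sharp: equality corresponds to the ray from the origin being tangent to the sphere $\norm{x-c} = \sqrt{r'}$, for which the sine of the angle between $x$ and $c$ equals $\sqrt{r'}/\norm{c}$.
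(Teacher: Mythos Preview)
Your proof is correct and shares with the paper the key first step: invoking \cref{lem:ell-sph} to replace $E(r,c,A)$ by the circumscribing Euclidean ball $E(r',c,I)$ with $r'=r\lambda_{\max}(A^{-1})$, so that it suffices to lower bound $\abrace*{c/\norm{c},x/\norm{x}}$ over that ball. The two arguments diverge in how the ball case is handled. The paper introduces the projective set $\bar{E}(r,c,A)=\cbrace{x\in\mathbb{S}_{d-1}:\exists\gamma>0,\ \gamma x\in E(r,c,A)}$, minimizes the quadratic $(\gamma x-c)^\top A(\gamma x-c)$ in $\gamma$ to obtain an explicit membership condition, and then specializes to $A=I$ to read off $(\abrace{x,\bar{c}}\lor 0)^2\ge 1-r'/\norm{c}^2$. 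You instead expand $\norm{x-c}^2\le r'$ directly to $2\abrace{x,c}\ge\norm{x}^2+\norm{c}^2-r'$ and finish with AM--GM on $t+a/t$. Your route is more elementary and a touch shorter; the paper's route has the advantage of giving, as a byproduct, the membership condition \cref{eq:ell-angle-cond2} for the cone of directions meeting a general ellipsoid, though this extra generality is not used elsewhere. Both approaches identify the same extremal geometry (tangent ray from the origin to the sphere $\norm{x-c}=\sqrt{r'}$) and give the same sharp bound.
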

These lemmas provide insight into the approximation of the hyperellipsoid $E(r,c,A)$ for a given positive value of $r$, vector $c$ in $\RealSet^d$, and positive-definite symmetric matrix $A$ in $\RealSet^{d\times d}$. \cref{lem:ell-sph} states that the hyperellipsoid $E(r,c,A)$ is contained within a hyperellipsoid $E(r\lambda_{\max}(A^{-1}),c,I)$. \cref{lem:ell-angle} shows that, under certain conditions, the minimum angle between a point in $E(r,c,A)$ and the vector $c$ is bounded below by a quantity that depends on $r$, $c$, and $A$.

\begin{proof}[Proof of \cref{lem:ell-sph}]
It is trivial that $A - \lambda_{\min}(A)I$ is positive semi-definite. Equivalently, we have for any $x \in \RealSet^d$,
 \begin{align}
     & x^\top\paren*{A - \lambda_{\min}(A)I}x \ge 0 \\
     \iff& x^\top A x \ge x^\top \lambda_{\min}(A)I x. \label{eq:ell-sph-1}
 \end{align}
 From \cref{eq:ell-sph-1}, for any $x \in E(r,c,A)$, we have
 \begin{align}
     x^\top \lambda_{\min}(A)I x \le x^\top A x \le r.
 \end{align}
 Hence, for any $x \in E(r,c,A)$, we have
 \begin{align}
     x^\top I x \le \frac{r}{\lambda_{\min}(A)} = \lambda_{\max}(A^{-1})r,
 \end{align}
 which indicates $x \in E(r\lambda_{\max}(A^{-1}),c,I)$.
\end{proof}
\begin{proof}[Proof of \cref{lem:ell-angle}]
Let $\bar{c} = c/\norm{c}$, and define a set $\bar{E}(r,c,A) = \cbrace{x \in \mathbb{S}{d-1}: \exists \gamma > 0, \gamma x \in E(r,c,A)}$. Then, $x \in \bar{E}(r,c,A)$ if and only if
 \begin{align}
     \inf_{\gamma > 0}\paren*{\gamma x - c}^{\top}A\paren*{\gamma x-c} \le r. \label{eq:ell-angle-cond}
 \end{align}
 We can rewrite the left-hand side of \cref{eq:ell-angle-cond} as
 \begin{align}
     & \gamma^2 \abrace*{x, A x} - 2\gamma\abrace*{c, A x} + \abrace*{c, A c} \\
     =&  \abrace*{x, A x}\paren*{\gamma - \frac{\abrace*{c, A x}}{\abrace*{x, A x}}}^2 + \abrace*{c, A c} - \frac{\abrace{x,Ac}^2}{\abrace{x,Ax}}.
 \end{align}
 Hence, 
 \begin{align}
     \inf_{\gamma > 0}\paren*{\gamma x - c}^{\top}A\paren*{\gamma x-c} = \abrace*{c, A c} - \frac{\paren{\abrace{x,Ac}\lor 0}^2}{\abrace{x,Ax}}.
 \end{align}
 Consequently, $x \in \bar{E}(r,c,A)$ if and only if
 \begin{align}
    \abrace{x,Ax}\paren*{\abrace*{\bar{c},A\bar{c}} - \frac{r}{\norm{c}^2}} \le \paren*{\abrace{x,A\bar{c}}\lor 0}^2. \label{eq:ell-angle-cond2}
 \end{align}
 
 From \cref{lem:ell-sph}, we have
 \begin{align}
     \inf_{x \in E(r,c,A)}\abrace*{\bar{c}, \frac{x}{\norm{x}}} \ge& \inf_{x \in E(\lambda_{\max}(A^{-1})r,c,I)}\abrace*{\bar{c}, \frac{x}{\norm{x}}} \\
     =& \inf_{x \in \bar{E}(\lambda_{\max}(A^{-1})r,c,I)}\abrace*{\bar{c}, x}. \label{eq:ell-angle-lower}
 \end{align}
 By \cref{eq:ell-angle-cond2}, $x \in \bar{E}(\lambda_{\max}(A^{-1})r,c,I)$ if and only if
 \begin{align}
     1 - \frac{r}{\norm{c}^2}\lambda_{\max}(A^{-1}) \le \paren*{\abrace{x,\bar{c}}\lor 0}^2. \label{eq:ell-angle-cond3}
 \end{align}
 Combining \cref{eq:ell-angle-lower,eq:ell-angle-cond3} and the assumption yields the claim.
\end{proof}

\parahead{Least eigenvalue of the empirical covariance matrix}
The previous lemmas, \cref{lem:ell-sph,lem:ell-angle}, provide valuable insight into analyzing the randomness regarding $\xi$. However, to account for the randomness of $X$, we must also control the lower bound on the least eigenvalue of $A$ in \cref{lem:ell-sph,lem:ell-angle}, which corresponds to the least eigenvalue of $\frac{1}{n}X^\top X$ in our context. To this end, we leverage the high probability bound presented by \textcite{Mourtada2022ExactMatrices} based on the small-ball condition. We state the following probabilistic bound and expectation bound.
\begin{lemma}\label{lem:min-eigvalue-sig}
    For $\mu \in \RealSet^d$ and $\sigma^2_X > 0$, let $X_1,...,X_n \iidsim N(\mu,\sigma^2_XI)$, and let $X=(X_1 \cdots X_n)^{\top}$. Then, for $n > 6d$, we have
    \begin{align}
        \p\cbrace*{\lambda_{\min}\paren*{\frac{1}{n}X^{\top}X} < t} \le \paren*{\frac{21e^{10}}{\sigma^2_X}t}^{\nicefrac{n}{6}}.
    \end{align}
\end{lemma}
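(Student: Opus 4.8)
The plan is to obtain \cref{lem:min-eigvalue-sig} as an instance of the small-ball / PAC-Bayesian lower-tail bound for sample covariance matrices of \textcite{Mourtada2022ExactMatrices}, whose sole input is a uniform anti-concentration estimate for the one-dimensional marginals of $X$. The first step is to record this estimate and, crucially, to note that it is insensitive to the mean $\mu$. For any unit vector $u \in \mathbb{S}_{d-1}$, the variable $\abrace{X_i,u}$ is Gaussian with mean $\abrace{\mu,u}$ and variance $\sigma^2_X$, so $\abrace{X_i,u}/\sigma_X$ has a density bounded by $1/\sqrt{2\pi}$ irrespective of $\mu$; hence, for every $\epsilon > 0$,
\[
  \p\cbrace*{\abs{\abrace{X_i,u}} \le \sqrt{\epsilon}\,\sigma_X} \le \sqrt{\tfrac{2\epsilon}{\pi}}.
\]
Writing $\Sigma = \Mean[X_iX_i^\top] = \sigma^2_XI + \mu\mu^\top$, so that $\lambda_{\min}(\Sigma) = \sigma^2_X$, this is precisely the small-ball hypothesis $\p\{\abrace{X_i,u}^2 \le \epsilon\lambda_{\min}(\Sigma)\} \le \sqrt{2\epsilon/\pi}$ required by Mourtada's theorem, with a universal constant.

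The second step records the reduction to the mean-zero case, which both makes the application of Mourtada's estimate transparent and shows a nonzero mean can only help. Writing $X_i = \mu + \sigma_X g_i$ with $g_i \iidsim N(0,I)$ and completing the square in $\abrace{\mu,u}$ gives, for every unit $u$,
\[
  \tfrac1n\sum_{i=1}^{n}\abrace{X_i,u}^2 = \bigl(\abrace{\mu,u}+\sigma_X\bar h_u\bigr)^2 + \sigma^2_X\,\tfrac1n\sum_{i=1}^{n}\bigl(\abrace{g_i,u}-\bar h_u\bigr)^2 \ge \sigma^2_X\, u^\top C\, u,
\]
where $\bar h_u = \tfrac1n\sum_i\abrace{g_i,u}$ and $C = \tfrac1n\sum_i (g_i-\bar g)(g_i-\bar g)^\top$ with $\bar g = \tfrac1n\sum_i g_i$. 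Thus $\lambda_{\min}(\tfrac1nX^\top X) \ge \sigma^2_X\lambda_{\min}(C)$ almost surely, and since the centred Gram $C$ is distributed as $\tfrac1n H^\top H$ for a $(n-1)\times d$ matrix $H$ with i.i.d.\ $N(0,1)$ entries (apply the projection onto the orthogonal complement of the all-ones vector in $\RealSet^n$), it suffices to control the lower tail of $\lambda_{\min}$ for a standard-Gaussian Gram matrix. Feeding the small-ball estimate into Mourtada's lower-tail bound — the hypothesis $n > 6d$ being exactly what produces a clean exponent — yields $\p\{\lambda_{\min}(\tfrac1n X^\top X) < t\} \le (c_0 t/\sigma^2_X)^{n/6}$, and chasing the universal constant through his argument gives $c_0 \le 21e^{10}$.

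I expect the genuinely hard part to be precisely this last implication: passing from a one-dimensional small-ball estimate to a \emph{dimension-free} lower-tail bound on $\lambda_{\min}$. A naive $\epsilon$-net over $\mathbb{S}_{d-1}$ costs a factor that is super-exponential in $d$ (roughly $d^{d/2}$), which no constant-base exponential can absorb; the role of the PAC-Bayesian argument underlying the cited theorem is to replace the net by a Gaussian smoothing of the search direction, paying only a Kullback--Leibler term of order $d/n < 1/6$ for the supremum over $u$. Everything else is elementary: the anti-concentration estimate is immediate, the centring identity is algebra, and the only delicate point is the bookkeeping of the constants ($21$, $e^{10}$) and of the harmless $n \mapsto n-1$ shift introduced by centring, both of which are swallowed by the universal constant in the final bound.
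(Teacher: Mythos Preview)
Your route differs from the paper's. The paper applies Mourtada's Corollary~3 \emph{directly} to the non-centred Gaussian, so the work lies in verifying the small-ball condition $\p\cbrace{\abrace{\theta,X}^2 \le t^2\norm{\Sigma^{1/2}\theta}^2} \le (Kt)^\alpha$ with a \emph{universal} $K$. Since $\norm{\Sigma^{1/2}\theta}^2 = \sigma_X^2\norm{\theta}^2 + \abrace{\theta,\mu}^2$, your density bound alone yields only $K = \sqrt{(2/\pi)(1+\abrace{u,\mu}^2/\sigma_X^2)}$, which is not uniform in $\mu$; the paper overcomes this by carefully bounding the non-central $\chi^2$ CDF, showing that the Gaussian tail cancels the $\sqrt{1+\lambda^2}$ growth and obtaining $K = 7^{1/4}$, whence $3K^4e^{10} = 21e^{10}$. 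Your phrasing of the hypothesis with $\lambda_{\min}(\Sigma)$ in place of $\norm{\Sigma^{1/2}\theta}^2$ is not what Mourtada's corollary actually assumes.

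Your step~2 is a legitimate way around this: the identity $\frac{1}{n}\sum_i\abrace{X_i,u}^2 \ge \sigma_X^2\, u^\top C\, u$ is correct, and after projecting out the mean one lands in the isotropic case where the naive density bound \emph{does} verify Mourtada's hypothesis with constant $\sqrt{2/\pi}$ (giving a base $12e^{10}/\pi^2 < 21e^{10}$, so the constant is even comfortable). But the price is one degree of freedom: $nC$ is distributed as $H^\top H$ with $H \in \RealSet^{(n-1)\times d}$, so Mourtada applied to $n-1$ samples yields exponent $(n-1)/6$, not $n/6$, and requires $n-1 > 6d$. Contrary to your last sentence, this shift in the exponent is \emph{not} absorbed by any constant in the base: for fixed $a,b > 0$ one has $(at)^{(n-1)/6}/(bt)^{n/6} = a^{(n-1)/6}b^{-n/6}\,t^{-1/6} \to \infty$ as $t \downarrow 0$. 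Your route therefore proves a slightly weaker bound---still sufficient for \cref{lem:exp-max-eig-inv} and everything downstream after cosmetic changes---but not the lemma exactly as stated.
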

\begin{lemma}\label{lem:exp-max-eig-inv}
    For $\mu \in \RealSet^d$ and $\sigma^2_X > 0$, let $X_1,...,X_n \iidsim N(\mu,\sigma^2_XI)$, and let $X=(X_1 \cdots X_n)^{\top}$. Then, for $n > 6d$, we have
    \begin{align}
        \Mean\bracket*{\lambda_{\max}\paren*{\paren*{\frac{1}{n}X^{\top}X}^{-1}}} \le \frac{21e^{10}}{\sigma^2_X}\paren*{1 + \frac{6}{n - 6}}.
    \end{align}
\end{lemma}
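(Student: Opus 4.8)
The plan is to reduce the statement to a one-dimensional tail integral. Set $Z = \lambda_{\min}\paren{\tfrac{1}{n}X^{\top}X}$, which is almost surely positive since $n > 6d > d$, so that $\lambda_{\max}\paren{\paren{\tfrac{1}{n}X^{\top}X}^{-1}} = 1/Z$ is a nonnegative random variable whose tail is controlled by \cref{lem:min-eigvalue-sig}: writing $a = 21e^{10}/\sigma^2_X$, we have $\p\cbrace{Z < t} \le (at)^{\nicefrac{n}{6}}$ for every $t > 0$. Applying the layer-cake identity $\Mean\bracket{1/Z} = \int_0^\infty \p\cbrace{1/Z > u}\,du$ and then substituting $t = 1/u$, I would rewrite $\Mean\bracket{1/Z} = \int_0^\infty t^{-2}\,\p\cbrace{Z < t}\,dt$.

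Next I would split this integral at the threshold $t_\star = 1/a = \sigma^2_X/(21e^{10})$, beyond which the bound from \cref{lem:min-eigvalue-sig} is vacuous. On $(0, t_\star]$ I substitute the tail bound to obtain $\int_0^{t_\star} t^{-2}(at)^{\nicefrac{n}{6}}\,dt = a^{\nicefrac{n}{6}}\int_0^{t_\star} t^{\nicefrac{n}{6}-2}\,dt$; since $n > 6d \ge 6$ forces $\nicefrac{n}{6} > 1$, the exponent $\nicefrac{n}{6}-2 > -1$ and this integral converges, evaluating to $\tfrac{6}{n-6}\cdot a$. On $[t_\star,\infty)$ I bound $\p\cbrace{Z < t} \le 1$ and integrate $t^{-2}$ to get $1/t_\star = a$. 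Adding the two contributions yields $\Mean\bracket{1/Z} \le a\paren{1 + \tfrac{6}{n-6}}$, which is exactly the claimed bound after substituting $a = 21e^{10}/\sigma^2_X$.

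The computation is essentially mechanical; the only genuine subtlety is the convergence of the integral near the origin, which is precisely the place where the exponent $\nicefrac{n}{6}$ in \cref{lem:min-eigvalue-sig} — and hence the hypothesis $n > 6d$ — enters, together with the careful bookkeeping of constants so that the resulting bound is sharp rather than merely correct up to a universal factor. An equivalent route that avoids the change of variables is to write $\Mean\bracket{1/Z} = \int_0^\infty \p\cbrace{Z < 1/u}\,du$ directly and split the $u$-integral at $u = 1/t_\star$, which leads to the same two elementary integrals.
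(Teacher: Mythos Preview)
Your proposal is correct and matches the paper's proof almost exactly: the paper applies the layer-cake identity to $\lambda_{\max}\paren{(\tfrac{1}{n}X^\top X)^{-1}}$, rewrites the tail as $\p\cbrace{\lambda_{\min}(\tfrac{1}{n}X^\top X) < t^{-1}}$, splits the integral at $C = 21e^{10}/\sigma_X^2$, bounds by $1$ on $[0,C]$ and by \cref{lem:min-eigvalue-sig} on $[C,\infty)$, and computes the same two elementary integrals. The only cosmetic difference is that you perform the change of variables $t \mapsto 1/t$ before splitting (which the paper does not), but you yourself note at the end that the direct route without this substitution is equivalent --- that is precisely the paper's route.
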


To prove \cref{lem:min-eigvalue-sig}, we utilize Corollary 3 in \textcite{Mourtada2022ExactMatrices}. Specifically, we use the following theorem.
\begin{theorem}[Corollary 3 in \textcite{Mourtada2022ExactMatrices}]\label{thm:eig-inq-tail}
    Let $X$ be a random vector in $\RealSet^d$ such that $\Mean\bracket{\norm{X}^2} < +\infty$, and let $\Sigma = \Mean\bracket{XX^\top}$. Let $\hat\Sigma_n = \frac{1}{n}\sum_{i=1}^n X_iX_i^\top$, where $X_i$ are i.i.d. copies of $X$. Given $C > 0$ and $\alpha \in (0,1]$, assume that for every $\theta \in \RealSet^d\setminus\cbrace{0}$ and $t > 0$, 
    \begin{align}
        \p\cbrace*{\abrace*{\theta, X}^2 \le t^2\norm*{\Sigma^{\nicefrac{1}{2}}\theta}^2} \le (Ct)^\alpha. \label{eq:small-ball}
    \end{align}
    Then, if $\nicefrac{d}{n} \le \nicefrac{\alpha}{6}$, for every $t > 0$,
    \begin{align}
        \hat\Sigma_n \succeq t\Sigma
    \end{align}
    with probability at least $1-(C't)^{\nicefrac{\alpha n}{6}}$, where $C' = 3C^4e^{1+\nicefrac{9}{\alpha}}$.
\end{theorem}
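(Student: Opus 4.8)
The plan is to follow the \emph{small-ball method}, with a PAC-Bayesian argument replacing the usual covering-number union bound so that the tail exponent loses only a constant factor in $n$. First I would reduce to the isotropic case. Since $\Mean\bracket{\abrace*{v,X}^2} = v^\top\Sigma v = 0$ forces $\abrace*{v,X}=0$ almost surely, $X$ lies in $V := \mathrm{range}(\Sigma)$, on which $\Sigma$ is positive definite; setting $Z = \Sigma^{-1/2}X$ (within $V$) makes $\Mean\bracket{ZZ^\top}$ the identity of $V$, turns \cref{eq:small-ball} into $\p\cbrace*{\abrace*{\theta, Z}^2 \le s^2} \le (Cs)^\alpha$ for every unit $\theta \in V$ and every $s > 0$, and turns the conclusion $\hat\Sigma_n \succeq t\Sigma$ into the uniform lower-deviation statement $\inf_{\theta \in \mathbb{S}_{d-1}\cap V}\frac1n\sum_{i=1}^n\abrace*{Z_i,\theta}^2 \ge t$. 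So everything reduces to bounding the probability that this infimum drops below $t$, with the effective dimension now $\mathrm{rank}(\Sigma) \le d$.

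Second comes the single-direction estimate. Fix a unit $\theta$ and a truncation level $s$ with $s^2 = 2t$. The quadratic form dominates a Bernoulli average, $\frac1n\sum_i\abrace*{Z_i,\theta}^2 \ge s^2\cdot\frac1n\sum_i\ind\cbrace*{\abrace*{Z_i,\theta}^2 \ge s^2}$, so $\frac1n\sum_i\abrace*{Z_i,\theta}^2 < t$ forces more than $n/2$ of the i.i.d.\ indicators $\ind\cbrace*{\abrace*{Z_i,\theta}^2 < s^2}$, each Bernoulli with mean at most $(Cs)^\alpha = (C\sqrt{2t})^\alpha$ by \cref{eq:small-ball}, to equal one; a binomial upper-tail bound then gives probability at most $\paren*{4(C\sqrt{2t})^\alpha}^{n/2}$ for that $\theta$ --- an exponent proportional to $\alpha n$ with a base that is small when $t$ is small. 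It is the uniformization over $\theta$ that degrades this to the exponent $\alpha n/6$ and produces the constant $C' = 3C^4 e^{1+9/\alpha}$.

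Third is the uniformization, which is the only real difficulty. A crude $\epsilon$-net of the sphere has size $(3/\epsilon)^d$, but transferring the indicator estimate from net points to nearby directions via Cauchy--Schwarz costs a factor $\lambda_{\max}(\hat\Sigma_n)^{1/2}$, and $\lambda_{\max}(\hat\Sigma_n)$ is uncontrolled under only $\Mean\bracket{\norm{X}^2} < \infty$. Instead I would apply the Donsker--Varadhan variational (PAC-Bayes) inequality with uniform prior $\pi$ on the sphere and, on the bad event, posterior $\rho$ equal to the uniform measure on a spherical cap of radius $r$ about a data-dependent witnessing direction $\theta_0$. The crucial gain is that averaging the counting functional $F(\theta) = \sum_i\ind\cbrace*{\abrace*{Z_i,\theta}^2 \ge (2s)^2}$ over this cap replaces the operator-norm obstruction by the trace $\mathrm{tr}(\hat\Sigma_n)$: on the bad event, $\int (n - F)\,d\rho$ exceeds $\tfrac{n}{2} - \tfrac{r^2 n}{(d-1)s^2}\mathrm{tr}(\hat\Sigma_n)$, and $\Mean\bracket{\mathrm{tr}(\hat\Sigma_n)} = \mathrm{rank}(\Sigma)$ is tame under a mere second moment. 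The KL cost is $\KL(\rho,\pi) \le (d-1)\ln(2/r)$; since $d/n \le \alpha/6$ a factor of $\ln(2/r)$ in the exponent is affordable, and choosing $r$ (which will shrink with $t$) to balance this KL penalty against the cap-transfer error, together with a Markov bound on $\mathrm{tr}(\hat\Sigma_n)$ and the factorized exponential moment $\Mean\bracket{\int e^{-\mu F(\theta)}\,d\pi(\theta)} \le \paren*{e^{-\mu} + (1 - e^{-\mu})(2Cs)^\alpha}^n$, and then optimizing the three free parameters $s$, $\mu$ (the PAC-Bayes temperature) and $r$ (the cap radius), yields the exponent $\alpha n/6$ and the constant $C'$.

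The main obstacle is precisely this last step: obtaining a tail that loses only a constant factor in the $n$-exponent --- with no additive probability floor, and with the residual $\log(1/t)$-type factors from the shrinking cap absorbed into $C'$ and the exponent --- while assuming nothing about $\norm{X}$ beyond a second moment. The delicate bookkeeping lies in (i) choosing $r$ so that the KL penalty $(d-1)\ln(2/r)$, the cap-transfer error $\tfrac{r^2 n}{(d-1)s^2}\mathrm{tr}(\hat\Sigma_n)$, and the target exponent $\alpha n/6$ are simultaneously balanced, and (ii) arranging the Markov step on $\mathrm{tr}(\hat\Sigma_n)$ so that the final estimate collapses to the clean form $(C't)^{\alpha n/6}$. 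The only other point needing care is the degenerate case $\mathrm{rank}(\Sigma) < d$ in the whitening reduction, which is handled by restricting throughout to $V = \mathrm{range}(\Sigma)$.
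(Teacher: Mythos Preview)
The paper does not prove this statement at all: it is quoted verbatim as Corollary~3 of \textcite{Mourtada2022ExactMatrices} and then used as a black box in the proof of \cref{lem:min-eigvalue-sig}. So there is no ``paper's own proof'' to compare against.

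That said, your proposal is the right one for the cited source. The reduction to isotropic coordinates on $\mathrm{range}(\Sigma)$, the truncation $\frac{1}{n}\sum_i\abrace{Z_i,\theta}^2 \ge s^2\cdot\frac{1}{n}\sum_i\ind\cbrace{\abrace{Z_i,\theta}^2\ge s^2}$ together with a binomial tail bound on the indicators, and the replacement of an $\epsilon$-net by a PAC--Bayes (Donsker--Varadhan) inequality with a cap posterior so that the perturbation is controlled by $\mathrm{tr}(\hat\Sigma_n)$ rather than $\lambda_{\max}(\hat\Sigma_n)$, is exactly the architecture of Mourtada's argument; this is precisely what allows the result to hold under only $\Mean\bracket{\norm{X}^2}<\infty$ and to deliver a clean tail of the form $(C't)^{\alpha n/6}$ with no additive floor. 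The one place where your sketch is slightly loose is the cap-transfer estimate: the precise form of the correction term (your $\tfrac{r^2 n}{(d-1)s^2}\,\mathrm{tr}(\hat\Sigma_n)$) and how the Markov step on $\mathrm{tr}(\hat\Sigma_n)$ is folded into the final exponent require care to land exactly on $C'=3C^4e^{1+9/\alpha}$ and exponent $\alpha n/6$; but the mechanism you describe is the correct one, and the bookkeeping you flag as the ``main obstacle'' is indeed where all the work lies.
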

\cref{eq:small-ball} is known as the small-ball condition.

\begin{proof}[Proof of \cref{lem:min-eigvalue-sig}]
     To take an advantage of \cref{thm:eig-inq-tail}, we need to ensure that $X_i$ satisfies the small-ball condition in \cref{eq:small-ball}. Let $\Sigma_n = \frac{1}{n}X^{\top}X$. Then, the expected value of $\Sigma_n$ is equal to $\sigma^2_XI + \mu\mu^{\top} \coloneqq \Sigma$, i.e., $\Mean\bracket*{\Sigma_n} = \Sigma$. Given $\theta \in \RealSet^d\setminus\cbrace{0}$, $\abrace{\theta, X_i}^2/\sigma^2_X\norm{\theta}^2$ follows the non-central $\chi^2$ distribution with degree of freedom $1$ and non-centrality parameter $\abrace{\theta, \mu}^2/\sigma^2_X\norm{\theta}^2$. Consequently, we verify the satisfication of the small-ball condition of $X_i$ by confirming that for a random variable $Z$ following the non-central $\chi^2$ distribution with degree of freedom $1$ and non-centrality parameter $\lambda^2$, there exists $C$ and $\alpha \in (0,1]$ such that 
    \begin{align}
        \p\cbrace*{Z \le t^2} \le (Ct)^\alpha.
    \end{align}
    
    The cumulative distribution fucntion of the non-central $\chi^2$ distribution with degree of freedom 1 has a closed-form using the error function~(See \autocite{JankovMasirevic2017OnDistribution} and references therein). Specifically, letting $\mathrm{erf}(z)$ be the error function, defined as 
    \begin{align}
        \mathrm{erf}(z) = \frac{2}{\sqrt{\pi}}\int_0^ze^{-x^2}dx,
    \end{align}
    the cumulative distribution function of $Z$ is obtained as
    \begin{align}
        \p\cbrace*{Z \le t^2} = \frac{1}{2}\paren*{\mathrm{erf}\paren*{\frac{t - \lambda}{\sqrt{2}}} + \mathrm{erf}\paren*{\frac{t + \lambda}{\sqrt{2}}}}.
    \end{align}
    Since $e^{-x^2}$ is an even function, we have
    \begin{align}
        \p\cbrace*{Z \le t^2} =& \frac{1}{\sqrt{2\pi}}\paren*{\int_{0}^{\lambda+t}e^{-\nicefrac{x^2}{2}}dx + \int_{0}^{t-\lambda}e^{-\nicefrac{x^2}{2}}dx} \\
        =& \frac{1}{\sqrt{2\pi}}\paren*{\int_{0}^{\lambda+t}e^{-\nicefrac{x^2}{2}}dx + \int_{\lambda-t}^0e^{-\nicefrac{x^2}{2}}dx} \\
        =& \frac{1}{\sqrt{2\pi}}\paren*{\int_{0}^{\lambda+t}e^{-\nicefrac{x^2}{2}}dx - \int_{0}^{\lambda-t}e^{-\nicefrac{x^2}{2}}dx} \\
        =& \frac{1}{\sqrt{2\pi}}\int_{\lambda-t}^{\lambda+t}e^{-\nicefrac{x^2}{2}}dx.
    \end{align}
    Noting that $e^{-\nicefrac{x^2}{2}} \le e^{-\frac{\paren{0\lor\paren{\lambda-t}}^2}{2}}$ for $x \in (\lambda-t, \lambda+t)$, we have
    \begin{align}
        \p\cbrace*{Z \le t^2} \le \frac{1}{\sqrt{2\pi}}\int_{\lambda-t}^{\lambda+t}dx = \sqrt{\frac{2}{\pi}}e^{-\frac{\paren{0\lor\paren{\lambda-t}}^2}{2}}t. \label{eq:chi-square-cdf-bound}
    \end{align}

    We verify that $X_i$ satisfies the small-ball condition by utilizing \cref{eq:chi-square-cdf-bound}. Recall that $\abrace{\theta, X_i}^2/\sigma^2_X\norm{\theta}^2$ follows the non-central $\chi^2$ distribution with degree of freedom $1$ and non-centrality parameter $\lambda^2 = \abrace{\theta, \mu}^2/\sigma^2_X\norm{\theta}^2$ for any $\theta \in \RealSet^d\setminus\cbrace{0}$. By \cref{eq:chi-square-cdf-bound}, we have
    \begin{align}
        \p\cbrace*{\frac{\abs*{\abrace{\theta, X_i}}}{\sigma^2_X\norm{\theta}^2}^2 < t^2} \le& \sqrt{\frac{2}{\pi}}e^{-\frac{\paren{0\lor\paren{\lambda-t}}^2}{2}}t.
    \end{align}
    Noting that $\norm*{\Sigma^{\nicefrac{1}{2}}\theta}^2 = \sigma^2_X\norm{\theta}^2 + \abrace{\theta,\mu}^2$, we have
    \begin{align}
        \p\cbrace*{\abrace*{\theta, X_i}^2 \le t^2\norm*{\Sigma^{\nicefrac{1}{2}}\theta}^2} \le& \sqrt{\frac{2}{\pi}\paren*{1 + \frac{\abrace*{\theta,\mu}^2}{\sigma^2_X\norm{\theta}^2}}}e^{-\frac{\paren{0\lor\paren{\lambda-t}}^2}{2}}t \\
        =& \sqrt{\frac{2}{\pi}\paren*{1 + \lambda^2}}e^{-\frac{\paren{0\lor\paren{\lambda-t}}^2}{2}}t. \label{eq:min-eigvalue-sig-1}
    \end{align}
    We divide into two cases, $\lambda > t$ and $\lambda \le t$, to derive an upper bound on \cref{eq:min-eigvalue-sig-1}.

    (Case $\lambda > t$) 
    Since $(\lambda - t)^2 = \lambda^2 - 2\lambda t + t^2 \ge \lambda ^2 - 2t^2 + t^2 = \lambda^2 - t^2$, an upper bound on \cref{eq:min-eigvalue-sig-1} is obtained as
    \begin{align}
        & \sqrt{\frac{2}{\pi}\paren*{1 + \lambda^2}}e^{-\frac{\paren{0\lor\paren{\lambda-t}}^2}{2}}t \\
        \le& \sqrt{\frac{2}{\pi}\paren*{1 + \lambda^2}}e^{-\frac{\lambda^2 - t^2}{2}}t.
    \end{align}
    For positive numbers $a$ and $b$, $\sqrt{a+b} \le \sqrt{a} + \sqrt{b}$. Using this fact, we have
    \begin{align}
        & \sqrt{\frac{2}{\pi}\paren*{1 + \lambda^2}}e^{-\frac{\paren{0\lor\paren{\lambda-t}}^2}{2}}t \\
        \le& \sqrt{\frac{2}{\pi}}\paren*{\sqrt{1 + t^2} + \sqrt{\lambda^2 - t^2}e^{-\frac{\lambda^2 - t^2}{2}}}t.
    \end{align}
    Since a function $x \to xe^{-\nicefrac{x^2}{2}}$ admits a maximum on $x \in (0,\infty)$ of $e^{-\nicefrac{1}{2}}$, we have $\sqrt{\lambda^2 - t^2}e^{-\frac{\lambda^2 - t^2}{2}} \le e^{-\nicefrac{1}{2}}$. Consequently, we have
    \begin{align}
        \sqrt{\frac{2}{\pi}\paren*{1 + \lambda^2}}e^{-\frac{\paren{0\lor\paren{\lambda-t}}^2}{2}}t 
        \le \sqrt{\frac{16}{\pi e}\paren*{1 + t^2}}t, \label{eq:lambda-case1}
    \end{align}
    where we use the fact $(1+e^{-\nicefrac{1}{2}})^2 \le 8e^{-1}$.

    (Case $\lambda \le t$) We can easily verify that 
    \begin{align}
        \sqrt{\frac{2}{\pi}\paren*{1 + \lambda^2}}e^{-\frac{\paren{0\lor\paren{\lambda-t}}^2}{2}}t \le \sqrt{\frac{2}{\pi}\paren*{1 + t^2}}t \label{eq:lambda-case2}
    \end{align}

    Combining \cref{eq:lambda-case1,eq:lambda-case2}, we have for every $t > 0$,
    \begin{align}
        \p\cbrace*{\abrace*{\theta, X_i}^2 \le t^2\norm*{\Sigma^{\nicefrac{1}{2}}\theta}^2} \le \sqrt{\frac{16}{\pi e}\paren*{1 + t^2}}t.
    \end{align}
    For $t^2 \in (0, -\frac{1}{2} + \frac{1}{2}\sqrt{1 + \frac{\pi e}{4}}]$, we have
    \begin{align}
        \sqrt{\frac{16}{\pi e}\paren*{1 + t^2}}t \le \sqrt{\frac{8}{\pi e}\paren*{1 + \sqrt{1 + \frac{\pi e}{4}}}}t.
    \end{align}
    For $t \ge -\frac{1}{2} + \frac{1}{2}\sqrt{1 + \frac{\pi e}{4}}$, 
    \begin{align}
        \sqrt{\frac{8}{\pi e}\paren*{1 + \sqrt{1 + \frac{\pi e}{4}}}}t \ge \sqrt{\frac{4}{\pi e}\paren*{\paren*{1 + \frac{\pi e}{4}} - 1}} = 1.
    \end{align}
    Hence, for every $t > 0$, we have
    \begin{align}
        \p\cbrace*{\abrace*{\theta, X_i}^2 \le t^2\norm*{\Sigma^{\nicefrac{1}{2}}\theta}^2} \le \sqrt{\frac{8}{\pi e}\paren*{1 + \sqrt{1 + \frac{\pi e}{4}}}}t \le 7^{\nicefrac{1}{4}}t.
    \end{align}
    It confirms $X_i$ satisfies the small-ball condition with $C=7^{\nicefrac{1}{4}}$ and $\alpha = 1$. Application of \cref{thm:eig-inq-tail} yields the desired claim.
\end{proof}
\begin{proof}[Proof of \cref{lem:exp-max-eig-inv}]
    For a positive random variable $X$, we can express the expected value of $X$ as $\Mean\bracket{X} = \int_0^\infty \p\cbrace{X > t}dt$. Applying this to our problem, we obtain
    \begin{align}
        \Mean\bracket*{\lambda_{\max}\paren*{\paren*{\frac{1}{n}X^{\top}X}^{-1}}} =& \int_0^\infty \p\cbrace*{\lambda_{\max}\paren*{\paren*{\frac{1}{n}X^{\top}X}^{-1}} > t} dt \\
        =& \int_0^\infty \p\cbrace*{\lambda_{\min}\paren*{\paren*{\frac{1}{n}X^{\top}X}} < t^{-1}} dt.
    \end{align}
    Now, let us set $C = \frac{21e^{10}}{\sigma^2_X}$. Using the previous result, we can rewrite the expectation of interest as
    \begin{align}
        \Mean\bracket*{\lambda_{\max}\paren*{\paren*{\frac{1}{n}X^{\top}X}^{-1}}} =& C + \int_C^\infty \p\cbrace*{\lambda_{\min}\paren*{\paren*{\frac{1}{n}X^{\top}X}} < t^{-1}} dt \\
        \le& C + \int_C^\infty \paren*{C t^{-1}}^{\nicefrac{n}{6}} dt \\
        =& C + C^{\nicefrac{n}{6}} \paren*{1 - \frac{n}{6}}^{-1}\paren*{-C^{1-\nicefrac{n}{6}}} \\
        =& C\paren*{1 + \frac{6}{n - 6}},
    \end{align}
    which yields the claim.
\end{proof}

\parahead{Proofs of theorems} By utilizing the results of \cref{lem:ell-sph,lem:ell-angle,lem:min-eigvalue-sig,lem:exp-max-eig-inv}, we provide the complete proofs for both \cref{thm:err-unit-beta} and \cref{thm:err-norm}.
\begin{proof}[Proof of \cref{thm:err-unit-beta}]
 We begin by demonstrating that obtaining an upper bound on the expected error of the direction estimator can be reduced to finding a lower bound on the inner product $A_n = \abrace{\frac{\hat\beta}{\norm{\hat\beta}},\frac{\beta^*}{\norm{\beta^*}}}$. Specifically, a straightforward calculation yields
  \begin{align}
    \Mean\bracket*{\norm*{\frac{\hat\beta}{\norm{\hat\beta}} - \frac{\beta^*}{\norm{\beta^*}}}^2} =& 2\paren*{1 - \Mean\bracket*{\abrace*{\frac{\hat\beta}{\norm{\hat\beta}},\frac{\beta^*}{\norm{\beta^*}}}}}. \label{eq:err-unit-a-n}
  \end{align}
  Therefore, it suffices to establish a lower bound on $\Mean\bracket{A_n}$.

  Taking advantage of \cref{lem:ell-angle}, we derive a lower bound on $A_n$. Let $r = \paren{\hat\beta - \beta^*}^{\top}\paren{\frac{n}{\sigma^2_\xi}\Sigma_n}\paren{\hat\beta - \beta^*}$. From \cref{lem:ell-angle}, it follows that
  \begin{align}
      A_n \ge \sqrt{1 - \frac{\sigma^2_\xi r}{\norm{\beta^*}^2n}\lambda_{\max}\paren*{\Sigma_n^{-1}}},
  \end{align}
  provided that $r \le n\norm{\beta^*}^2 / \sigma^2_\xi\lambda_{\min}\paren{\Sigma_n^{-1}}$. Since $1 - \sqrt{1 - x} \le x$ for $x \in [0,1]$, it follows that
  \begin{align}
      1 - A_n \le \frac{\sigma^2_\xi r}{\norm{\beta^*}^2n}\lambda_{\max}\paren*{\Sigma_n^{-1}},
  \end{align}
  as long as $r \le n\norm{\beta^*}^2 / \sigma^2_\xi\lambda_{\max}\paren{\Sigma_n^{-1}}$.

  Next, we derive an upper bound on the expectation of $1-A_n$. Noting that conditioned on $X$, $r$ follows the $\chi^2$ distribution with degree of freedom $d$, we have
  \begin{align}
      &\Mean\bracket*{1 - A_n \middle| X} \\
      =& \Mean\bracket*{\paren*{\ind\cbrace*{r \le \frac{n\norm*{\beta^*}^2}{\sigma^2_\xi\lambda_{\max}\paren{\Sigma_n^{-1}}}} + \ind\cbrace*{r > \frac{n\norm*{\beta^*}^2}{\sigma^2_\xi\lambda_{\max}\paren{\Sigma_n^{-1}}}}}\paren*{1-A_n}\middle|X} \\
      \le& \Mean\bracket*{\frac{\sigma^2_\xi r}{\norm{\beta^*}^2n}\lambda_{\max}\paren*{\Sigma_n^{-1}} \middle| X} + \p\cbrace*{r > \frac{n\norm*{\beta^*}^2}{\sigma^2_\xi\lambda_{\max}\paren{\Sigma_n^{-1}}} \middle| X} \\
      \le& 2\Mean\bracket*{\frac{\sigma^2_\xi r}{\norm{\beta^*}^2n}\lambda_{\max}\paren*{\Sigma_n^{-1}} \middle| X} \label{eq:err-unit-beta-markov}\\
      =& \frac{2\sigma^2_\xi d}{\norm{\beta^*}^2n}\lambda_{\max}\paren*{\Sigma_n^{-1}}, \label{eq:err-unit-beta-1-a}
  \end{align}
  where we use the Markov inequality to obtain \cref{eq:err-unit-beta-markov}. 

 By utilizing \cref{eq:err-unit-a-n}, an upper bound on the expected error can be obtained by deriving an upper bound on the expectation of \cref{eq:err-unit-beta-1-a}. The random variable in \cref{eq:err-unit-beta-1-a} is $\lambda_{\max}\paren*{\Sigma_n^{-1}}$, which allows us to derive the upper bound on the expected error by obtaining an upper bound on the expectation of $\lambda_{\max}\paren*{\Sigma_n^{-1}}$. To accomplish this, we apply \cref{lem:exp-max-eig-inv}. The upper bound from \cref{lem:exp-max-eig-inv} can be substituted into \cref{eq:err-unit-beta-1-a}, resulting in the claimed upper bound.
\end{proof}
\begin{proof}[Proof of \cref{thm:err-norm}]
  We first utilize \cref{lem:ell-sph} to get an upper bound on the squared error of the norm estimator. Let $r = \paren{\hat\beta - \beta^*}^{\top}\paren{\frac{n}{\sigma^2_\xi}\Sigma_n}\paren{\hat\beta - \beta^*}$. From \cref{lem:ell-sph}, we have 
  \begin{align}
      \norm*{\hat\beta - \beta^*}^2 \le \frac{\sigma^2_\xi r}{n}\lambda_{\max}\paren*{\Sigma_n^{-1}}.
  \end{align}
  Application of the triangle and reverse triangle inequality yields
  \begin{align}
      \norm*{\beta^*} - \sqrt{\frac{\sigma^2_\xi r}{n}\lambda_{\max}\paren*{\Sigma_n^{-1}}} \le \norm*{\hat\beta} \le \norm*{\beta^*} + \sqrt{\frac{\sigma^2_\xi r}{n}\lambda_{\max}\paren*{\Sigma_n^{-1}}},
  \end{align}
  equivalently
  \begin{align}
      \paren*{\norm*{\hat\beta} - \norm*{\beta^*}}^2 \le \frac{\sigma^2_\xi r}{n}\lambda_{\max}\paren*{\Sigma_n^{-1}}.
  \end{align}

  Taking expectation conditioned on $X$ yields
  \begin{align}
    & \Mean\bracket*{\paren*{\norm{\hat\beta} - \norm{\beta^*}}^2 \middle| X} \\
    =& \Mean\bracket*{\frac{\sigma^2_\xi r}{n}\lambda_{\max}\paren*{\Sigma_n^{-1}} \middle| X} \\
    \le& \frac{\sigma^2_\xi d}{n}\lambda_{\max}\paren*{\Sigma_n^{-1}}, \label{eq:err-norm-1}
  \end{align}
  where we use the fact that $r$ follows the $\chi^2$ distribution with degree of freedom $d$ to obtain the last line. Again, application of \cref{lem:exp-max-eig-inv} into expectation of \cref{eq:err-norm-1} yields the claim.
\end{proof}

\section{Details of Upper Bound Analyses}
This section presents a detailed proof of the upper bound stated in \cref{thm:main-err}, which is achieved through an analysis of the estimator constructed in  \cref{sec:est}. Specifically, we establish the following theorem:
\begin{theorem}\label{thm:loose-upper}
 Let $\hat\beta_n$ be the estimator constructed in \cref{sec:est}. Then, there exists a universal constant $C > 0$ such that for any $\delta \in (0,1)$ and $n \ge 12(3d\lor4\ln(M/\delta))/\min_{s\in[M]}p_s$, 
 \begin{align}
     \mathcal{E}_{n}\paren*{\frac{1}{2},\delta} \le C\frac{\sigma^2_\xi B^2 dM \lor \sigma^2_XB^2M \lor B^2U^2}{n} + o\paren*{\frac{1}{n}}.
 \end{align}
\end{theorem}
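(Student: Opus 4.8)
\parahead{Proof plan}
Since $\hat{f}_n$ is $(\nicefrac{1}{2},\delta)$-consistently fair by \cref{thm:loose-fair}, it is an admissible competitor in the infimum defining $\mathcal{E}_n(\nicefrac{1}{2},\delta)$, so it suffices to bound $\sup_{\beta^*_\cdot\in\dom{B},\,\mu_\cdot\in\dom{M}}\Mean\bracket{\mathcal{E}(\hat{f}_n;\beta^*_\cdot,\mu_\cdot)}$, which is exactly the quantity estimated in \cref{thm:decompose}. Applying $\paren{\sum_{i=1}^{6}a_i}^2\le 6\sum_{i=1}^{6}a_i^2$ to the inner square in \cref{eq:decompose} reduces the task to bounding $\sum_{s\in[M]}p_s\Mean\bracket{a_{i,s}^2}$ for each of the six summands $a_{1,s},\dots,a_{6,s}$ appearing inside the square --- the contributions of the errors of $\hat\mu_s$, $\widehat{\norm{\beta_\cdot}}$, $\tilde\beta_s$, $\hat\beta'_\cdot$, $\hat\mu'_\cdot$ and $\hat{p}_\cdot$, respectively. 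I would carry out every estimate conditionally on the histogram $n_\cdot$, using that, given $n_\cdot$, each block $D_{b,s}$ (resp.\ $D'_{b,s}$) is an i.i.d.\ sample from the group-$s$ law of size $n_{b,s}\ge\floor{n_s/3}$ (resp.\ $n'_{b,s}\ge\floor{n_s/2}$) and that distinct blocks are mutually independent. The hypothesis $n\ge 12(3d\lor 4\ln(M/\delta))/\min_sp_s$ guarantees, via a Chernoff bound and a union bound, that on an event $\mathcal{G}=\cbrace{\forall s:\ n_s\ge np_s/2}$ every $n_s>18d$, so no estimator truncates to zero and every block has more than $3d$ points; moreover $\p\cbrace{\mathcal{G}^c}\le\sum_s e^{-np_s/8}$ decays faster than any power of $n$, while on $\mathcal{G}^c$ every $a_{i,s}$ is deterministically $O(B^2(U^2\lor\sigma^2_X\lor\sigma^2_\xi))$, so the contribution of $\mathcal{G}^c$ is absorbed into the $o(1/n)$ remainder. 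It therefore remains to work on $\mathcal{G}$.

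The ``routine'' terms $a_{1,s},a_{4,s},a_{5,s},a_{6,s}$ are handled by elementary second-moment computations. For $a_{1,s}$: conditionally on $n_\cdot$ and $\tilde\beta_s$, $\abrace{\tilde\beta_s,\mu_s-\hat\mu_s}$ is centered Gaussian with variance $\norm{\tilde\beta_s}^2\sigma^2_X/n_{3,s}\le\sigma^2_X/n_{3,s}$, while $\Mean\bracket{\widehat{\norm{\beta_\cdot}}^2\mid n_\cdot}=O(B^2)$ since $\widehat{\norm{\beta_\cdot}}$ estimates $\overline{\norm{\beta^*_\cdot}}\le B$ with error controlled by \cref{thm:err-norm}; with $n_{3,s}\ge n_s/6\ge np_s/12$ this gives $\sum_sp_s\Mean\bracket{a_{1,s}^2}=O(\sigma^2_XB^2M/n)$. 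For $a_{4,s},a_{5,s},a_{6,s}$, which do not depend on $s$, I use that, given $n_\cdot$ on $\mathcal{G}$, the summands $\abrace{\hat\beta'_{s'}-\beta^*_{s'},\hat\mu'_{s'}}$, $\abrace{\beta^*_{s'},\hat\mu'_{s'}-\mu_{s'}}$ and $(\hat{p}_{s'}-p_{s'})\abrace{\beta^*_{s'},\mu_{s'}}$ are centered and independent across $s'$ (OLS is unbiased, distinct group blocks are independent, and $\hat\beta'_{s'}$ is independent of $\hat\mu'_{s'}$ because $D'_{1,s'}$ and $D'_{2,s'}$ are disjoint), so the cross terms vanish and the variances telescope over $s'$ with weights $\hat{p}^2_{s'}$; combining with the conditional OLS moment bound $\Mean\bracket{\norm{\hat\beta'_{s'}-\beta^*_{s'}}^2\mid n_\cdot}=O(\sigma^2_\xi d/(\sigma^2_Xn_{s'}))$ (a consequence of \cref{lem:exp-max-eig-inv}, exactly as in the proofs of \cref{thm:err-norm,thm:err-unit-beta}), the empirical-mean variance $O(d\sigma^2_X/n_{s'})$, and $\norm{\beta^*_\cdot}\le B$, $\norm{\mu_\cdot}\le U$, these three terms contribute at most $O(B^2U^2/n)$ together with $O(1/n^2)$ lower-order pieces, which accounts for the third slot of the claimed bound.

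The two ``hard'' terms are $a_{2,s}$ (which has no $s$-dependence) and $a_{3,s}$, and this is where \cref{thm:err-norm,thm:err-unit-beta} enter. For $a_2$ one writes $\widehat{\norm{\beta_\cdot}}-\overline{\norm{\beta^*_\cdot}}=\sum_s\hat{p}_s(\widehat{\norm{\beta_s}}-\norm{\beta^*_s})+\sum_s(\hat{p}_s-p_s)\norm{\beta^*_s}$; given $n_\cdot$ the second sum is deterministic, while the terms $\widehat{\norm{\beta_s}}-\norm{\beta^*_s}$ are independent across $s$ with second moment $O(\sigma^2_\xi d/(\sigma^2_Xn_s))$ by \cref{thm:err-norm}. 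Expanding the square into variance, bias and empirical-frequency cross terms, using $n_s\ge np_s/2$, $\sum_s\hat{p}^2_s/n_s=1/n$, checking that the bias of $\widehat{\norm{\beta_s}}$ is second-order in the OLS error (hence the bias/cross terms are $o(1/n)$), and invoking $B^2M\ge 1$ (which follows from \cref{eq:rist-beta}), gives $\sum_sp_s\Mean\bracket{a_{2,s}^2}=O(\sigma^2_\xi B^2dM/n+\sigma^2_XB^2M/n)$. The decisive term is $a_{3,s}$: by \cref{thm:err-unit-beta}, $\Mean\bracket{\norm{\tilde\beta_s-\beta^*_s/\norm{\beta^*_s}}^2\mid n_\cdot}\le C\sigma^2_\xi d/(\sigma^2_X\norm{\beta^*_s}^2n_s)$ on $\mathcal{G}$, so using $n_s\ge np_s/2$,
\begin{align*}
\sum_sp_s\Mean\bracket{a_{3,s}^2}\le\sum_sp_s\,\sigma^2_X\overline{\norm{\beta^*_\cdot}}^2\frac{C\sigma^2_\xi d}{\sigma^2_X\norm{\beta^*_s}^2n_s}\le\frac{2C\sigma^2_\xi d}{n}\,\overline{\norm{\beta^*_\cdot}}^2\sum_s\frac{1}{\norm{\beta^*_s}^2},
\end{align*}
and the crucial algebraic step is the identity $\overline{\norm{\beta^*_\cdot}}^2\sum_s\frac{1}{\norm{\beta^*_s}^2}=M\cdot\frac{(\sum_sp_s\norm{\beta^*_s})^2}{M}\sum_s\frac{1}{\norm{\beta^*_s}^2}\le MB^2$, which is exactly the defining inequality \cref{eq:rist-beta}; hence $\sum_sp_s\Mean\bracket{a_{3,s}^2}=O(\sigma^2_\xi B^2dM/n)$. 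Summing the six contributions, collecting the $o(1/n)$ remainders and taking a maximum then produces the asserted bound $C(\sigma^2_\xi B^2dM\lor\sigma^2_XB^2M\lor B^2U^2)/n+o(1/n)$.

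The main obstacle is the analysis of $a_{2,s}$ and $a_{3,s}$: it rests on the new estimation-error bounds for the norm and the direction of the OLS estimator (\cref{thm:err-norm,thm:err-unit-beta}), whose proofs are the technical core of the paper, and --- for $a_{3,s}$ --- on the non-obvious recognition that the sum of the per-group direction errors, weighted by $p_s\sigma^2_X\overline{\norm{\beta^*_\cdot}}^2/\norm{\beta^*_s}^2$, is precisely the product constrained in \cref{eq:rist-beta}, which is what turns an estimation-error bound into the clean $B^2dM$ dependence. A secondary but unavoidable nuisance is the bookkeeping of the truncation events $\cbrace{n_s\le 18d}$ and $\cbrace{n_s\le 12d}$ and of the $n_\cdot$-conditional independence of the sample blocks, which is what makes the $o(1/n)$ accounting and the vanishing of the cross terms legitimate.
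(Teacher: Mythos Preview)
Your plan is essentially the paper's own argument: reduce to the six terms of \cref{thm:decompose}, bound each conditionally on $n_\cdot$, invoke \cref{thm:err-norm,thm:err-unit-beta} for the norm and direction pieces, and recognise that the weighted sum $\overline{\norm{\beta^*_\cdot}}^2\sum_s\norm{\beta^*_s}^{-2}$ arising in $a_{3,s}$ is exactly the quantity controlled by \cref{eq:rist-beta}. Your good-event treatment of $n_\cdot$ is a legitimate variant of the paper's use of \cref{lem:inv-n,lem:small-n}, and your variance--bias split for $a_2$ is slightly sharper than the paper's Jensen step but arrives at the same order.

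There is one place where your sketch does not quite close. For $a_{4}$ you propose to combine the \emph{parameter}-error bound $\Mean\bracket{\norm{\hat\beta'_{s}-\beta^*_{s}}^2\mid n_\cdot}=O(\sigma^2_\xi d/(\sigma^2_X n_{s}))$ with $\norm{\hat\mu'_{s}}^2\approx U^2$. Doing so yields a contribution of order $\sigma^2_\xi d\,U^2/(\sigma^2_X n)$, which is \emph{not} dominated by $\sigma^2_\xi B^2dM\lor\sigma^2_XB^2M\lor B^2U^2$ for a universal $C$ (take $U$ large and $\sigma_X$ small). The paper avoids this by bounding $\Mean\bracket{\abrace{\hat\beta'_{s}-\beta^*_{s},\hat\mu'_{s}}^2\mid n_\cdot}$ through the \emph{prediction}-error quantity $\Mean\bracket{\norm{\Sigma_s^{1/2}(\hat\beta'_{s}-\beta^*_{s})}^2\mid n_\cdot}$, where $\Sigma_s=\sigma^2_XI+\mu_s\mu_s^\top$, and then applying Mourtada's trace bound (\cref{thm:decomp-fifth}, via \cref{thm:tr-err}) to obtain $O(\sigma^2_\xi d/n_s)$ with no $U^2/\sigma^2_X$ factor. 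Once you use that bound, your plan goes through as stated.
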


To establish the validity of \cref{thm:loose-upper}, we begin by proving \cref{thm:decompose}, which demonstrates that the estimation error can be decomposed into the sum of errors associated with individual components. Subsequently, we derive upper bounds for the estimation errors of each component. Finally, we synthesize these results to provide a proof of \cref{thm:loose-upper}.

\subsection{Proof of \crtcref{thm:decompose}}
We commence the error analysis of our estimator by decomposing the estimation error, as presented in \cref{thm:decompose}. Recall the statement of \cref{thm:decompose}
\begin{theorem}\label{thm:decompose-a}
 For the estimator defined in \cref{eq:loose-reg}, the mean square deviation from $f^*_{\mathrm{DP}}$ is bounded above by
 \begin{multline}
      \sum_{s \in [M]}p_s\Mean\bracket[\Bigg]{\paren[\Bigg]{\Mean\bracket*{\widehat{\norm{\beta_\cdot}}^2\middle|n_\cdot}^{\nicefrac{1}{2}}\Mean\bracket*{\abrace*{\tilde\beta_s,\mu_s-\hat\mu_{s}}^2\middle|n_\cdot}^{\nicefrac{1}{2}} + \sigma_X\Mean\bracket*{\paren*{\widehat{\norm{\beta_\cdot}} - \overline{\norm*{\beta^*_\cdot}}}^2\middle|n_\cdot}^{\nicefrac{1}{2}} + \\ \sigma_X\overline{\norm{\beta^*_\cdot}}\Mean\bracket*{\norm*{\tilde\beta_s - {\beta^*_s}/{\norm{\beta^*_s}}}^2\middle|n_\cdot}^{\nicefrac{1}{2}} + \Mean\bracket*{\paren*{\sum_{s' \in [M]}\hat{p}_{s'}\abrace*{\hat\beta'_{s'} - \beta^*_{s'},\hat\mu'_{s'}}}^2\middle|n_\cdot}^{\nicefrac{1}{2}} + \\ \Mean\bracket*{\paren*{\sum_{s' \in [M]}\hat{p}_{s'}\abrace*{\beta^*_{s'},\hat\mu'_{s'} - \mu_{s'}}}^2\middle|n_\cdot}^{\nicefrac{1}{2}} + \abs*{\sum_{s' \in [M]}\paren*{\hat{p}_{s'} - p_{s'}}\abrace*{\beta^*_{s'},\mu_{s'}}}}^2}. 
 \end{multline}
\end{theorem}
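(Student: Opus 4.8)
The plan is to start from the definition of the mean squared deviation $\mathcal{E}(\hat{f}_n;\beta^*_\cdot,\mu_\cdot) = \Mean[(\hat{f}_n(X,S) - f^*_{\mathrm{DP}}(X,S))^2]$ and condition on the histogram $n_\cdot$ first, since all the individual estimators are built from the group-wise subsamples whose sizes are determined by $n_\cdot$. Writing out $\hat{f}_n$ from \cref{eq:loose-reg} and $f^*_{\mathrm{DP}}$ from \cref{eq:opt-cls}, I would subtract them and organize the difference into a sum of terms, each of which isolates the error in a single component estimator while holding the others at their estimated values. Concretely, the plan is a telescoping decomposition: replace $\widehat{\norm{\beta_\cdot}}$ by $\overline{\norm{\beta^*_\cdot}}$, then $\tilde\beta_s$ by $\beta^*_s/\norm{\beta^*_s}$, then $\hat\mu_s$ by $\mu_s$ in the first summand, and similarly handle $\hat\beta'_{s'}$, $\hat\mu'_{s'}$, $\hat p_{s'}$ in the second summand, so that $\hat{f}_n(x,s) - f^*_{\mathrm{DP}}(x,s)$ becomes a sum of six difference terms matching the six terms in \cref{eq:decompose}.

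The next step is to bound $\Mean[(\hat{f}_n(X,S)-f^*_{\mathrm{DP}}(X,S))^2]$ by taking the outer expectation over $(X,S)$ and the inner expectation over the sample. I would split the outer expectation over $S$ as $\sum_s p_s \Mean[\,\cdot\mid S=s]$, which produces the leading $\sum_{s\in[M]} p_s$. For the term $\widehat{\norm{\beta_\cdot}}\abrace{\tilde\beta_s, x - \hat\mu_s}$ minus its target, after the telescoping one gets pieces like $\widehat{\norm{\beta_\cdot}}\abrace{\tilde\beta_s, \mu_s - \hat\mu_s}$, $(\widehat{\norm{\beta_\cdot}} - \overline{\norm{\beta^*_\cdot}})\abrace{\beta^*_s/\norm{\beta^*_s}, x-\mu_s}$, and $\overline{\norm{\beta^*_\cdot}}\abrace{\tilde\beta_s - \beta^*_s/\norm{\beta^*_s}, x-\mu_s}$. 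Conditioned on $S=s$, $X - \mu_s \sim N(0,\sigma_X^2 I)$, so $\Mean[\abrace{u, X-\mu_s}^2\mid S=s] = \sigma_X^2\norm{u}^2$ for any $u$ independent of $X$; this is where the $\sigma_X$ factors and the squared-norm structure in the second and third terms of \cref{eq:decompose} come from, and it requires the sample-splitting that makes $\tilde\beta_s$, $\widehat{\norm{\beta_\cdot}}$, $\hat\mu_s$ independent of the fresh $X$ drawn for evaluation (they are functions of $D_n$, and $X$ is the test point). For the first term, $\abrace{\tilde\beta_s, \mu_s-\hat\mu_s}$ does not involve the test $X$, so it passes through as is. After taking these conditional expectations one applies the triangle inequality in $L^2(\Mean[\cdot\mid n_\cdot]^{1/2})$ — i.e., $\Mean[(\sum_i A_i)^2\mid n_\cdot]^{1/2} \le \sum_i \Mean[A_i^2\mid n_\cdot]^{1/2}$ — to separate the six contributions, and for cross terms that mix independent randomness one factorizes the conditional expectation (e.g.\ $\Mean[\widehat{\norm{\beta_\cdot}}^2\abrace{\tilde\beta_s,\mu_s-\hat\mu_s}^2\mid n_\cdot] = \Mean[\widehat{\norm{\beta_\cdot}}^2\mid n_\cdot]\Mean[\abrace{\tilde\beta_s,\mu_s-\hat\mu_s}^2\mid n_\cdot]$ using that $\widehat{\norm{\beta_\cdot}}$ is built from $D_{1,\cdot}$ while $\tilde\beta_s,\hat\mu_s$ come from $D_{2,s},D_{3,s}$), which explains why the first term appears as a product of two conditional second moments rather than a single one.

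The main obstacle I anticipate is bookkeeping rather than any deep inequality: getting the telescoping order right so that every intermediate quantity is independent of the randomness being integrated at that step, and making sure the independence structure afforded by the sample split (Table~\ref{tab:each-estimator}) is invoked correctly for each cross term — in particular that $\widehat{\norm{\beta_\cdot}}$ (from $D_{1,\cdot}$), $\tilde\beta_s$ (from $D_{2,s}$), $\hat\mu_s$ (from $D_{3,s}$), and the primed estimators (from $D'_{1,\cdot},D'_{2,\cdot}$) are mutually independent given $n_\cdot$, and all independent of the test point $(X,S)$. One also has to be careful that the second summand $\sum_{s'}\hat p_{s'}\abrace{\hat\beta'_{s'},\hat\mu'_{s'}}$ and its target $\sum_{s'}p_{s'}\abrace{\beta^*_{s'},\mu_{s'}}$ are deterministic given $D_n$ (no dependence on the test $X$), so those three difference terms — the $\hat\beta'$ error, the $\hat\mu'$ error, and the $\hat p$ error, obtained by a three-step telescope $\hat p_{s'}\abrace{\hat\beta'_{s'},\hat\mu'_{s'}} \to \hat p_{s'}\abrace{\beta^*_{s'},\hat\mu'_{s'}} \to \hat p_{s'}\abrace{\beta^*_{s'},\mu_{s'}} \to p_{s'}\abrace{\beta^*_{s'},\mu_{s'}}$ — carry through unchanged except for the $L^2$-triangle-inequality split, with the $\hat p$ term left as an absolute value since it is fully deterministic. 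Assembling the six terms under a single $\Mean[(\cdot)^2]$ with the outer expectation over $n_\cdot$ then gives exactly the stated bound \eqref{eq:decompose}.
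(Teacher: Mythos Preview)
Your approach is essentially the paper's: condition on $n_\cdot$, decompose $\hat f_n - f^*_{\mathrm{DP}}$ into six telescoping pieces, apply the $L^2$ triangle inequality conditionally on $n_\cdot$, and use the sample-splitting independence together with $X-\mu_s\sim N(0,\sigma_X^2 I)$ to simplify each piece. One point to tidy up: your stated telescoping order (``norm, then direction, then mean'') and the three pieces you list are mutually inconsistent, and neither matches the paper's order. The paper swaps $\hat\mu_s\to\mu_s$ \emph{first}, then $\widehat{\norm{\beta_\cdot}}\to\overline{\norm{\beta^*_\cdot}}$, then $\tilde\beta_s\to\beta^*_s/\norm{\beta^*_s}$, which yields the second piece as $(\widehat{\norm{\beta_\cdot}}-\overline{\norm{\beta^*_\cdot}})\abrace{\tilde\beta_s,X-\mu_s}$ (with $\tilde\beta_s$, not $\beta^*_s/\norm{\beta^*_s}$) and the third piece as $\overline{\norm{\beta^*_\cdot}}\abrace{\tilde\beta_s-\beta^*_s/\norm{\beta^*_s},X-\mu_s}$. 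Swapping the mean first is what guarantees that the remaining pieces involve $X-\mu_s$ (mean zero given $S=s$), and swapping the norm before the direction is what places the deterministic factor $\overline{\norm{\beta^*_\cdot}}$---rather than the random $\widehat{\norm{\beta_\cdot}}$---in front of the direction error, as the stated bound requires. With that correction, your argument goes through verbatim.
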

We provide a proof of \cref{thm:decompose-a} as follows:
\begin{proof}[Proof of \cref{thm:decompose-a}]
  We begin by decomposing $\hat{f}_n(X,S) - f^*_{\mathrm{DP}}(X,S)$ into six terms. Recall the definitions of $\hat{f}_n(x,s)$ and $f^*_{\mathrm{DP}}(x,s)$:
  \begin{align}
      \hat{f}_n(x,s) =&  \widehat{\norm{\beta_{\cdot}}}\abrace*{\tilde\beta_s, x - \hat\mu_s} + \sum_{s'\in[M]}\hat{p}_{s'}\abrace*{\hat\beta'_s,\hat\mu'_s} \\
      f^*_{\mathrm{DP}}(x,s) =& \overline{\norm{\beta^*_\cdot}}\abrace*{\frac{\beta^*_s}{\norm{\beta^*_s}},x - \mu_s} + \smashoperator{\sum{s'\in[M]}}p_{s'}\abrace*{\beta^*_{s'},\mu_{s'}}.
  \end{align}
  Through elementary calculations, we obtain:
  \begin{multline}
      \hat{f}_n(X,S) - f^*_{\mathrm{DP}}(X,S) = \widehat{\norm{\beta{\cdot}}}\abrace*{\tilde\beta_S, \mu_S - \hat\mu_S} + \paren*{\widehat{\norm{\beta_{\cdot}}} - \norm{\beta^*_{\cdot}}}\abrace*{\tilde\beta_S, X - \mu_S} \\ +  \norm{\beta^*_{\cdot}}\abrace*{\tilde\beta_S - \frac{\beta^*_S}{\norm{\beta^*_S}}, X - \mu_S} + \sum_{s' \in [M]}\hat{p}_{s'}\abrace*{\hat\beta'_{s'} - \beta^*_{s'}, \hat\mu'_{s'}} \\ + \sum_{s' \in [M]}\hat{p}_{s'}\abrace*{\beta^*_{s'}, \hat\mu'_{s'} - \mu_{s'}} + \sum_{s' \in [M]}\paren*{\hat{p}_{s'} - p_{s'}}\abrace*{\beta^*_{s'}, \mu_{s'}}. \label{eq:decompose-dec}
  \end{multline}
  By the Cauchy-Schwarz inequality, for two random variable $Z_1$ and $Z_2$, we have $\Mean\bracket{(Z_1 + Z_2)^2}^{\nicefrac{1}{2}} \le \Mean\bracket{Z_1^2}^{\nicefrac{1}{2}} + \Mean\bracket{Z_2^2}^{\nicefrac{1}{2}}$. By applying this fact into the expectation of \cref{eq:decompose-dec} conditioned on $S$ and $n_{\cdot}$ multiple times, we have
  \begin{align}
      & \Mean\bracket*{\paren*{\hat{f}_n(X,S) - f^*_{\mathrm{DP}}(X,S)}^2} \\
      =& \sum_{s \in [M]}p_s\Mean\bracket*{\Mean\bracket*{\paren*{\hat{f}_n(X,S) - f^*_{\mathrm{DP}}(X,S)}^2\middle|S=s,n_{\cdot}} \middle| S=s} \\
      \le& \begin{multlined}[t][.9\textwidth]
        \sum_{s \in [M]}p_s\Mean\bracket[\Bigg]{\paren[\Bigg]{
            \Mean\bracket*{\paren*{\widehat{\norm{\beta{\cdot}}}\abrace*{\tilde\beta_S, \mu_S - \hat\mu_S}}^2\middle|S=s,n_{\cdot}}^{\nicefrac{1}{2}} \\
            + \Mean\bracket*{\paren*{\paren*{\widehat{\norm{\beta_{\cdot}}} - \norm{\beta^*_{\cdot}}}\abrace*{\tilde\beta_S, X - \mu_S}}^2\middle|S=s,n_{\cdot}}^{\nicefrac{1}{2}} \\
            + \Mean\bracket*{\paren*{\norm{\beta^*_{\cdot}}\abrace*{\tilde\beta_S - \frac{\beta^*_S}{\norm{\beta^*_S}}, X - \mu_S}}^2\middle|S=s,n_{\cdot}}^{\nicefrac{1}{2}} \\
            + \Mean\bracket*{\paren*{\sum_{s' \in [M]}\hat{p}_{s'}\abrace*{\hat\beta'_{s'} - \beta^*_{s'}, \hat\mu'_{s'}}}^2\middle|S=s,n_{\cdot}}^{\nicefrac{1}{2}} \\
            + \Mean\bracket*{\paren*{\sum_{s' \in [M]}\hat{p}_{s'}\abrace*{\beta^*_{s'}, \hat\mu'_{s'} - \mu_{s'}}}^2\middle|S=s,n_{\cdot}}^{\nicefrac{1}{2}} \\
            + \Mean\bracket*{\paren*{\sum_{s' \in [M]}\paren*{\hat{p}_{s'} - p_{s'}}\abrace*{\beta^*_{s'}, \mu_{s'}}}^2\middle|S=s,n_{\cdot}}^{\nicefrac{1}{2}}
        }^2\Bigg|S=s}.
      \end{multlined} \label{eq:decompose-eq1}
  \end{align}
  In the subsequent analyses, we derive upper bounds for each term in \cref{eq:decompose-eq1}.

 (First term in \cref{eq:decompose-eq1}) Due to the splitting of the sample, $\widehat{\norm{\beta_\cdot}}$, $\tilde\beta_s$, and $\hat\mu_s$ are independent conditioned on $n_\cdot$. Thus, we have:
 \begin{align}
     & \Mean\bracket*{\paren*{\widehat{\norm{\beta{\cdot}}}\abrace*{\tilde\beta_S, \mu_S - \hat\mu_S}}^2\middle|S=s,n_{\cdot}}^{\nicefrac{1}{2}} \\
     =& \Mean\bracket*{\paren*{\widehat{\norm{\beta_\cdot}}\abrace*{\tilde\beta_s, \mu_s - \hat\mu_{s}}}^2\middle|n_\cdot}^{\nicefrac{1}{2}} \\
     =& \Mean\bracket*{\paren*{\widehat{\norm{\beta_\cdot}}^2}\middle|n_\cdot}^{\nicefrac{1}{2}}\Mean\bracket*{\abrace*{\tilde\beta_s,\mu_s-\hat\mu_{s}}^2\middle|n_\cdot}^{\nicefrac{1}{2}}.
 \end{align}
 This term matches the first term of the desired bound.

 (Second term in \cref{eq:decompose-eq1}) Since $\widehat{\norm{\beta_\cdot}}$, $\tilde\beta_s$, and $X$ are independent conditioned on $n_\cdot$, we have
 \begin{align}
     & \Mean\bracket*{\paren*{\paren*{\widehat{\norm{\beta_\cdot}} - \overline{\norm{\beta^*_\cdot}}}\abrace*{\tilde\beta_s,X - \mu_s}}^2\middle|S=s,n_\cdot} \\
     =& \sigma^2_X\Mean\bracket*{\paren*{\widehat{\norm{\beta_\cdot}} - \overline{\norm{\beta^*_\cdot}}}^2\middle|n_\cdot},
 \end{align}
 where we use the fact that $X-\mu_s \sim N(0,\sigma^2_XI)$ conditioned on $S=s$, and $\tilde\beta_s \in \dom{S}_{d-1}$ almost surely. This result corresponds to the second term of the desired bound.

 (Third term in \cref{eq:decompose-eq1}) Since $X-\mu_s \sim N(0,\sigma^2_XI)$, we have
 \begin{align}
     & \Mean\bracket*{\paren*{\overline{\norm{\beta^*_\cdot}}\abrace*{\tilde\beta_s - \frac{\beta^*_s}{\norm{\beta^*_s}},X - \mu_s}}^2\middle|S=s,n_\cdot} \\
     =& \sigma^2_X\overline{\norm{\beta^*_\cdot}}^2\Mean\bracket*{\norm*{\tilde\beta_s - \frac{\beta^*_s}{\norm{\beta^*_s}}}^2\middle|n_\cdot},
 \end{align}
 which corresponds to the third term of the desired bound.

 (Forth and fifth terms in \cref{eq:decompose-eq1}) These terms are independent of $S$, so we can omit $S=s$ from the condition, resulting in the fourth and fifth terms of the desired bound.

 (Sixth term in \cref{eq:decompose-eq1}) This term does not contain any random variable when $n_\cdot$ is fixed. Thus, we can remove the expectation, yielding the sixth term of the desired bound.
\end{proof}

\subsection{Estimation Error Analyses for Each Component}
This subsection presents an analysis of the estimation errors associated with each component estimator. In particular, we investigate the estimation errors of $\hat\mu_s$, $\widehat{\norm{\beta_\cdot}}$, $\tilde\beta_s$, $\hat\beta'_s$, and $\hat\mu'_s$.

\subsubsection{Estimation Error Analysis for $\hat\mu_s$}
Here, we presents the proof of the following theorem.
\begin{theorem}\label{thm:decomp-first}
 Given $s \in [M]$, if $n_s > 18d$, we have
 \begin{align}
     \sup_{v \in \mathbb{S}_{d-1}}\Mean\bracket*{\abrace*{v,\mu_s-\hat\mu_{s}}^2\middle|n_\cdot} \le \frac{6\sigma^2_X}{n_{s}}.
 \end{align}
\end{theorem}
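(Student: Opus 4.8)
The plan is to observe that $\hat\mu_s$ is nothing more than the empirical mean of the i.i.d.\ Gaussian vectors in the sub-sample $D_{3,s}$, so that $\hat\mu_s-\mu_s$ is again Gaussian with an explicit isotropic covariance; the bound then follows from a one-line variance computation together with the size guarantee $n_{3,s}\ge\floor{n_s/3}$ from the sample partition.

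First I would record that, conditionally on $n_\cdot$, the vectors $X_{3,s,1},\dots,X_{3,s,n_{3,s}}$ remain i.i.d.\ $N(\mu_s,\sigma^2_X I)$ (the splitting of $D_s$ into $D_{1,s},D_{2,s},D_{3,s}$ is carried out without inspecting the feature values, so conditioning on $n_\cdot$ does not disturb the distribution of the features in $D_{3,s}$), whence
\begin{align}
 \hat\mu_s-\mu_s=\frac{1}{n_{3,s}}\sum_{i=1}^{n_{3,s}}\paren*{X_{3,s,i}-\mu_s}\sim N\paren*{0,\frac{\sigma^2_X}{n_{3,s}}I}.
\end{align}
Then, for any fixed $v\in\mathbb{S}_{d-1}$, the scalar $\abrace{v,\mu_s-\hat\mu_s}$ is a zero-mean Gaussian with variance $\frac{\sigma^2_X}{n_{3,s}}\norm{v}^2=\frac{\sigma^2_X}{n_{3,s}}$, so $\Mean\bracket*{\abrace{v,\mu_s-\hat\mu_s}^2\middle|n_\cdot}=\frac{\sigma^2_X}{n_{3,s}}$; since this value does not depend on $v$, the supremum over $\mathbb{S}_{d-1}$ equals $\frac{\sigma^2_X}{n_{3,s}}$.

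Finally I would convert $n_{3,s}$ into $n_s$ using $n_{3,s}\ge\floor{n_s/3}$: because $n_s>18d\ge 6$, we have $\floor{n_s/3}\ge(n_s-2)/3\ge n_s/6$, hence $\frac{\sigma^2_X}{n_{3,s}}\le\frac{6\sigma^2_X}{n_s}$, which is the claimed bound. There is no genuine obstacle in this argument; the only points needing a little care are to keep every statement conditional on $n_\cdot$ — under which the relevant sub-sample is still i.i.d.\ Gaussian — and to check the elementary inequality $\floor{n_s/3}\ge n_s/6$, which is precisely where the hypothesis $n_s>18d$ (in fact any $n_s\ge 4$) is invoked.
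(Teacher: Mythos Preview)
Your proposal is correct and follows essentially the same approach as the paper: both note that, conditionally on $n_\cdot$, $\hat\mu_s-\mu_s\sim N(0,\frac{\sigma^2_X}{n_{3,s}}I)$, compute $\Mean\bracket*{\abrace{v,\mu_s-\hat\mu_s}^2\mid n_\cdot}=\frac{\sigma^2_X}{n_{3,s}}$ for every unit vector $v$, and then invoke $n_{3,s}\ge\floor{n_s/3}\ge n_s/6$ to finish. The only cosmetic difference is that the paper phrases the variance computation via the covariance matrix identity $\abrace{v,\Mean[(\mu_s-\hat\mu_s)(\mu_s-\hat\mu_s)^\top\mid n_\cdot]v}$, whereas you work directly with the scalar Gaussian.
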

\begin{proof}[Proof of \cref{thm:decomp-first}]
    Given $v \in \mathbb{S}_{d-1}$, we have
    \begin{align}
        \Mean\bracket*{\abrace*{v,\mu_s-\hat\mu_{s}}^2\middle|n_\cdot} = \abrace*{v, \Mean\bracket*{\paren*{\mu_s-\hat\mu_s}\paren*{\mu_s-\hat\mu_s}^\top\middle|n_\cdot}v}.
    \end{align}
    According to the definition, $\hat\mu_s$ is an average of $n_{3,s}$ i.i.d. random variables following $N(\mu_s,\sigma^2_XI)$. Hence, we have $\mu_s - \hat\mu_s \sim N(0, \frac{\sigma^2_X}{n_{3,s}}I)$, which implies $\Mean\bracket{\paren{\mu_s-\hat\mu_s}\paren{\mu_s-\hat\mu_s}^\top|n_\cdot} = \frac{\sigma^2_X}{n_{3,s}}I$. Consequently, we obtain:
    \begin{align}
        \Mean\bracket*{\abrace*{v,\mu_s-\hat\mu_{s}}^2\middle|n_\cdot} =& \abrace*{v, \frac{\sigma^2_X}{n_{3,s}}Iv} \\
        =& \frac{\sigma^2_X}{n_{3,s}}\abrace*{v, v} = \frac{\sigma^2_X}{n_{3,s}}.
    \end{align}
  Since $n_{3,s} \ge \floor{n_s/3} \ge n_s/6$ for $n_s \ge 6$, the claim follows.
\end{proof}

\subsubsection{Estimation Error Analysis for $\widehat{\norm{\beta_\cdot}}$}
Here, we present the proof of the following theorem.
\begin{theorem}\label{thm:decomp-second}
 For any $s \in [M]$, we have
 \begin{align}
     \Mean\bracket*{\paren*{\widehat{\norm{\beta_\cdot}} - \overline{\norm*{\beta^*_\cdot}}}^2\middle|n_\cdot}^{\nicefrac{1}{2}} \le \sqrt{\frac{189e^{10}\sigma^2_\xi Md}{\sigma^2_Xn} + \sum_{s\in[M]}\ind\cbrace*{n_s \le 18d}\frac{n_s}{n}} + \abs*{\sum_{s\in[M]}\paren*{\hat{p}_s - p_s}\norm*{\beta^*_s}}.
 \end{align}
\end{theorem}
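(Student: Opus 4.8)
The plan is to separate the two error sources inside $\widehat{\norm{\beta_\cdot}} = \sum_{s\in[M]}\hat p_s\widehat{\norm{\beta_s}}$: the error of the individual norm estimators $\widehat{\norm{\beta_s}}$ and the error of the frequency estimators $\hat p_s = n_s/n$. Since $\overline{\norm{\beta^*_\cdot}} = \sum_{s\in[M]}p_s\norm{\beta^*_s}$, inserting $\pm\sum_{s\in[M]}\hat p_s\norm{\beta^*_s}$ yields
\begin{align}
\widehat{\norm{\beta_\cdot}} - \overline{\norm{\beta^*_\cdot}} = \sum_{s\in[M]}\hat p_s\paren*{\widehat{\norm{\beta_s}} - \norm{\beta^*_s}} + \sum_{s\in[M]}\paren*{\hat p_s - p_s}\norm{\beta^*_s}.
\end{align}
Conditioned on $n_\cdot$ the second sum is deterministic (both $\hat p_s$ and $\norm{\beta^*_s}$ are fixed), so the triangle inequality for the conditional $L^2$ norm given $n_\cdot$ bounds $\Mean\bracket*{\paren*{\widehat{\norm{\beta_\cdot}} - \overline{\norm{\beta^*_\cdot}}}^2\middle|n_\cdot}^{\nicefrac{1}{2}}$ by $\Mean\bracket*{\paren*{\sum_{s\in[M]}\hat p_s\paren*{\widehat{\norm{\beta_s}} - \norm{\beta^*_s}}}^2\middle|n_\cdot}^{\nicefrac{1}{2}} + \abs*{\sum_{s\in[M]}\paren*{\hat p_s - p_s}\norm{\beta^*_s}}$, and the second summand is exactly the trailing term of the claim. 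It then remains to bound the first summand by the quantity under the square root.

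For that I would exploit that $\hat p_\cdot$ is a probability vector, $\hat p_s\ge 0$ and $\sum_{s}\hat p_s = n/n = 1$, and is fixed given $n_\cdot$: Jensen's inequality for $x\mapsto x^2$ gives the pointwise bound $\paren*{\sum_s\hat p_s\paren*{\widehat{\norm{\beta_s}} - \norm{\beta^*_s}}}^2 \le \sum_s\hat p_s\paren*{\widehat{\norm{\beta_s}} - \norm{\beta^*_s}}^2$, so after taking $\Mean\bracket*{\cdot\middle|n_\cdot}$ it suffices to bound $\sum_{s}\hat p_s\,\Mean\bracket*{\paren*{\widehat{\norm{\beta_s}} - \norm{\beta^*_s}}^2\middle|n_\cdot}$. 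I would then split on the truncation threshold. If $n_s\le 18d$ then $\widehat{\norm{\beta_s}} = 0$ and the group's conditional second moment is $\norm{\beta^*_s}^2$, so these groups contribute $\sum_{s:n_s\le 18d}\tfrac{n_s}{n}\norm{\beta^*_s}^2$, which gives the indicator term of the claim (and, since $\cbrace{n_s\le 18d}$ has probability exponentially small in $n$, this contribution vanishes faster than $\nicefrac{1}{n}$ once the expectation over $n_\cdot$ is taken). If $n_s>18d$ then $\widehat{\norm{\beta_s}} = \norm{\hat\beta_{1,s}}$, and conditioned on $n_\cdot$ the subsample $D_{1,s}$ consists of $n_{1,s}\ge\floor{n_s/3}>6d$ i.i.d. copies of $(X,Y)$ with $X\sim N(\mu_s,\sigma^2_X I)$ and $Y = \abrace{\beta^*_s,X}+\xi$; crucially, \cref{thm:err-norm} bounds the \emph{full} second moment $\Mean\bracket*{\paren*{\norm{\hat\beta_{1,s}}-\norm{\beta^*_s}}^2\middle|n_\cdot}$ — it already absorbs the bias of the norm of the OLS estimator, so no separate bias analysis is needed — giving $\tfrac{21e^{10}\sigma^2_\xi d}{\sigma^2_X n_{1,s}}\paren*{1+\tfrac{6}{n_{1,s}-6}}$.

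Finally I would assemble the two cases. Since $n_{1,s}$ is a fixed fraction (about $\nicefrac{1}{3}$) of $n_s$, the product $\hat p_s/n_{1,s}$ is at most a constant multiple of $1/n$, and the correction factor $1+\tfrac{6}{n_{1,s}-6}$ is bounded by an absolute constant because $n_{1,s}>6d$; as there are at most $M$ groups with $n_s>18d$, their total contribution is at most $\tfrac{189e^{10}\sigma^2_\xi Md}{\sigma^2_X n}$. Adding the truncation term, taking the square root, and recombining with $\abs*{\sum_s\paren*{\hat p_s - p_s}\norm{\beta^*_s}}$ via the triangle inequality from the first step yields the claim; note the resulting bound is uniform in $s$ (indeed it does not involve $s$). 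The main obstacle is purely the bookkeeping in the $n_s>18d$ case: verifying that $n_s>18d$ really forces $n_{1,s}>6d$ so that \cref{thm:err-norm} is applicable, and carrying the constants $e^{10}$ and the $\paren*{1+\tfrac{6}{n-6}}$ factor through the $\hat p_\cdot$-weighted average over the $M$ groups so that they collapse to the stated $189e^{10}$.
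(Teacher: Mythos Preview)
Your proposal is correct and follows essentially the same route as the paper: the same add-and-subtract decomposition, the $L^2$ triangle inequality, Jensen's inequality with the probability weights $\hat p_s$, the case split on $n_s>18d$, and the application of \cref{thm:err-norm} together with $n_{1,s}\ge n_s/6$ and a constant bound on $1+\tfrac{6}{n_{1,s}-6}$ to collapse the sum to $\tfrac{189e^{10}\sigma^2_\xi Md}{\sigma^2_X n}$. One small point: in the truncation branch you correctly obtain $\sum_{s:n_s\le 18d}\tfrac{n_s}{n}\norm{\beta^*_s}^2$, and the paper's own proof bounds this by $\sum_{s}\ind\{n_s\le 18d\}\tfrac{B^2 n_s}{n}$ using $\norm{\beta^*_s}\le B$ --- the statement as written omits this $B^2$ factor, which appears to be a typo in the paper rather than an error in your argument.
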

\begin{proof}[Proof of \cref{thm:decomp-second}]
 By combining the definitions of $\widehat{\norm{\beta_\cdot}}$ and $\overline{\norm*{\beta^*_\cdot}}$ and utilizing the Cauchy-Schwarz inequality, we obtain
 \begin{align}
     & \Mean\bracket*{\paren*{\widehat{\norm{\beta_\cdot}} - \overline{\norm*{\beta^*_\cdot}}}^2\middle|n_\cdot}^{\nicefrac{1}{2}} \\
     =& \Mean\bracket*{\paren*{\sum_{s \in [M]}\hat{p}_s\paren*{\widehat{\norm{\beta_s}} - \overline{\norm*{\beta^*_s}}} + \sum_{s\in[M]}\paren*{\hat{p}_s - p_s}\overline{\norm*{\beta^*_s}}}^2\middle|n_\cdot}^{\nicefrac{1}{2}} \\
     \le& \Mean\bracket*{\paren*{\sum_{s \in [M]}\hat{p}_s\paren*{\widehat{\norm{\beta_s}} - \overline{\norm*{\beta^*_s}}}}^2\middle|n_\cdot}^{\nicefrac{1}{2}} + \abs*{\sum_{s\in[M]}\paren*{\hat{p}_s - p_s}\overline{\norm*{\beta^*_s}}}. \label{eq:decomp-second-1}
 \end{align}
 
 Next, we derive an upper bound for the first term in \cref{eq:decomp-second-1}. Applying Jensen's inequality, we have:
 \begin{align}
     & \Mean\bracket*{\paren*{\sum_{s \in [M]}\hat{p}_s\paren*{\widehat{\norm{\beta_s}} - \overline{\norm*{\beta^*_s}}}}^2\middle|n_\cdot} \\
     \le& \sum_{s \in [M]}\frac{n_s}{n}\Mean\bracket*{\paren*{\widehat{\norm{\beta_s}} - \overline{\norm*{\beta^*_s}}}^2\middle|n_\cdot} \\
     =& \sum_{s \in [M]}\frac{n_s}{n}\paren*{\ind\cbrace*{n_s > 18d}\Mean\bracket*{\paren*{\widehat{\norm{\beta_s}} - \overline{\norm*{\beta^*_s}}}^2\middle|n_\cdot} + \ind\cbrace*{n_s \le 18d}\norm{\beta^*_s}^2}
 \end{align}
 Using the fact that $n_{1,s} > 6d$ for $n_s > 18d$ and employing \cref{thm:err-norm}, we obtain:
 \begin{align}
     & \Mean\bracket*{\paren*{\sum_{s \in [M]}\hat{p}_s\paren*{\widehat{\norm{\beta_s}} - \overline{\norm*{\beta^*_s}}}}^2\middle|n_\cdot} \\
     \le& \sum_{s \in [M]}\frac{n_s}{n}\frac{21e^{10}\sigma^2_\xi d}{n_{1,s}}\paren*{1 + \frac{6}{n_{1,s} - 6}} + \sum_{s \in [M]}\ind\cbrace*{n_s \le 18d}\frac{\norm{\beta^*_s}^2n_s}{n} \\
     \le& \frac{189e^{10}\sigma^2_\xi Md}{\sigma^2_X n} + \sum_{s\in[M]}\ind\cbrace*{n_s \le 18d}\frac{B^2n_s}{n},
 \end{align}
 where the last line follows from the fact that $n_{1,s} \ge \floor{n_s/3} \ge n_s/6$ for $n_s \ge 6$, $\frac{6}{n_{1,s}-6} \le \nicefrac{1}{2}$ for $n_s \ge 18$, and $\norm{\beta^*_s} \le B$. 
\end{proof}

\subsubsection{Estimation Error Analysis for $\tilde\beta_s$}
Here, we will prove the following theorem.
\begin{theorem}\label{thm:decomp-forth}
 For any $s \in [M]$, we have
 \begin{align}
     \Mean\bracket*{\norm*{\tilde\beta_s - \frac{\beta^*_s}{\norm{\beta^*_s}}}^2\middle|n_\cdot} \le \begin{dcases}
      \frac{756e^{10}\sigma^2_\xi d}{\sigma^2_X\norm{\beta^*_s}^2n_{s}} & \textif n_s > 18d, \\
      1 & \otherwise.
     \end{dcases}
 \end{align}
\end{theorem}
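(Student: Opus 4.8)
The plan is to branch on the two cases in the definition of $\tilde\beta_s$ and reduce the nontrivial branch directly to \cref{thm:err-unit-beta}. In the degenerate case $n_s \le 18d$ we have $\tilde\beta_s = 0$ by construction, so $\norm*{\tilde\beta_s - \beta^*_s/\norm{\beta^*_s}}^2 = \norm*{\beta^*_s/\norm{\beta^*_s}}^2 = 1$ almost surely, which is exactly the claimed bound; nothing more is required.

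For the case $n_s > 18d$, $\tilde\beta_s = \hat\beta_{2,s}/\norm{\hat\beta_{2,s}}$ is the normalized OLS estimator built from $D_{2,s}$. The first step is to observe that, conditioned on $n_\cdot$, the subsample $D_{2,s}$ consists of $n_{2,s}$ i.i.d.\ copies of $(X,Y)$ with $X \sim N(\mu_s,\sigma^2_X I)$ and $Y = \abrace{\beta^*_s,X}+\xi$, $\xi \sim N(0,\sigma^2_\xi)$: since $n_\cdot$ is a function of $(S_1,\dots,S_n)$ alone, conditioning on it (using exchangeability of the sample and the fact that the within-group partition is measurable with respect to the ordering and independent auxiliary randomness) leaves each group sample $D_s$ as $n_s$ independent draws from the conditional law of $(X,Y)$ given $S=s$, hence $D_{2,s}$ as $n_{2,s}$ such draws; and that conditional law is exactly the model of \cref{thm:err-unit-beta} with $\beta^* = \beta^*_s$ and $\mu = \mu_s$. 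Since $n_s > 18d$ forces $n_{2,s} \ge \floor{n_s/3} > 6d$, \cref{thm:err-unit-beta} applies with sample size $n_{2,s}$ and gives
\begin{align}
\Mean\bracket*{\norm*{\tilde\beta_s - \frac{\beta^*_s}{\norm{\beta^*_s}}}^2\middle|n_\cdot} \le \frac{84e^{10}\sigma^2_\xi d}{\sigma^2_X\norm{\beta^*_s}^2 n_{2,s}}\paren*{1 + \frac{6}{n_{2,s}-6}}.
\end{align}

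It remains to convert this into a bound in terms of $n_s$. Using the identity $\frac{1}{n}\paren*{1 + \frac{6}{n-6}} = \frac{1}{n-6}$ with $n = n_{2,s}$, the right-hand side equals $\frac{84e^{10}\sigma^2_\xi d}{\sigma^2_X\norm{\beta^*_s}^2(n_{2,s}-6)}$; the sample-splitting guarantee then gives $n_{2,s} - 6 \ge \floor{n_s/3} - 6 \ge n_s/3 - 7 \ge n_s/9$ (the last inequality once $n_s \ge 36$, which holds under the standing assumptions $d \ge 2$ and $n_s > 18d$). Substituting $n_{2,s} - 6 \ge n_s/9$ produces the stated constant $84 \cdot 9 = 756$ and closes the case.

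The only step carrying real content is the conditioning reduction in the middle paragraph — making precise that fixing $n_\cdot$ preserves the i.i.d.\ structure of $D_{2,s}$ so that \cref{thm:err-unit-beta} can be invoked group-wise. Once that is in place the rest is elementary: the sample-split fractions and a one-line rearrangement of the correction factor. This same conditioning argument is reused verbatim when bounding the errors of $\widehat{\norm{\beta_s}}$, $\hat\beta'_s$, $\hat\mu_s$ and $\hat\mu'_s$.
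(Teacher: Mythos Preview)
Your proposal is correct and follows essentially the same route as the paper: handle the degenerate case $n_s \le 18d$ by noting $\tilde\beta_s = 0$, then in the main case apply \cref{thm:err-unit-beta} to the subsample $D_{2,s}$ and simplify the resulting bound using the sample-split inequality $n_{2,s}\ge\lfloor n_s/3\rfloor$. The paper converts to the constant $756$ via the cruder pair of bounds $n_{2,s}\ge n_s/6$ and $1+6/(n_{2,s}-6)\le 3/2$ (mirroring the argument in \cref{thm:decomp-second}), whereas you collapse the correction factor to $1/(n_{2,s}-6)$ and then use $n_{2,s}-6\ge n_s/9$; both arrive at $84\cdot 9 = 756$. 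Your explicit remark on why conditioning on $n_\cdot$ preserves the i.i.d.\ structure of $D_{2,s}$ is more careful than the paper, which leaves this implicit; the only caveat is that your final inequality $n_s/3 - 7 \ge n_s/9$ invokes $d\ge 2$, which is not stated in \cref{thm:decomp-forth} itself but is guaranteed by the standing hypothesis $M(d-1)>16$ of \cref{thm:main-err} under which this lemma is used.
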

\begin{proof}[Proof of \cref{thm:decomp-forth}]
 If $n_s \le 18d$, $\tilde\beta_s = 0$, and we thus have $\Mean\bracket{\norm{\tilde\beta_s - \frac{\beta^*_s}{\norm{\beta^*_s}}}^2|n_\cdot} = \norm{\frac{\beta^*_s}{\norm{\beta^*_s}}}^2 = 1$. For $n_s > 18d$, we have $n_{2,s} > 6d$. Application of \cref{thm:err-unit-beta} yields
 \begin{align}
     \Mean\bracket*{\norm*{\tilde\beta_s - \frac{\beta^*_s}{\norm{\beta^*_s}}}^2\middle|n_\cdot} \le \frac{84e^{10}\sigma^2_\xi d}{\sigma^2_X\norm{\beta^*_s}^2n_{2,s}}\paren*{1 + \frac{6}{n_{2,s}-6}}
 \end{align}
 We get the claim in the same manner as the proof of \cref{thm:decomp-second}.
\end{proof}

\subsubsection{Estimation Error Analysis for $\hat\beta'_{s'}$}
Here, we will prove the following theorem.
\begin{theorem}\label{thm:decomp-fifth}
  Given $s \in [M]$, let $\Sigma_{s} = \Mean\bracket{X_sX_s^\top}$ for $X_s \sim N(\mu_s,\sigma^2_XI)$. Then, if $n_s > 12d$, we have
 \begin{align}
     \Mean\bracket*{\norm*{\Sigma_{s}^{\nicefrac{1}{2}}\paren*{\hat\beta'_{s} - \beta^*_{s}}}^2\middle|n_\cdot} \le \frac{4\sigma^2_\xi d}{n_{s}} + 504e^{10}\sigma^2_\xi\paren*{\frac{4d}{n_{s}}}^2.
 \end{align}
\end{theorem}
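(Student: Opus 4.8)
The plan is to reduce \cref{thm:decomp-fifth} to an expectation bound for $\operatorname{tr}(\hat\Sigma_s^{-1}\Sigma_s)$, where I write $\hat\Sigma_s=\tfrac{1}{n'_{1,s}}(X'_{1,s})^{\top}X'_{1,s}$ for the empirical second‑moment matrix of the $n'_{1,s}$ samples that define $\hat\beta'_s=\hat\beta'_{1,s}$ under $n_s>12d$ (the sample split gives $n'_{1,s}\ge\floor{n_s/2}\ge 6d$ and $n'_{1,s}\ge n_s/4$). Exactly as in \cref{eq:ols-est}, $\hat\beta'_s-\beta^*_s=\hat\Sigma_s^{-1}\paren*{\tfrac{1}{n'_{1,s}}(X'_{1,s})^{\top}\xi'_{1,s}}$ with $\xi'_{1,s}\sim N(0,\sigma^2_\xi I)$; conditioning on the design $X'_{1,s}$ and on $n_\cdot$, the vector $\Sigma_s^{\nicefrac{1}{2}}(\hat\beta'_s-\beta^*_s)$ is centred Gaussian with covariance $\tfrac{\sigma^2_\xi}{n'_{1,s}}\Sigma_s^{\nicefrac{1}{2}}\hat\Sigma_s^{-1}\Sigma_s^{\nicefrac{1}{2}}$, so that
\[
\Mean\bracket*{\norm*{\Sigma_s^{\nicefrac{1}{2}}\paren*{\hat\beta'_s-\beta^*_s}}^2\middle|n_\cdot}=\frac{\sigma^2_\xi}{n'_{1,s}}\Mean\bracket*{\operatorname{tr}\paren*{\hat\Sigma_s^{-1}\Sigma_s}\middle|n_\cdot}.
\]
It therefore suffices to prove $\Mean[\operatorname{tr}(\hat\Sigma_s^{-1}\Sigma_s)\mid n_\cdot]\le d+504e^{10}d^2/n'_{1,s}$ up to negligible terms; the essential point is that the leading term must carry coefficient exactly $1$ on $d$, with no $e^{10}$ factor.

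To control $\operatorname{tr}(\hat\Sigma_s^{-1}\Sigma_s)$ I would whiten. Set $W=\Sigma_s^{-\nicefrac{1}{2}}\hat\Sigma_s\Sigma_s^{-\nicefrac{1}{2}}$, the empirical second‑moment matrix of $Z_i=\Sigma_s^{-\nicefrac{1}{2}}X'_{1,s,i}$, so that $\operatorname{tr}(\hat\Sigma_s^{-1}\Sigma_s)=\operatorname{tr}(W^{-1})$. Applying Sherman--Morrison to $\Sigma_s=\sigma^2_XI+\mu_s\mu_s^{\top}$ shows $\Mean[Z_iZ_i^{\top}\mid n_\cdot]=I$, that $Z_i$ is Gaussian with covariance $\sigma^2_X\Sigma_s^{-1}\preceq I$, and $\norm{\Mean[Z_i\mid n_\cdot]}^2=\mu_s^{\top}\Sigma_s^{-1}\mu_s=\norm{\mu_s}^2/(\sigma^2_X+\norm{\mu_s}^2)<1$; this uniform control of the whitened first two moments is exactly what keeps $\norm{\mu_s}$ (hence $U$) out of the final bound. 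Next I would use the exact identity $W^{-1}=I-A+AW^{-1}A$ with $A=W-I$ (valid for $W\succ 0$, and $\operatorname{tr}(AW^{-1}A)=\norm{W^{-\nicefrac{1}{2}}A}_F^2\ge 0$), which gives $\operatorname{tr}(W^{-1})=d-\operatorname{tr}(A)+\operatorname{tr}(AW^{-1}A)$ with $\Mean[\operatorname{tr}(A)\mid n_\cdot]=\operatorname{tr}(\Mean[W\mid n_\cdot]-I)=0$. This is the key manoeuvre: it peels off the leading $d$ with the correct constant and leaves only the genuinely second‑order remainder $\Mean[\operatorname{tr}(AW^{-1}A)\mid n_\cdot]$, in which the pessimistic eigenvalue constants become harmless.

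For the remainder, $\operatorname{tr}(AW^{-1}A)=\operatorname{tr}(W^{-1}A^2)\le\lambda_{\max}(W^{-1})\norm{A}_F^2=\norm{W-I}_F^2/\lambda_{\min}(W)$, and Cauchy--Schwarz decouples the fluctuation from the heavy‑tailed inverse eigenvalue:
\[
\Mean\bracket*{\operatorname{tr}\paren*{AW^{-1}A}\middle|n_\cdot}\le\Mean\bracket*{\norm{W-I}_F^4\middle|n_\cdot}^{\nicefrac{1}{2}}\Mean\bracket*{\lambda_{\min}(W)^{-2}\middle|n_\cdot}^{\nicefrac{1}{2}}.
\]
For the first factor, $\Mean[\norm{W-I}_F^2\mid n_\cdot]=\tfrac{1}{n'_{1,s}}\Mean[\norm{Z_1Z_1^{\top}-I}_F^2]=\tfrac{1}{n'_{1,s}}\paren*{\Mean[\norm{Z_1}^4]-d}=O(d^2/n'_{1,s})$, since $\Mean[\norm{Z_1}^4]\le (d+1)^2$ follows from $\operatorname{Cov}(Z_1)\preceq I$ and $\norm{\Mean[Z_1]}\le 1$; the fourth moment is $O((d^2/n'_{1,s})^2)$ by the analogous, longer expansion of $\norm{\tfrac{1}{n'_{1,s}}\sum_i(Z_iZ_i^{\top}-I)}_F^4$ into index pairings, using that the relevant low‑order moments of the $Z_i$ are polynomial in $d$. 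For the second factor I would argue as in \cref{lem:exp-max-eig-inv}: since $\cbrace{\lambda_{\min}(W)<u}$ is precisely the event that $\hat\Sigma_s\succeq u\Sigma_s$ fails, and the proof of \cref{lem:min-eigvalue-sig} already shows that the $N(\mu_s,\sigma^2_XI)$ design satisfies the small‑ball condition of \cref{thm:eig-inq-tail} with $C=7^{\nicefrac{1}{4}}$ and $\alpha=1$ relative to $\Sigma_s$, we obtain $\p\cbrace{\lambda_{\min}(W)<u\mid n_\cdot}\le(21e^{10}u)^{n'_{1,s}/6}$; integrating this tail bounds $\Mean[\lambda_{\min}(W)^{-2}\mid n_\cdot]$ by $(21e^{10})^2$ up to a factor $1+O(d/n'_{1,s})$ (for the smallest admissible $d$ one instead invokes the Gaussianity of $Z_i$ directly for the negative moment). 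Multiplying the two factors bounds the remainder by a constant multiple of $e^{10}d^2/n'_{1,s}$, and substituting $n'_{1,s}\ge n_s/4$ into the reduction of the first paragraph yields $\tfrac{4\sigma^2_\xi d}{n_s}+504e^{10}\sigma^2_\xi\paren*{\tfrac{4d}{n_s}}^2$ after tracking the numerical constants.

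The main obstacle is exactly keeping the leading term clean: the one‑line estimate $\operatorname{tr}(W^{-1})\le d/\lambda_{\min}(W)$ would attach a factor $\sim 21e^{10}$ to the $\sigma^2_\xi d/n_s$ term, which the statement forbids, so the two‑term matrix‑inverse expansion together with $\Mean[\operatorname{tr}(W-I)\mid n_\cdot]=0$ is indispensable. The secondary difficulties are performing the whitening carefully enough that no $\norm{\mu_s}$ survives, and establishing the fourth‑moment concentration of $\norm{W-I}_F$ alongside the second negative moment of $\lambda_{\min}(W)$.
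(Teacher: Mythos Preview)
Your reduction to $\tfrac{\sigma^2_\xi}{n'_{1,s}}\Mean[\operatorname{tr}(\hat\Sigma_s^{-1}\Sigma_s)\mid n_\cdot]$ is exactly what the paper does, but from there the two routes diverge. The paper does not carry out your second--order Neumann expansion $W^{-1}=I-(W-I)+(W-I)W^{-1}(W-I)$ followed by Cauchy--Schwarz; instead it invokes Theorem~3 of \textcite{Mourtada2022ExactMatrices} as a black box, which states $\tfrac{1}{n}\Mean[\operatorname{tr}(\hat\Sigma_n^{-1}\Sigma)]\le \tfrac{d}{n}+8C'\kappa(\tfrac{d}{n})^2$ under the small--ball condition (already verified in \cref{lem:min-eigvalue-sig} with $C'=21e^{10}$) plus a kurtosis hypothesis $\Mean[\norm{\Sigma^{-1/2}X}^4]\le\kappa d$. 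The only new work in the paper's proof is the short mean--variance computation showing $\kappa=3$, after which $8\cdot 21e^{10}\cdot 3=504e^{10}$ and $n'_{1,s}\ge n_s/4$ finish. Your approach is essentially a self--contained re-derivation of Mourtada's bound: the Neumann identity with $\Mean[\operatorname{tr}(W-I)]=0$ is precisely the mechanism that keeps the leading coefficient equal to $1$, and your whitening step plays the role of the kurtosis check. The paper's route is far shorter and delivers the exact constant $504e^{10}$ for free; your route is more transparent about \emph{why} the leading term is clean, but it leaves the fourth--moment expansion of $\norm{W-I}_F$ and the second negative moment of $\lambda_{\min}(W)$ as genuine additional work, and matching the specific constant $504$ through Cauchy--Schwarz would require tighter bookkeeping than you sketch.
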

To prove \cref{thm:decomp-fifth}, we utilize the following theorem presented by \textcite{Mourtada2022ExactMatrices}. 
\begin{theorem}[Theorem 3 in \autocite{Mourtada2022ExactMatrices}]\label{thm:tr-err}
    Let $X$ be a random vector in $\RealSet^d$ such that it statisfies the small-ball condition of \cref{eq:small-ball} and $\Mean\bracket{\norm{\Sigma^{-\nicefrac{1}{2}}X}^4} \le \kappa d$ for some $\kappa > 0$, where $\Sigma=\Mean\bracket{XX^\top}$. Let $\hat\Sigma_n = \frac{1}{n}\sum_{i=1}^nX_iX_i^\top$, where $X_i$ are i.i.d. copies of $X$. If $n \ge 6\alpha^{-1}d\land12\alpha^{-1}\ln\paren{12\alpha^{-1}}$, 
    \begin{align}
        \frac{1}{n}\Mean\bracket*{\Tr\paren*{\hat\Sigma_n^{-1}\Sigma}} \le \frac{d}{n} + 8C'\kappa\paren*{\frac{d}{n}}^2,
    \end{align}
    where $\alpha$ and $C'$ are as in \cref{thm:eig-inq-tail}.
\end{theorem}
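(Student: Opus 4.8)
The plan is to normalize to the isotropic case, extract the leading term $\nicefrac{d}{n}$ \emph{exactly} via a single leave-one-out (Sherman--Morrison) step, and then absorb the resulting quadratic remainder into the stated constant using the fourth-moment hypothesis together with the lower-tail bound for $\lambda_{\min}(\hat\Sigma_n)$ supplied by \cref{thm:eig-inq-tail}. To normalize, replace each $X_i$ by $\Sigma^{-\nicefrac{1}{2}}X_i$: this sends $\Sigma$ to $I$, keeps the small-ball condition \cref{eq:small-ball} valid with the same $C$ and $\alpha$ (apply it to $\phi=\Sigma^{-\nicefrac{1}{2}}\theta$, noting that the transformed covariance is $I$), turns the moment hypothesis into exactly $\Mean\bracket*{\norm{X}^4}\le\kappa d$, and leaves $\Tr\paren*{\hat\Sigma_n^{-1}\Sigma}$ unchanged since it equals the trace of the inverse of $\Sigma^{-\nicefrac{1}{2}}\hat\Sigma_n\Sigma^{-\nicefrac{1}{2}}$. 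So it suffices to show $\frac1n\Mean\bracket*{\Tr\paren*{\hat\Sigma_n^{-1}}}\le\frac{d}{n}+8C'\kappa\paren*{\frac{d}{n}}^2$ when $\Sigma=I$.

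For the leading term I would adjoin an independent fresh copy $X_{n+1}$; by independence, $\frac1n\Mean\bracket*{\Tr\paren*{\hat\Sigma_n^{-1}}}=\Mean\bracket*{W}$ where $W\coloneqq\frac1n X_{n+1}^{\top}\hat\Sigma_n^{-1}X_{n+1}$. Writing $\hat\Sigma_{n+1}=\frac{n}{n+1}\hat\Sigma_n+\frac{1}{n+1}X_{n+1}X_{n+1}^{\top}$ and applying the Sherman--Morrison formula gives the scalar identity $X_{n+1}^{\top}\hat\Sigma_{n+1}^{-1}X_{n+1}=(n+1)\frac{W}{1+W}$, and the same computation with index $i$ in place of $n+1$ yields $X_i^{\top}\hat\Sigma_{n+1}^{-1}X_i=(n+1)\frac{W_i}{1+W_i}$ for suitable $W_i$ (with $W_{n+1}=W$). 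Summing over $i\in[n+1]$ and using $\sum_{i=1}^{n+1}X_iX_i^{\top}=(n+1)\hat\Sigma_{n+1}$ gives $\sum_{i=1}^{n+1}\frac{W_i}{1+W_i}=\Tr\paren*{\hat\Sigma_{n+1}^{-1}\hat\Sigma_{n+1}}=d$, so by exchangeability of $(W_1,\dots,W_{n+1})$ we get $\Mean\bracket*{\frac{W}{1+W}}=\frac{d}{n+1}$. Since $\frac{w}{1+w}=w-w^2+\frac{w^3}{1+w}\ge w-w^2$ for $w\ge0$,
\begin{align}
\frac1n\Mean\bracket*{\Tr\paren*{\hat\Sigma_n^{-1}}}=\Mean\bracket*{W}\le\frac{d}{n+1}+\Mean\bracket*{W^2}<\frac{d}{n}+\Mean\bracket*{W^2},
\end{align}
and the whole problem reduces to proving $\Mean\bracket*{W^2}\le8C'\kappa\paren*{\frac{d}{n}}^2$.

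For the remainder I would bound $W\le\frac1n\lambda_{\min}(\hat\Sigma_n)^{-1}\norm{X_{n+1}}^2$ and use the independence of $X_{n+1}$ and $\hat\Sigma_n$ to get $\Mean\bracket*{W^2}\le\frac{\kappa d}{n^2}\Mean\bracket*{\lambda_{\min}(\hat\Sigma_n)^{-2}}$. \cref{thm:eig-inq-tail} with $\Sigma=I$ gives $\p\cbrace*{\lambda_{\min}(\hat\Sigma_n)<t}\le(C't)^{\nicefrac{\alpha n}{6}}$, so writing $\Mean\bracket*{\lambda_{\min}(\hat\Sigma_n)^{-2}}=\int_0^\infty 2t^{-3}\p\cbrace*{\lambda_{\min}(\hat\Sigma_n)<t}\,dt$ and splitting at $t_\star=\frac{1}{2C'}$ bounds the $t\ge t_\star$ piece by $t_\star^{-2}=4(C')^2$ and the $t<t_\star$ piece by an exponentially small $O\paren*{(C')^2 2^{-\nicefrac{\alpha n}{6}}}$ term, finite once $n>\nicefrac{12}{\alpha}$ (which the hypothesis on $n$ ensures). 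Hence $\Mean\bracket*{W^2}$ is of the claimed order $O\paren*{\kappa\paren*{\frac{d}{n}}^2}$, and tracking the universal constants — choosing $t_\star$ so that the bad-event piece does not inflate the bound — brings it to at most $8C'\kappa\paren*{\frac{d}{n}}^2$.

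The main obstacle is precisely this last step. The leading term $\nicefrac{d}{n}$ falls out exactly and for free from the leave-one-out identity, so essentially no work is needed there; the delicate part is pinning the remainder down to the precise constant $8C'\kappa$, which forces a careful choice of the threshold $t$ in \cref{thm:eig-inq-tail} — small enough that $\p\cbrace*{\hat\Sigma_n\not\succeq tI}$ is negligible, but not so small that $t^{-2}$ blows up the constant — together with a verification that the contribution of the low-probability event $\cbrace*{\hat\Sigma_n\not\succeq tI}$ is genuinely lower order than $\nicefrac{d}{n}$. This is exactly where the hypothesis $n\ge 6\alpha^{-1}d\land12\alpha^{-1}\ln\paren{12\alpha^{-1}}$ enters.
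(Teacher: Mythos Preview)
This statement is not proved in the paper: it is quoted verbatim as Theorem~3 of \textcite{Mourtada2022ExactMatrices} and invoked as a black box in the proof of \cref{thm:decomp-fifth}. There is therefore no in-paper proof to compare your proposal against.

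That said, your sketch is essentially Mourtada's own argument: reduce to the isotropic case, use the leave-one-out Sherman--Morrison identity to obtain $\Mean\bracket*{W/(1+W)}=d/(n+1)$ exactly, rewrite $W\le \frac{W}{1+W}+W^2$, and control the remainder $\Mean\bracket*{W^2}$ via the fourth-moment hypothesis together with the lower tail for $\lambda_{\min}(\hat\Sigma_n)$ from \cref{thm:eig-inq-tail}. Two points are worth flagging. First, the moment hypothesis as printed in the paper appears to be a typo: it should read $\Mean\bracket*{\norm{\Sigma^{-\nicefrac{1}{2}}X}^4}\le\kappa d^2$, not $\kappa d$; this is the only version consistent with the verification $\kappa=3$ carried out in the proof of \cref{thm:decomp-fifth} (where the authors compute $\Mean\bracket*{\norm{\Sigma^{-\nicefrac{1}{2}}X}^4}=d^2+2d\le 3d^2$) and with the final constant $504e^{10}=8\cdot 21e^{10}\cdot 3=8C'\kappa$. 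Second, your crude bound $\Mean\bracket*{\lambda_{\min}(\hat\Sigma_n)^{-2}}\lesssim (C')^2$ via tail integration would give a remainder of order $(C')^2\kappa(d/n)^2$, not $8C'\kappa(d/n)^2$; obtaining the stated constant requires working on the good event $\cbrace*{\hat\Sigma_n\succeq t\Sigma}$ with a single well-chosen $t$ and using the assumption $n\ge 6\alpha^{-1}d$ to make the bad-event contribution genuinely lower order rather than merely bounded. You correctly identify this constant-tracking as the delicate step, but as written the sketch does not close it.
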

\begin{proof}[Proof of \cref{thm:decomp-fifth}]
We can easily confirm that $\hat\beta'_s \sim N(\beta^*_s, \frac{\sigma^2_\xi}{n'_{1,s}}(\Sigma'_{1,s})^{-1})$ conditioned on $n_\cdot$ and $X'_{1,s}$, where $\Sigma'_{1,s} = \frac{1}{n'_{1,s}}X'_{1,s}(X'_{1,s})^\top$. Noting that $\Mean\bracket{(\hat\beta'_s - \beta^*_s)(\hat\beta'_s - \beta^*_s)^\top|X,n_\cdot} = \frac{\sigma^2_\xi}{n'_{1,s}}(\Sigma'_{1,s})^{-1}$, we have
\begin{align}
    & \Mean\bracket*{\norm*{\Sigma_{m,s}^{\nicefrac{1}{2}}\paren*{\hat\beta'_{s'} - \beta^*_{s}}}^2\middle|n_\cdot} \\
    =& \Tr\paren*{\Sigma_{s} \Mean\bracket*{\paren*{\hat\beta'_{s'} - \beta^*_{s}}\paren*{\hat\beta'_{s'} - \beta^*_{s}}^\top \middle|n_\cdot}} \\
    =& \frac{\sigma^2_\xi}{n'_{1,s}}\Mean\bracket*{\Tr\paren*{\Sigma_{s} \paren*{\Sigma'_{1,s}}^{-1}}\middle|n_\cdot}. \label{eq:bd-tr}
\end{align}
We apply \cref{thm:tr-err} to the expected trace term in \cref{eq:bd-tr}. To do so, we need to check $X'_{1,s}$ satisfies the small-ball condition of \cref{eq:small-ball} and the kurtosis condition $\Mean\bracket{\norm{\Sigma_s^{-\nicefrac{1}{2}}X'_{1,s}}^4} \le \kappa d$. 

The small-ball condition is confirmed by the same manner in the proof of \cref{lem:exp-max-eig-inv}, with $C = 7^{\nicefrac{1}{4}}$ and $\alpha = 1$. Here, we prove the satisfication of the kurtosis condition.  For a multivariate Gaussian random variable $X \sim N(\mu,\Lambda)$ such that $\lambda_{\min}(\Lambda) > 0$, $\Sigma \coloneqq \Mean\bracket{XX^\top} = \mu\mu^\top + \Lambda$, and $\norm{\Sigma^{-\nicefrac{1}{2}}X}^4 = \abrace{X, \Sigma^{-1}X}$ 
\begin{align}
    \Mean\bracket*{\norm*{\Sigma^{-\nicefrac{1}{2}}X}^4} =& \Mean\bracket*{\abrace{X, \Sigma^{-1}X}^2} \\
    =& \Var\bracket*{\abrace{X, \Sigma^{-1}X}} + \paren*{\Mean\bracket*{\abrace{X, \Sigma^{-1}X}}}^2.
\end{align}
Since we have
\begin{align}
    \Mean\bracket*{\abrace{X, \Sigma^{-1}X}} =& \Tr\paren*{\Sigma^{-1}\paren*{\mu\mu^\top + \Lambda}} = d \\
    \Var\bracket*{\abrace{X, \Sigma^{-1}X}} =& 2\Tr\paren*{\Sigma^{-1}\Lambda\Sigma^{-1}\paren*{\mu\mu^\top + \Lambda}} + 2\Tr\paren*{\Sigma^{-1}\Lambda\Sigma^{-1}\mu\mu^\top} \\
    =& 2\Tr\paren*{\Sigma^{-1}\Lambda} + 2\Tr\paren*{\Sigma^{-1}\Lambda\Sigma^{-1}\mu\mu^\top} \\
    =& 2\Tr\paren*{\Sigma^{-1}\paren*{\mu\mu^\top + \Lambda}} - 2\Tr\paren*{\mu\mu^\top\Sigma^{-1}\mu\mu^\top} \\
    =& 2d - 2\norm{\mu}^2\abrace*{\mu, \Sigma^{-1}\mu} \le 2d.
\end{align}
Hence, the kurtosis condition satisfies with $\kappa = 3$.

Application of \cref{thm:tr-err} into \cref{eq:bd-tr} yields
\begin{align}
    \Mean\bracket*{\norm*{\Sigma_{m,s}^{\nicefrac{1}{2}}\paren*{\hat\beta'_{s'} - \beta^*_{s}}}^2\middle|n_\cdot} \le \frac{\sigma^2_\xi d}{n'_{1,s}} + 504e^{10}\sigma^2_\xi\paren*{\frac{d}{n'_{1,s}}}^2,
\end{align}
provided that $n_{s} \ge 12d$. We get the claim from the fact that $n'_{1,s} \ge \floor{n_s/2} \ge n_s/4$ for $n_s \ge 4$,. 
\end{proof}

\subsubsection{Estimation Error Analysis for $\hat\mu'_{s}$}
Here, we will prove the following theorem.
\begin{theorem}\label{thm:decomp-sixth}
  Given $s \in [M]$ and $v \in \RealSet^d$, if $n_s > 12d$, we have
 \begin{align}
     \Mean\bracket*{\abrace*{v,\hat\mu'_{s} - \mu_{s}}^2\middle|n_\cdot} \le  \frac{4\sigma^2_X\norm{v}^2}{n_{s}}.
 \end{align}
\end{theorem}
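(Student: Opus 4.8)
The plan is to mirror the argument given for $\hat\mu_s$ in the proof of \cref{thm:decomp-first}, since $\hat\mu'_s$ is again simply an empirical mean of i.i.d.\ Gaussian vectors. Because we are assuming $n_s > 12d$, the definition of $\hat\mu'_s$ reduces to $\hat\mu'_s = \frac{1}{n'_{2,s}}\sum_{i=1}^{n'_{2,s}}X'_{2,s,i}$, where the $X'_{2,s,i}$ are i.i.d.\ copies of $X \sim N(\mu_s,\sigma^2_XI)$ (they form the subset $D'_{2,s}$ of the group-wise duplicate $D'_s$). Conditioned on $n_\cdot$, the count $n'_{2,s}$ is fixed, so $\hat\mu'_s - \mu_s$ is a centered Gaussian vector with covariance matrix $\frac{\sigma^2_X}{n'_{2,s}}I$.

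First I would rewrite the quantity of interest as a quadratic form in $v$:
\begin{align}
    \Mean\bracket*{\abrace*{v,\hat\mu'_{s} - \mu_{s}}^2\middle|n_\cdot} = \abrace*{v, \Mean\bracket*{\paren*{\hat\mu'_s - \mu_s}\paren*{\hat\mu'_s - \mu_s}^\top\middle|n_\cdot}v}.
\end{align}
Substituting the covariance identity $\Mean\bracket{(\hat\mu'_s - \mu_s)(\hat\mu'_s - \mu_s)^\top\mid n_\cdot} = \frac{\sigma^2_X}{n'_{2,s}}I$ gives the value $\frac{\sigma^2_X}{n'_{2,s}}\norm{v}^2$. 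Finally I would invoke the sample-splitting guarantee $n'_{2,s} \ge \floor{n_s/2} \ge n_s/4$, which holds because $n_s > 12d$ forces $n_s \ge 2$, to conclude $\frac{\sigma^2_X}{n'_{2,s}}\norm{v}^2 \le \frac{4\sigma^2_X\norm{v}^2}{n_s}$, the claimed bound.

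There is no genuine obstacle here: the statement is a routine second-moment computation for the empirical mean of a Gaussian sample, entirely parallel to \cref{thm:decomp-first} (the only difference is the constant $4$ in place of $6$, reflecting that $D'_s$ is partitioned into two pieces rather than three). The single point deserving a line of care is that, conditioned on $n_\cdot$ alone, the vectors $X'_{2,s,i}$ are still i.i.d.\ $N(\mu_s,\sigma^2_XI)$ — this follows since $n_\cdot$ records only the group sizes while the within-group observations are exchangeable — and once this is noted, the covariance identity and the floor inequality close the proof immediately.
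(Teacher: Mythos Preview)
Your proposal is correct and follows essentially the same approach as the paper: the paper's proof simply notes that $\hat\mu'_s \sim N(\mu_s,\tfrac{\sigma^2_X}{n'_{2,s}}I)$ conditioned on $n_\cdot$, computes the projected second moment to get $\tfrac{\sigma^2_X\norm{v}^2}{n'_{2,s}}$, and then invokes $n'_{2,s}\ge\lfloor n_s/2\rfloor\ge n_s/4$ exactly as you do. Your write-up is, if anything, slightly more explicit than the paper's.
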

\begin{proof}[Proof of \cref{thm:decomp-sixth}]
    By definition, we have $\hat\mu'_s \sim N(\mu_s,\frac{\sigma^2_X}{n'_{2,s}}I)$ conditioned on $n_\cdot$ for $n_s > 12d$. Hence, we have
    \begin{align}
        \Mean\bracket*{\abrace*{v,\hat\mu'_{s} - \mu_{s}}^2\middle|n_\cdot} \le \frac{\sigma^2_X\norm{v}^2}{n'_{2,s}}.
    \end{align}
    We get the claim following the same manner of the proof of \cref{thm:decomp-fifth}.
\end{proof}

\subsection{Some Auxiliary Lemmas}
This subsections introduce some auxiliary lemmas for use to prove \cref{thm:loose-upper}. Specifically, we demonstrate the following lemmas:
\begin{lemma}\label{lem:diff-p}
    Let $a_1,...,a_M \in \RealSet$ be arbitrary numbers. Then, we have
    \begin{align}
        \Mean\bracket*{\paren*{\sum_{s \in [M]}a_s\paren*{\hat{p}_s - p_s}^2}} = \frac{1}{n}\Var\bracket*{a_S}.
    \end{align}
\end{lemma}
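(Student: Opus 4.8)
The plan is to recognize the quantity on the left as the variance of a sample mean of i.i.d.\ random variables. I read the left-hand side as $\Mean\bracket*{\paren*{\sum_{s\in[M]}a_s(\hat p_s - p_s)}^2}$, i.e.\ with the square taken over the whole sum; this is the form that appears in the sixth term of \cref{eq:decompose} (with $a_{s}=\abrace{\beta^*_s,\mu_s}$) and the only reading consistent with the $\Var\bracket{a_S}$ on the right-hand side.

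First I would rewrite the weighted histogram count as a sum over the sample. Since $\hat p_s = n_s/n$ and $n_s = \sum_{i=1}^n \ind\cbrace{S_i = s}$, linearity gives $\sum_{s\in[M]} a_s\hat p_s = \frac1n\sum_{i=1}^n\sum_{s\in[M]} a_s\ind\cbrace{S_i=s} = \frac1n\sum_{i=1}^n a_{S_i}$. Likewise $\sum_{s\in[M]} a_s p_s = \sum_{s\in[M]} a_s\,\p\cbrace{S=s} = \Mean\bracket{a_S}$. Subtracting, $\sum_{s\in[M]}a_s(\hat p_s - p_s) = \frac1n\sum_{i=1}^n\paren*{a_{S_i} - \Mean\bracket{a_{S_i}}}$, a centered average of i.i.d.\ terms.

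Next I would square and take expectations: $\Mean\bracket*{\paren*{\sum_{s\in[M]}a_s(\hat p_s - p_s)}^2} = \Var\bracket*{\frac1n\sum_{i=1}^n a_{S_i}}$. Because $S_1,\dots,S_n$ are independent, all cross terms vanish, so this equals $\frac{1}{n^2}\sum_{i=1}^n \Var\bracket{a_{S_i}} = \frac1n\Var\bracket{a_S}$, which is the claim.

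There is essentially no obstacle here; the only point requiring care is the bookkeeping in the first step, namely the identity $\sum_{s\in[M]} a_s\ind\cbrace{S_i=s}=a_{S_i}$ that collapses the $M$-term weighted sum to a single function of $S_i$. Once that is in place, the result is the textbook variance-of-the-mean computation. (For contrast, if the left-hand side were literally $\Mean\bracket{\sum_{s\in[M]} a_s(\hat p_s-p_s)^2}$ with the square inside, one would instead use $\Var\bracket{n_s}=np_s(1-p_s)$ to get $\frac1n\sum_{s\in[M]} a_s p_s(1-p_s)$, which is not $\frac1n\Var\bracket{a_S}$ in general; this confirms the square must be over the whole sum.)
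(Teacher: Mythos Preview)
Your proof is correct and arguably cleaner than the paper's. You recast $\sum_{s}a_s\hat p_s$ as the sample mean $\tfrac1n\sum_{i}a_{S_i}$ and then invoke the variance-of-the-mean identity for i.i.d.\ summands, so the answer $\tfrac1n\Var\bracket{a_S}$ drops out in one line. The paper instead expands the square directly using the second-order moments of the multinomial vector $n\hat p_\cdot$, namely $\Var\bracket{n\hat p_s}=np_s(1-p_s)$ and $\mathrm{Cov}(n\hat p_s,n\hat p_{s'})=-np_sp_{s'}$, obtaining $\sum_s \tfrac{a_s^2 p_s(1-p_s)}{n}-\sum_{s\neq s'}\tfrac{a_sa_{s'}p_sp_{s'}}{n}$ and then simplifying this algebraically to $\tfrac1n\Var\bracket{a_S}$. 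Your route avoids that bookkeeping by linearizing before squaring; the paper's route is more mechanical but makes the multinomial covariance structure explicit. Both are valid and yield the same result; your observation about the placement of the square is also correct and matches how the lemma is actually used in \cref{eq:error-hat-p} and in bounding $e_{\mathrm{prob}}^2$.
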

\begin{lemma}\label{lem:inv-n}
    For a constant $c > 0$, we have for any $s \in [M]$
    \begin{align}
        \Mean\bracket*{n_s^{-1}\ind\cbrace*{n_s > c}} \le \frac{1 + c^{-1}}{p_s(n+1)}.
    \end{align}
\end{lemma}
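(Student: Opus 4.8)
The plan is to compare $n_s^{-1}$ with $(1+n_s)^{-1}$ on the event $\{n_s > c\}$ and then invoke the classical closed form for the negative-first moment of a shifted binomial; recall that $n_s \sim \mathrm{Bin}(n,p_s)$. The point is that $1/n_s$ is awkward because it is unbounded near $n_s = 0$, whereas $1/(1+n_s)$ is both bounded and has an exact expectation, so truncating at $n_s > c$ lets us pass from one to the other at the cost of the factor $1 + c^{-1}$.

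First I would record the pointwise bound. On the event $\{n_s > c\}$ we have $1/n_s < c^{-1}$, hence $\tfrac{1+n_s}{n_s} = 1 + \tfrac{1}{n_s} < 1 + c^{-1}$, and since all quantities are nonnegative and $\ind\cbrace*{n_s > c} \le 1$,
\begin{align}
\frac{1}{n_s}\ind\cbrace*{n_s > c} \;=\; \frac{1+n_s}{n_s}\cdot\frac{1}{1+n_s}\ind\cbrace*{n_s > c} \;\le\; \paren*{1+c^{-1}}\frac{1}{1+n_s}.
\end{align}
Taking expectations reduces the claim to showing $\Mean\bracket*{(1+n_s)^{-1}} \le \tfrac{1}{p_s(n+1)}$.

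Second, I would establish the identity $\Mean\bracket*{(1+n_s)^{-1}} = \tfrac{1-(1-p_s)^{n+1}}{(n+1)p_s}$ by a direct summation. Using $\tfrac{1}{k+1}\binom{n}{k} = \tfrac{1}{n+1}\binom{n+1}{k+1}$,
\begin{align}
\Mean\bracket*{\frac{1}{1+n_s}} \;=\; \sum_{k=0}^{n}\frac{1}{k+1}\binom{n}{k}p_s^{k}(1-p_s)^{n-k} \;=\; \frac{1}{(n+1)p_s}\sum_{k=0}^{n}\binom{n+1}{k+1}p_s^{k+1}(1-p_s)^{n-k},
\end{align}
and the last sum is $\sum_{j=1}^{n+1}\binom{n+1}{j}p_s^{j}(1-p_s)^{n+1-j} = 1 - (1-p_s)^{n+1}$ by the binomial theorem. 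Bounding $1 - (1-p_s)^{n+1} \le 1$ and combining with the first step gives the lemma. There is essentially no obstacle beyond recalling this shifted‑binomial moment identity; the only care needed is the index shift $k \mapsto k+1$ in the sum and applying the truncation bound $1-(1-p_s)^{n+1}\le 1$ in the correct direction.
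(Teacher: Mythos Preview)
Your proof is correct and follows essentially the same route as the paper: both use the pointwise bound $\tfrac{1}{n_s}\ind\{n_s>c\}\le(1+c^{-1})\tfrac{1}{n_s+1}$ and then the identity $\Mean[(1+n_s)^{-1}]=\tfrac{1-(1-p_s)^{n+1}}{(n+1)p_s}\le\tfrac{1}{(n+1)p_s}$. The only difference is cosmetic: the paper cites \textcite{Chao1972NegativeVariables} for the identity, whereas you derive it directly via the binomial index shift.
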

\begin{lemma}\label{lem:small-n}
    Let $c > 0$ be a constant. If $n > 2c/\min_{s\in[M]}p_s$,we have for any $s \in [M]$
    \begin{align}
        \p\cbrace*{n_s \le c} \le e^{-np_s/8}.
    \end{align}
\end{lemma}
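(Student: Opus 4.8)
\textit{Sketch.} The plan is to observe that $n_s = \sum_{i=1}^n \ind\cbrace{S_i = s}$ is distributed as $\mathrm{Binomial}(n,p_s)$ with mean $np_s$, and to control its lower tail by a standard Chernoff argument. First I would fix an arbitrary $t > 0$ and apply Markov's inequality to $e^{-t n_s}$, obtaining $\p\cbrace{n_s \le c} \le e^{tc}\Mean\bracket{e^{-t n_s}}$. Evaluating the binomial moment-generating function gives $\Mean\bracket{e^{-t n_s}} = (1 - p_s + p_s e^{-t})^n = (1 - p_s(1 - e^{-t}))^n \le \exp(-np_s(1 - e^{-t}))$, where the last inequality is $1 - y \le e^{-y}$ applied to $y = p_s(1 - e^{-t}) \ge 0$. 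Hence $\p\cbrace{n_s \le c} \le \exp(tc - np_s(1 - e^{-t}))$ for every $t > 0$.

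Next I would make the explicit choice $t = \ln 2$, so that $1 - e^{-t} = 1/2$ and the bound collapses to $\exp(c\ln 2 - np_s/2)$. The hypothesis $n > 2c/\min_{s'\in[M]}p_{s'}$ forces $np_s > 2c$, i.e.\ $c < np_s/2$, for every $s$; combined with $\ln 2 \le 3/4$ this yields $c\ln 2 \le 3np_s/8$, so the exponent is at most $3np_s/8 - np_s/2 = -np_s/8$, which is precisely the asserted bound. An equivalent route is to quote the multiplicative Chernoff bound $\p\cbrace{n_s \le (1-\delta)np_s} \le e^{-\delta^2 np_s/2}$ and note that $c < np_s/2$ makes the relative deviation $\delta = 1 - c/(np_s)$ exceed $1/2$, hence $\delta^2 \ge 1/4$.

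I do not expect any substantive obstacle here: the computation is routine, and the only care needed is in tracking the numerical constant. This is exactly why the mild condition $n > 2c/\min_{s'}p_{s'}$ (equivalently $c < np_s/2$) is imposed in the statement — it leaves enough slack between $c\ln 2$ and $np_s/2$ to absorb the remaining factors into the clean $e^{-np_s/8}$ form used downstream to bound the contribution of the small-$n_s$ events.
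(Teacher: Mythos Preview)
Your proposal is correct and essentially coincides with the paper's argument. The paper simply quotes the Chernoff bound in the form $\p\cbrace{n_s \le c} \le \exp\paren{-\frac{n}{2p_s}(c/n - p_s)^2}$ and observes that $c \le np_s/2$ forces the exponent to be at least $np_s/8$; this is exactly your ``equivalent route'' via the multiplicative Chernoff bound with $\delta = 1 - c/(np_s) \ge 1/2$, while your primary route just derives the same inequality from the MGF explicitly.
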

\begin{proof}[Proof of \cref{lem:diff-p}]
    Since $n\hat{p}_\cdot$ follows the multinomial distribution with the parameters $n$ and $p_\cdot$, using the variance and covariance of the multinomial distribution, we have
     \begin{align}
         & \Mean\bracket*{\paren*{\sum_{s\in[M]}a_s\paren*{\hat{p}_s-p_s}}^2} \\
         =& \sum_{s \in [M]}\frac{a^2_sp_s(1-p_s)}{n} - \sum_{s,s'\in[M]:s \ne s'}\frac{a_sa_{s'}p_sp_{s'}}{n} \\
         =& \sum_{s \in [M]}\frac{a_sp_s}{n}\paren*{a_s - \sum_{s'\in[M]}a_{s'}p_{s'}}.
     \end{align}
     Let $\bar{a} = \sum_{s\in[M]}p_sa_s$. Then, we have
     \begin{align}
         & \sum_{s \in [M]}a_sp_s\paren*{a_s - \bar{a}} \\
         =&  \sum_{s \in [M]}p_s\paren*{a_s - \bar{a}}^2 + \sum_{s\in[M]}\bar{a}p_s\paren*{a_s - \bar{a}} \\
         =& \sum_{s \in [M]}p_s\paren*{a_s - \bar{a}}^2 = \Var\bracket*{a_S}.
     \end{align}
     Hence, 
     \begin{align}
         \Mean\bracket*{\paren*{\sum_{s\in[M]}a_s\paren*{\hat{p}_s-p_s}}^2} = \frac{1}{n}\Var\bracket*{a_S}. \label{eq:error-hat-p}
     \end{align}
\end{proof}
\begin{proof}[Proof of \cref{lem:inv-n}]
For a random variable $X$ following the binomial distribution with the parameters $n$ and $p$, $\Mean\bracket{\frac{1}{X+1}}=\frac{1}{p(n+1)}(1-(1-p)^{n+1})$~\autocite{Chao1972NegativeVariables}. Since $n_s$ follows the binomial distribution with the parameters $n$ and $p_s$, we have
 We have
 \begin{align}
     & \Mean\bracket*{n_s^{-1}\ind\cbrace*{n_s > c}} \\
     =& \Mean\bracket*{\frac{1}{n_s+1}\paren*{1 + \frac{1}{n_s}}\ind\cbrace*{n_s > c}} \\
     \le& \paren*{1 + c^{-1}}\Mean\bracket*{\frac{1}{n_s+1}} \\
     =& \frac{1 + c^{-1}}{p_s(n+1)}\paren*{1 - (1-p_s)^{n+1}} \le \frac{1 + c^{-1}}{p_s(n+1)}. \label{eq:inv-size}
 \end{align}
\end{proof}
\begin{proof}[Proof of \cref{lem:small-n}]
 From the Chernoff bound, we have 
 \begin{align}
     \p\cbrace*{n_s \le c} \le \exp\paren*{-\frac{n}{2p_s}\paren*{\frac{c}{n} - p_s}^2}.
 \end{align}
 Under the assumption, we have $c \le np_s/2$. Then, we have $\frac{n}{2p_s}\paren*{\frac{c}{n} - p_s}^2 \ge n/8$, which gives the claim.
\end{proof}

\subsection{Proof of \crtcref{thm:loose-upper}}
\begin{proof}[Proof of \cref{thm:loose-upper}]
 We begin by characterizing the estimation error by each component's estimation error shown in \cref{thm:decomp-first,thm:decomp-second,thm:decomp-forth,thm:decomp-fifth,thm:decomp-sixth}. Specifically, we characterize the estimation error using the following error terms:
 \begin{align}
     e^2_{\mathrm{mean},s} =& \sup_{v \in \mathbb{S}_{d-1}} \Mean\bracket*{\abrace*{v,\mu_s - \hat\mu_s}^2\middle| n_\cdot} \\
     e^2_{\mathrm{norm}} =& \Mean\bracket{\paren{\overline{\norm{\beta^*_\cdot}} - \widehat{\norm*{\beta_\cdot}}}^2| n_\cdot} \\
     e^2_{\mathrm{coef},s} =& \Mean\bracket{\norm{\tilde\beta_s - \frac{\beta^*_s}{\norm{\beta^*_s}}}^2|n_\cdot} \\
     e^2_{\mathrm{coef}',s} =& \Mean\bracket*{\norm*{\Sigma_{s}^{\nicefrac{1}{2}}\paren*{\hat\beta'_{s} - \beta^*_{s}}}^2\middle|n_\cdot} \\
     e^2_{\mathrm{mean}',s} =& \sup_{v \in \mathbb{S}_{d-1}} \Mean\bracket*{\abrace{\hat\mu'_s - \mu_s, v}^2\middle|n_\cdot} \\
     e^2_{\mathrm{prob}} =& \abs*{\sum_{s' \in [M]}\paren*{\hat{p}_{s'} - p_{s'}}\abrace*{\beta^*_{s'},\mu_{s'}}}^2,
 \end{align}
 where $\Sigma_x = \Mean\bracket{X_sX_s^\top}$ for $X_s \sim N(\mu_s,\sigma^2_XI)$.

 We analyze each term in \cref{eq:decompose} one by one.

 (First term in \cref{thm:decompose}) Recall the first term in \cref{thm:decompose}
 \begin{align}
     \Mean\bracket*{\widehat{\norm*{\beta_\cdot}}^2\middle| n_\cdot}^{\nicefrac{1}{2}}\Mean\bracket*{\abrace*{\tilde\beta_s,\mu_s - \hat\mu_s}^2\middle| n_\cdot}^{\nicefrac{1}{2}}.
 \end{align}
 From the Cauchy–Schwarz inequality, we have
 \begin{align}
     \Mean\bracket*{\widehat{\norm*{\beta_\cdot}}^2\middle| n_\cdot}^{\nicefrac{1}{2}} \le& \Mean\bracket*{\overline{\norm*{\beta^*_\cdot}}^2\middle| n_\cdot}^{\nicefrac{1}{2}} + \Mean\bracket*{\paren*{\overline{\norm{\beta^*_\cdot}} - \widehat{\norm*{\beta_\cdot}}}^2\middle| n_\cdot}^{\nicefrac{1}{2}} \\
     \le& B + e_{\mathrm{norm}}.
 \end{align}
 
 By definition, $\tilde\beta_s = 0$ for $n_s \le 18d$, which indicates that 
 \begin{align}
     \Mean\bracket*{\abrace*{\tilde\beta_s,\mu_s - \hat\mu_s}^2\middle| n_\cdot}^{\nicefrac{1}{2}} = 0.
 \end{align}
 In the case of $n_s > 18d$, by utilizing the fact $\tilde\beta_s$ and $\hat\mu_s$ are independent conditioned on $n_\cdot$ due to the sample spilitting and $\tilde\beta_s \in \mathbb{S}_{d-1}$, we obtain
 \begin{align}
     \Mean\bracket*{\abrace*{\tilde\beta_s,\mu_s - \hat\mu_s}^2\middle| n_\cdot} \le& \sup_{v \in \mathbb{S}_{d-1}} \Mean\bracket*{\abrace*{v,\mu_s - \hat\mu_s}^2\middle| n_\cdot} = e^2_{\mathrm{mean},s}.
 \end{align}
 Consequently, we have
 \begin{align}
     \Mean\bracket*{\widehat{\norm*{\beta_\cdot}}^2\middle| n_\cdot}^{\nicefrac{1}{2}}\Mean\bracket*{\abrace*{\tilde\beta_s,\mu_s - \hat\mu_s}^2\middle| n_\cdot}^{\nicefrac{1}{2}} \le \paren*{B + e_{\mathrm{norm}}}e_{\mathrm{mean},s}\ind\cbrace*{n_s > 18d}. \label{eq:upper-d1}
 \end{align}

 (Second term in \cref{thm:decompose}) Recall the second term in \cref{thm:decompose}
 \begin{align}
     \sigma_X\Mean\bracket*{\paren*{\widehat{\norm{\beta_\cdot}} - \overline{\norm*{\beta^*_\cdot}}}^2\middle|n_\cdot}^{\nicefrac{1}{2}}.
 \end{align}
 Using the notation of $e_{\mathrm{norm}}$, we have
 \begin{align}
     \sigma_X\Mean\bracket*{\paren*{\widehat{\norm{\beta_\cdot}} - \overline{\norm*{\beta^*_\cdot}}}^2\middle|n_\cdot}^{\nicefrac{1}{2}} = \sigma_Xe_{\mathrm{norm}}. \label{eq:upper-d2}
 \end{align}

 (Third term in \cref{thm:decompose}) Recall the third term in \cref{thm:decompose}
 \begin{align}
     \sigma_X\overline{\norm{\beta^*_\cdot}}\Mean\bracket*{\norm*{\tilde\beta_s - \frac{\beta^*_s}{\norm{\beta^*_s}}}^2\middle|n_\cdot}^{\nicefrac{1}{2}}.
 \end{align}
 Substituting $e_{\mathrm{coef},s}$ yields
 \begin{align}
     \sigma_X\overline{\norm{\beta^*_\cdot}}\Mean\bracket*{\norm*{\tilde\beta_s - \frac{\beta^*_s}{\norm{\beta^*_s}}}^2\middle|n_\cdot}^{\nicefrac{1}{2}} = \sigma_X\overline{\norm{\beta^*_\cdot}}e_{\mathrm{coef},s}. \label{eq:upper-d3}
 \end{align}

 (Fourth term in \cref{thm:decompose}) Recall the fourth term in \cref{thm:decompose}
 \begin{align}
     \Mean\bracket*{\paren*{\sum_{s \in [M]}\hat{p}_{s}\abrace*{\hat\beta'_{s} - \beta^*_{s},\hat\mu'_{s}}}^2\middle|n_\cdot}^{\nicefrac{1}{2}}.
 \end{align}
  Due to the sample splitting, $\hat\beta'_s$ and $\hat\mu'_s$ are mutually independent. Also, we have $\Mean\bracket{\abrace{\hat\beta'_{s} - \beta^*_{s},\hat\mu'_{s}}}=0$. Hence,
  \begin{align}
      & \Mean\bracket*{\paren*{\sum_{s \in [M]}\hat{p}_{s}\abrace*{\hat\beta'_{s} - \beta^*_{s},\hat\mu'_{s}}}^2\middle|n_\cdot} \\
      =& \sum_{s \in [M]}\hat{p}^2_s\Mean\bracket*{\abrace*{\hat\beta'_{s} - \beta^*_{s},\hat\mu'_{s}}^2\middle|n_\cdot} \\
      =& \sum_{s \in [M]}\hat{p}^2_s\Mean\bracket*{\abrace*{\hat\beta'_{s} - \beta^*_{s},\hat\mu'_{s}}^2\middle|n_\cdot}\ind\cbrace*{n_s > 12d}.
  \end{align}
  Since $\Mean\bracket{(\hat\mu'_s)(\hat\mu'_s)^\top} = \mu\mu^\top + \frac{\sigma^2_X}{n'_{2,s}}I = \Sigma_s - \paren{1-\frac{1}{n'_{2,s}}}\sigma^2_XI$, we have
  \begin{align}
      & \Mean\bracket*{\paren*{\sum_{s \in [M]}\hat{p}_{s}\abrace*{\hat\beta'_{s} - \beta^*_{s},\hat\mu'_{s}}}^2\middle|n_\cdot} \\
      =& \sum_{s \in [M]}\hat{p}^2_s\Mean\bracket*{\norm*{\paren*{\Sigma_s - \paren*{1-\frac{1}{n'_{2,s}}}\sigma^2_XI}^{\nicefrac{1}{2}}\paren*{\hat\beta'_{s} - \beta^*_{s}}}^2\middle|n_\cdot}\ind\cbrace*{n_s > 12d} \\
      \le& \sum_{s \in [M]}\hat{p}^2_s\Mean\bracket*{\norm*{\Sigma_s^{\nicefrac{1}{2}}\paren*{\hat\beta'_{s} - \beta^*_{s}}}^2\middle|n_\cdot}\ind\cbrace*{n_s > 12d} = \sum_{s\in[M]}\hat{p}_s^2e^2_{\mathrm{coef}',s}\ind\cbrace*{n_s > 12d}, \label{eq:upper-d4}
  \end{align}
  where we use the fact $\Sigma_s \succeq \Sigma_s - \paren{1-\frac{1}{n'_{2,s}}}\sigma^2_XI$, and for symmetric matrices $A$ and $B$ such that $A \succeq B$, $\abrace{v,Av} \ge \abrace{v, Bv}$ for any $v \in \RealSet$. For $n_s \le 18d$, the error is zero because $\hat\mu'_s = 0$

  (Fifth term in \cref{thm:decompose}) Recall the fifth term in \cref{thm:decompose}
  \begin{align}
      \Mean\bracket*{\paren*{\sum_{s \in [M]}\hat{p}_{s}\abrace*{\beta^*_{s},\hat\mu'_{s} - \mu_{s}}}^2\middle|n_\cdot}^{\nicefrac{1}{2}}.
  \end{align}
  Since $\hat\mu'_s$ are independent, we have
  \begin{align}
      & \Mean\bracket*{\paren*{\sum_{s \in [M]}\hat{p}_{s}\abrace*{\beta^*_{s},\hat\mu'_{s} - \mu_{s}}}^2\middle|n_\cdot} \\
      =& \begin{multlined}[t][.9\textwidth]
          \sum_{s \in [M]}\hat{p}_s^2\Mean\bracket*{\abrace*{\beta^*_{s},\hat\mu'_{s} - \mu_{s}}^2\middle|n_\cdot}\ind\cbrace*{n_s > 12d} \\ + \sum_{s,s' \in [M]}\hat{p}_s\hat{p}_{s'}\ind\cbrace*{n_s < 12d, n_{s'} < 12d}\abrace*{\beta^*_s,\mu_s}\abrace*{\beta^*_{s'},\mu_{s'}} 
      \end{multlined} \\
      \le& \sum_{s \in [M]}\hat{p}_s^2\norm*{\beta^*_s}^2\sup_{v \in \mathbb{S}_{d-1}}\Mean\bracket*{\abrace*{v,\hat\mu'_{s} - \mu_{s}}^2\middle|n_\cdot}\ind\cbrace*{n_s > 12d} + \frac{144d^2M^2}{n^2} \\
      \le& \sum_{s \in [M]}\hat{p}_s^2B^2e^2_{\mathrm{mean}',s}\ind\cbrace*{n_s > 12d} + o\paren*{\frac{1}{n}}. \label{eq:upper-d5}
  \end{align}

  (Sixth term in \cref{thm:decompose}) Recall the sixth term in \cref{thm:decompose}
  \begin{align}
      \abs*{\sum_{s' \in [M]}\paren*{\hat{p}_{s'} - p_{s'}}\abrace*{\beta^*_{s'},\mu_{s'}}},
  \end{align}
  which is equivalent to $e_{\mathrm{prob}}$.

  By combining \cref{thm:decompose,eq:upper-d1,eq:upper-d2,eq:upper-d3,eq:upper-d4,eq:upper-d5}, we get
  \begin{multline}
     \Mean\bracket*{\paren*{\hat{f}_n(X,S) - f^*_{\mathrm{DP}}(X,S)}^2} \\
     \le \sum_{s \in [M]}p_s\Mean\bracket[\Bigg]{\paren[\bigg]{\paren*{B + e_{\mathrm{norm}}}e_{\mathrm{mean},s}\ind\cbrace*{n_s > 18d} + \sigma_Xe_{\mathrm{norm}} + \sigma_X\overline{\norm{\beta^*_\cdot}}e_{\mathrm{coef},s} \\ + \paren*{\sum_{s'\in[M]}\hat{p}_{s'}^2e^2_{\mathrm{coef}',s'}\ind\cbrace*{n_{s'} > 12d}}^{\nicefrac{1}{2}} \\ + \paren*{\sum_{s' \in [M]}\hat{p}_{s'}^2B^2e^2_{\mathrm{mean}',s'}\ind\cbrace*{n_{s'} > 12d} + o\paren*{\frac{1}{n}}}^{\nicefrac{1}{2}} + e_{\mathrm{prob}}}^2}.
  \end{multline}
 The triangle inequality gives that
  \begin{multline}
     \Mean\bracket*{\paren*{\hat{f}_n(X,S) - f^*_{\mathrm{DP}}(X,S)}^2} \\
     \le \sum_{s \in [M]}7p_s\paren[\bigg]{B^2\Mean\bracket*{e^2_{\mathrm{mean},s}\ind\cbrace*{n_s > 18d}} + \Mean\bracket*{e_{\mathrm{norm}}^2}\Mean\bracket*{e^2_{\mathrm{mean},s}\ind\cbrace*{n_s > 18d}} \\ + \sigma^2_X\Mean\bracket*{e^2_{\mathrm{norm}}} + \sigma_X^2\overline{\norm{\beta^*_\cdot}}^2\Mean\bracket*{e^2_{\mathrm{coef},s}}} + \sum_{s \in [M]}7\paren[\Bigg]{\Mean\bracket*{\hat{p}^2_se^2_{\mathrm{coef}',s}\ind\cbrace*{n_s > 12d}} \\ + \Mean\bracket*{\hat{p}_s^2B^2e^2_{\mathrm{mean}',s}\ind\cbrace*{n_s > 12d}}} + 7\Mean\bracket*{e^2_{\mathrm{prob}}} + o\paren*{\frac{1}{n}}.
  \end{multline}

  By applying \cref{thm:decomp-first,lem:inv-n}, we have
  \begin{align}
      \Mean\bracket*{e^2_{\mathrm{mean},s}\ind\cbrace*{n_s > 18d}} \le& \Mean\bracket*{\frac{6\sigma^2_X}{n_s}\ind\cbrace*{n_s > 18d}} \le \frac{6\sigma^2_X}{p_s(n+1)}\paren*{1 + \frac{1}{18d}}.
  \end{align}
  Also, from \cref{thm:decomp-second,lem:small-n,lem:diff-p}, we have
  \begin{align}
      \Mean\bracket*{e_{\mathrm{norm}}^2} \le& \Mean\bracket*{\paren*{\sqrt{\frac{189e^{10}\sigma^2_\xi Md}{\sigma^2_Xn} + \sum_{s\in[M]}\ind\cbrace*{n_s \le 18d}\frac{n_s}{n}} + \abs*{\sum_{s\in[M]}\paren*{\hat{p}_s - p_s}\norm*{\beta^*_s}}}^2} \\
      \le& \frac{378e^{10}\sigma^2_\xi Md}{\sigma^2_Xn} + \sum_{s\in[M]}\Mean\bracket*{\ind\cbrace*{n_s \le 18d}\frac{2n_s}{n}} + \frac{2}{n}\Var\paren*{\norm*{\beta^*_S}} \\
      \le& \frac{378e^{10}\sigma^2_\xi Md}{\sigma^2_Xn} + \sum_{s\in[M]}\p\cbrace*{n_s \le 18d}\frac{36d}{n} + \frac{2\max_s\norm*{\beta^*_s}^2}{n} \\
      \le& \frac{378e^{10}\sigma^2_\xi Md}{\sigma^2_Xn} + \sum_{s\in[M]}\frac{36d}{n}e^{-np_s/8} + \frac{2B^2}{n} \\
      =& \frac{378e^{10}\sigma^2_\xi Md}{\sigma^2_Xn} + \frac{2B^2}{n} + o\paren*{\frac{1}{n}},
  \end{align}
  provided that $n > 36d/\min_{s\in[M]}p_s$. By utilizing \cref{thm:decomp-forth,lem:inv-n,lem:small-n}, we have
  \begin{align}
      \Mean\bracket*{e^2_{\mathrm{coef},s}} \le& \Mean\bracket*{\frac{756e^{10}\sigma^2_\xi d}{\sigma^2_X\norm{\beta^*_s}^2n_{s}}\ind\cbrace*{n_s > 18d}} + \p\cbrace*{n_s \le 18d} \\
      \le& \frac{756e^{10}\sigma^2_\xi d}{p_s\sigma^2_X\norm{\beta^*_s}^2n}\paren*{1 + \frac{1}{18d}} + e^{-np_s/8} \\
      =& \frac{756e^{10}\sigma^2_\xi d}{p_s\sigma^2_X\norm{\beta^*_s}^2n}\paren*{1 + \frac{1}{18d}} + o\paren*{\frac{1}{n}},
  \end{align}
  provided that $n > 36d/\min_{s\in[M]}p_s$. Application of \cref{thm:decomp-fifth} gives
  \begin{align}
      \Mean\bracket*{\hat{p}^2_se^2_{\mathrm{coef}',s}\ind\cbrace*{n_s > 12d}}  \le& \Mean\bracket*{\paren*{\frac{n_s}{n}}^2\paren*{\frac{4\sigma^2_\xi}{n_{s}} + 540e^{10}\sigma^2_\xi\paren*{\frac{4d}{n_s}}^2}\ind\cbrace*{n_s > 12d}} \\
      \le& \Mean\bracket*{\frac{n_s}{n}\ind\cbrace*{n_s > 12d}}\frac{4\sigma^2_\xi}{n_s} + o\paren*{\frac{1}{n}} \le p_s\frac{4\sigma^2_\xi}{n_s} + o\paren*{\frac{1}{n}}.
  \end{align}
  By \cref{thm:decomp-sixth}, we have
  \begin{align}
      \Mean\bracket*{\hat{p}_s^2e^2_{\mathrm{mean}',s}\ind\cbrace*{n_s > 12d}} \le& \Mean\bracket*{\frac{n_s}{n}\frac{4\sigma^2_X}{n}\ind\cbrace*{n_s > 12d}} \le p_s\frac{4\sigma^2_X}{n}.
  \end{align}
  Application of \cref{lem:diff-p} with $a_s = \abrace{\beta^*_s,\mu_s}$ into $e^2_{\mathrm{prob}}$ yields
  \begin{align}
      \Mean\bracket*{e^2_{\mathrm{prob}}} \le& \frac{1}{n}\Var\paren*{\abrace{\beta^*_S,\mu_S}} \le \frac{B^2U^2}{n}. 
  \end{align}

  Synthesizing the results so far, there exists an universal constant $C > 0$ such that 
  \begin{multline}
     \Mean\bracket*{\paren*{\hat{f}_n(X,S) - f^*_{\mathrm{DP}}(X,S)}^2} \\
     \le \sum_{s \in [M]}Cp_s\paren*{\frac{\sigma^2_XB^2}{p_sn} + \frac{\sigma^2_\xi Md}{n} + \frac{\sigma^2_XB^2}{n} + \frac{\sigma^2_\xi \overline{\norm{\beta^*_\cdot}}^2 d}{\norm{\beta^*_s}^2p_sn}} \\ + \sum_{s \in [M]}C\paren*{\frac{p_s\sigma^2_X}{n} + \frac{p_s\sigma^2_X}{n}} + C\frac{B^2U^2}{n} + o\paren*{\frac{1}{n}}.
  \end{multline}
  Consequently, there exists an universal constant $C > 0$ such that
  \begin{multline}
      \Mean\bracket*{\paren*{\hat{f}_n(X,S) - f^*_{\mathrm{DP}}(X,S)}^2} \le \\ C\paren*{\frac{\sigma^2_XB^2M}{n} + \frac{\sigma^2_\xi Md}{n} + \frac{\sigma^2_XB^2}{n} + \frac{\sigma^2_\xi B^2Md}{n} + \frac{\sigma^2_X}{n} + \frac{\sigma^2_X}{n} + \frac{B^2U^2}{n}} + o\paren*{\frac{1}{n}}.
  \end{multline}
  Then, the dominating terms match the claim.
\end{proof}

\section{Details of Lower Bound Analyses}
This section provides the proofs of the lower bound analyses results.

\subsection{Proof of Lower Bound in \cref{thm:main-err}}
\begin{theorem}\label{thm:lower}
 If $M(d-1) > 16$, there exists an universal constant $C > 0$ such that for any $\alpha > 0$ and $\delta \in (0,1)$, 
 \begin{align}
     \mathcal{E}_n\paren*{\alpha,\delta} \ge C\frac{\sigma^2_\xi B^2dM}{n} - o\paren*{\frac{1}{n}}.
 \end{align}
\end{theorem}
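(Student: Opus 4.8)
The plan is to derive the lower bound from \cref{thm:applied-fano}, using the parametrized family \cref{eq:construct} with a carefully chosen norm profile and direction-perturbation, thinned to a packing by the Varshamov--Gilbert bound. First I fix a partition $[M]=\mathcal S_1\cup\mathcal S_2$ (disjoint): put the groups with the largest $p_s$ into $\mathcal S_2$ until $q\coloneqq\sum_{s\in\mathcal S_2}p_s\ge\tfrac12$, which leaves $\abs{\mathcal S_1}=\Theta(M)$. In \cref{eq:construct} I take all $\mu_{v,s}=0$; on $\mathcal S_2$ I set $B_s=B$ and $\epsilon_s=0$ (so these groups carry no signal and their coordinates in $v$ are inert); on $\mathcal S_1$ I set $B_s=\beta_{\min}\coloneqq Bq/(B+q-1)$ and $\epsilon_s^2=\tau/n_s$ with $\tau>0$ to be chosen — admissible because \cref{thm:applied-fano} lets $\hat\Theta$ and $\epsilon$ depend on $n_\cdot$. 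The index $v$ then only enters through its $\mathcal S_1$-coordinates, and Varshamov--Gilbert yields a subfamily with $K\coloneqq\abs{\hat\Theta}\ge 2^{\abs{\mathcal S_1}(d-1)/8}$ and $\sum_{s\in\mathcal S_1}d_H(v_s,v'_s)\ge\abs{\mathcal S_1}(d-1)/8$ for every pair of distinct indices.

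The role of this norm profile is to exploit the structure of \cref{eq:rist-beta}. A short computation gives $\overline{\norm{\beta_\cdot}}=\sum_s p_s B_s=\beta_{\min}(1-q)+Bq=B\beta_{\min}$, so $\overline{\norm{\beta_\cdot}}^2/\beta_{\min}^2=B^2$; and, for $B\ge1$ (which holds whenever $\dom{B}\neq\emptyset$), one verifies $\beta_{\min}\in[\tfrac12,2]$, $\beta_{\min}\le B$, and $\tfrac{\overline{\norm{\beta_\cdot}}^2}{M}\sum_s\norm{\beta_s}^{-2}=\tfrac1M(B^2\abs{\mathcal S_1}+\beta_{\min}^2\abs{\mathcal S_2})\le B^2$, so $\hat\Theta\subset\dom{B}\times\dom{M}$. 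Thus the signal amplitude $\overline{\norm{\beta_\cdot}}^2=\Theta(B^2)$ that drives the separation has been decoupled from the per-group norm $\beta_{\min}^2=\Theta(1)$ that drives the information cost — this is precisely where the factor $B^2$ of the lower bound is produced, and it is the place where the norm-diversity encoded in \cref{eq:rist-beta} is put to use.

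Next I feed parts 1 and 2 of \cref{thm:distinct-params1} into \cref{thm:applied-fano}. Substituting $\epsilon_s^2=\tau/n_s$ cancels $n_s$ in the KL identity of part 2, making it free of $n_\cdot$: $\KL(\pi_{\theta_v\mid n_\cdot},\pi_{\theta_{v'}\mid n_\cdot})=\tfrac{2\sigma_X^2\beta_{\min}^2\tau}{\sigma_\xi^2(d-1)}\sum_{s\in\mathcal S_1}d_H(v_s,v'_s)\le\tfrac{2\sigma_X^2\beta_{\min}^2\tau\abs{\mathcal S_1}}{\sigma_\xi^2}$. Choosing $\tau=\tfrac{\sigma_\xi^2(d-1)\ln2}{64\,\sigma_X^2\beta_{\min}^2}$ then keeps $\inf_\pi K^{-1}\sum_{\theta}\KL(\pi_\theta,\pi)\le\max_{v,v'}\KL\le\tfrac14\ln K$ for every $n_\cdot$, so the bracket $1-\tfrac{\inf_\pi K^{-1}\sum_\theta\KL+\ln 2}{\ln K}$ in \cref{thm:applied-fano} is at least a positive universal constant $c_1$ (the hypothesis $M(d-1)>16$ is what forces $\ln K$ large enough). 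For the separation bound of part 1, which comes from \cref{thm:two-point-lower}, the substitution $\epsilon_s^2=\tau/n_s$ turns the weight $p_s$ into $p_s/n_s$, so on the event $E=\cbrace{\forall s,\ np_s/2\le n_s\le2np_s}$ — whose complement has probability $O(Me^{-n\min_s p_s/8})=o(1)$ by \cref{lem:small-n} and a Chernoff upper tail — all weights $p_s/n_s$ lie in $[\tfrac1{2n},\tfrac2n]$, whence for every pair $\inf_{f}\mathcal E(f;\theta_v)\lor\mathcal E(f;\theta_{v'})\ge\tfrac{\overline{\norm{\beta_\cdot}}^2\sigma_X^2\tau}{2n(d-1)}\sum_{s\in\mathcal S_1}d_H(v_s,v'_s)\ge\tfrac{\overline{\norm{\beta_\cdot}}^2\abs{\mathcal S_1}(d-1)\sigma_\xi^2\ln2}{1024\,\beta_{\min}^2 n}=\Theta(\sigma_\xi^2 B^2 dM/n)=:\epsilon_0$. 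Since the KL term is $n_\cdot$-free, the bracket stays $\ge c_1$ for all $n_\cdot$, while the separation is $\ge0$ always and $\ge\epsilon_0$ on $E$; \cref{thm:applied-fano} therefore yields, for any $\alpha>0$ and $\delta\in(0,1)$, $\mathcal E_n(\alpha,\delta)\ge c_1\,\epsilon_0\,\p\cbrace{E}=c_1\,\epsilon_0\,(1-o(1))\ge C\,\tfrac{\sigma_\xi^2 B^2 dM}{n}-o(1/n)$, which is the claim.

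I expect the real difficulty to be the parameter bookkeeping, not any single estimate: one must simultaneously obey \cref{eq:rist-beta} (which caps norm-diversity), keep the averaged KL below $\ln K=\Theta(Md)$, retain $\Theta(M)$ perturbed groups for the factor $M$, force $\overline{\norm{\beta_\cdot}}\gg\beta_{\min}$ for the factor $B^2$, and — most delicately — make the bound independent of $p_\cdot$. The last is exactly what the device $\epsilon_s^2\propto1/n_s$ buys: it converts the $p_s$-weighted separation of \cref{thm:two-point-lower} into an \emph{unweighted} sum of Hamming distances, which Varshamov--Gilbert bounds below uniformly over pairs, and it also renders the KL free of $n_\cdot$. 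A secondary, purely technical nuisance is reconciling the constants with the exact hypothesis $M(d-1)>16$ (for the smallest admissible $M$ the half-split and the Varshamov--Gilbert size need slight tuning) and checking $\epsilon_s\le1$ so that \cref{eq:construct} is well posed, which holds for all large $n$ and is swept into the $o(1/n)$ term.
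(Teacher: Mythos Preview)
Your proof follows the paper's route: \cref{thm:applied-fano} applied to the family \cref{eq:construct} with $\mu_{v,s}=0$, Varshamov--Gilbert thinning, and $\epsilon_s^2\propto 1/n_s$ so that the KL term in \cref{thm:distinct-params1} becomes $n_\cdot$-free while the separation scales like $1/n$. The execution differs in two places. The paper perturbs all $M$ groups and fixes the norm profile by a one-line continuity argument (set $B_2=\cdots=B_M=B$ and slide $B_1\downarrow 0$ until the diversity expression in \cref{eq:rist-beta} equals $B^2$); you instead partition $[M]=\mathcal S_1\cup\mathcal S_2$, perturb only $\mathcal S_1$, and engineer the explicit value $\beta_{\min}=Bq/(B+q-1)$ to force $\overline{\norm{\beta_\cdot}}^2/\beta_{\min}^2=B^2$ --- more constructive, and it makes the origin of the $B^2$ factor transparent. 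For the outer expectation over $n_\cdot$ the paper computes $\Mean[1/(n_s+1)]$ directly via the negative-moment identity, whereas you condition on the high-probability event $E=\cbrace{\forall s,\ np_s/2\le n_s\le 2np_s}$ and absorb $\p\cbrace{E^c}$ into $o(1/n)$; both give the same rate. One caveat, shared by both arguments: the step that produces the $B^2$ factor tacitly needs $B\ge 1$ (your inequality $\beta_{\min}\le B$ fails otherwise, and the paper's continuity range starts at the value $1$), and this is not literally implied by $\dom{B}\neq\emptyset$ when $p_\cdot$ is non-uniform; but since the second constraint in \cref{eq:rist-beta} keeps $B$ bounded away from $0$, the missing regime has $B=\Theta(1)$ and can be absorbed into the universal constant.
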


\begin{proof}[Proof of \cref{thm:lower}]
 The Varshamov-Gilbert bound guarantees that there exists a subset $\dom{V}' \subseteq \dom{V}$ such that $\abs{\dom{V}'} \ge 2^{M(d-1)/8}$ and $d_H(v_s,v'_s) \ge (d-1)/8$ for any $v,v' \in \dom{V'}$. With the choice of $\epsilon^2_s = (\frac{d-1}{16} - \frac{1}{M})\sigma^2_\xi/2\sigma^2_XB_s^2n_s$, we confirm by \cref{thm:distinct-params1} that $\inf_\pi\frac{1}{K}\sum_{v \in \dom{V}'}\KL\paren*{\pi_{\theta_v|n_\cdot},\pi} \le \max_{v,v'\in\dom{V}'}\KL\paren*{\pi_{\theta_{v'}|n_\cdot},\pi} \le \ln(\abs{\dom{V'}}/4)/2 \ge M(d-1)/16 - 1$. From \cref{thm:distinct-params1} and the fact $d_H(v_s,v'_s) \ge (d-1)/8$, we can apply \cref{thm:applied-fano} with $\epsilon = \sum_{s\in[M]}p_s\frac{(\sum_{s'\in[M]}p_sB_s)^2}{B^2_s}(\frac{d-1}{16} - \frac{1}{M})\sigma^2_\xi/2n_s$. From the fact that $\Mean\bracket{\frac{1}{n_s+1}} = \frac{1}{p_s(n+1)}(1-(1-p_s)^n)$ due to \autocite{Chao1972NegativeVariables}, there exists an universal constant $C > 0$ such that $\Mean[\frac{\epsilon}{2}] \ge C\paren{\frac{1}{M}\sum_{s\in[M]}\frac{(\sum_{s'\in[M]}p_{s'}B_{s'})^2}{B^2_s}}\frac{\sigma^2_\xi Md}{n} - o(\frac{1}{n})$  We can get the claim by confirming that there exists $B_1,...,B_M$ such that $\paren{\frac{1}{M}\sum_{s\in[M]}\frac{(\sum_{s'\in[M]}p_{s'}B_{s'})^2}{B^2_s}} = B^2$ and $B_s \le B$. Because for $B_2 = ... = B_M = B$, tending $B_1$ to 0 results in $\paren{\frac{1}{M}\sum_{s\in[M]}\frac{(\sum_{s'\in[M]}p_{s'}B_{s'})^2}{B^2_s}}$ goes infinity, it is confirmed.
\end{proof}

\subsection{Proof of \crtcref{thm:applied-fano}}
\begin{proof}[Proof of \cref{thm:applied-fano}]
 Since the distribution of $n_\cdot$ is invariant against $\theta \in \Theta$, we have
 \begin{align}
     & \sup_{\theta \in \Theta}\Mean_\theta\bracket*{\mathcal{E}(\hat{f}_n;\theta)} \\
     =& \Mean\bracket*{ \sup_{\theta \in \Theta}\Mean_\theta\bracket*{\mathcal{E}(\hat{f}_n;\theta)\middle|n_\cdot}} \\
     \ge& \Mean\bracket*{ \max_{\theta \in \hat\Theta}\Mean_\theta\bracket*{\mathcal{E}(\hat{f}_n;\theta)\middle|n_\cdot}} \\
     \ge& \Mean\bracket*{ \frac{1}{\abs{\hat\Theta}}\sum_{\theta \in \hat\Theta}\Mean_\theta\bracket*{\mathcal{E}(\hat{f}_n;\theta)\middle|n_\cdot}}.
 \end{align}
 Given $\epsilon$ possibly dependent on $n_\cdot$, application of the Markov inequality yields
 \begin{align}
     & \sup_{\theta \in \Theta}\Mean_\theta\bracket*{\mathcal{E}(\hat{f}_n;\theta)} \\
     \ge& \Mean\bracket*{ \frac{\epsilon}{\abs{\hat\Theta}}\sum_{\theta \in \hat\Theta}\p_\theta\cbrace*{\mathcal{E}(\hat{f}_n;\theta) \ge \epsilon\middle|n_\cdot}}.
 \end{align}
 If $\inf_f \mathcal{E}(f;\theta)\lor\mathcal{E}(f;\theta') \ge \epsilon$ for any $\theta,\theta' \in \hat\Theta$, $\mathcal{E}(f;\theta) < \epsilon$ implies $\mathcal{E}(f;\theta') \ge \epsilon$ for any $\theta' \in \hat\Theta$ such that $\theta \ne \theta'$. Hence, there exists a partion $\cbrace{\dom{F}_\theta}_{\theta \in \hat\Theta}$ of all the measurable functions $f:\RealSet^d\times[M]\to\RealSet$ such that $\cbrace{f : \mathcal{E}(f;\theta) < \epsilon} \subseteq \dom{F}_\theta$ for all $\theta \in \hat\Theta$. Consequently, we have
 \begin{align}
     \sup_{\theta \in \Theta}\Mean_\theta\bracket*{\mathcal{E}(\hat{f}_n;\theta)} \ge \Mean\bracket*{\epsilon\paren*{1-\frac{1}{\abs{\hat\Theta}}\sum_{\theta \in \hat\Theta}\p_\theta\cbrace*{\hat{f}_n \in \dom{F}_\theta\middle|n_\cdot}}}.
 \end{align}
 Application of the Fano's inequality and data processing inequality yields the claim.
\end{proof}

\subsection{Proof of \crtcref{thm:two-point-lower}}
To prove \cref{thm:two-point-lower}, we show the following more tight lower bound.
\begin{theorem}\label{thm:two-point-lower-tight}
     Let $\theta$ and $\theta'$ be the parameters of the distributions such that $ \frac{1}{2\sigma^2_X}\norm*{\mu_s - \mu'_s}^2 \coloneqq d_s < 1$ for all $s \in [M]$. Then, we have
    \begin{multline}
        \inf_{f \in \dom{L}^2}\mathcal{E}(f;\theta)\lor\mathcal{E}(f;\theta') \ge \\ \sum_{s \in [M]}p_s\frac{e^{-d_s}}{4}\paren[\Bigg]{\sigma^2_X\norm*{\frac{\overline{\norm{\beta_\cdot}}\beta_s}{\norm{\beta_s}} - \frac{\overline{\norm{\beta'_\cdot}}\beta'_s}{\norm{\beta'_s}}}^2\paren*{1+\frac{\norm{\mu_s-\mu'_s}^2}{4\sigma^2_X}}^{1+\frac{d}{2}} \\ + \paren[\Bigg]{- \abrace*{\frac{\overline{\norm{\beta_\cdot}}\beta_s}{\norm{\beta_s}} + \frac{\overline{\norm{\beta'_\cdot}}\beta'_s}{\norm{\beta'_s}},\frac{\mu_s-\mu'_s}{2}} \\+ \sum_{s'\in[M]}p_{s'}\paren*{\abrace*{\beta_{s'}-\beta'_{s'},\bar\mu_{s'}} + \abrace*{\beta_{s'}+\beta'_{s'},\frac{\mu_{s'}-\mu'_{s'}}{2}}}}^2\paren*{1+\frac{\norm{\mu_s-\mu'_s}^2}{4\sigma^2_X}}^{\frac{d}{2}}}.
    \end{multline}
\end{theorem}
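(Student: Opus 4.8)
The plan is a two-point (Le Cam / Assouad-flavoured) argument that collapses the minimax quantity to a per-group scalar problem and then evaluates the resulting Gaussian integrals in closed form. Since $\p\cbrace{S=s}=p_s$ is the same under $\theta$ and $\theta'$, for every $f$ we have $\mathcal{E}(f;\theta)\lor\mathcal{E}(f;\theta')\ge\tfrac12\paren{\mathcal{E}(f;\theta)+\mathcal{E}(f;\theta')}$, and this sum is separable over $s$: it equals $\tfrac12\sum_{s\in[M]}p_s\paren{\int(f(\cdot,s)-g_s)^2\varphi_s+\int(f(\cdot,s)-h_s)^2\psi_s}$, where $\varphi_s,\psi_s$ denote the densities of $N(\mu_s,\sigma_X^2I)$, $N(\mu'_s,\sigma_X^2I)$ and $g_s=f_\theta(\cdot,s)$, $h_s=f_{\theta'}(\cdot,s)$. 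Replacing $\dom{L}^2$ by all measurable functions only decreases the infimum, and minimising $(\phi-g_s)^2\varphi_s+(\phi-h_s)^2\psi_s$ pointwise in $\phi(x)$ — the minimiser being $(g_s\varphi_s+h_s\psi_s)/(\varphi_s+\psi_s)$, with value $(g_s-h_s)^2\varphi_s\psi_s/(\varphi_s+\psi_s)$ — yields
\[
\inf_{f\in\dom{L}^2}\mathcal{E}(f;\theta)\lor\mathcal{E}(f;\theta')\ge\frac12\sum_{s\in[M]}p_s\int\paren{g_s(x)-h_s(x)}^2\,\frac{\varphi_s(x)\psi_s(x)}{\varphi_s(x)+\psi_s(x)}\,dx.
\]

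Next I would make the integrand explicit. By \cref{lem:bayes-opt-linear} the difference $g_s-h_s$ is affine; writing $b_s=\overline{\norm{\beta_\cdot}}\beta_s/\norm{\beta_s}$, $b'_s=\overline{\norm{\beta'_\cdot}}\beta'_s/\norm{\beta'_s}$ and using the midpoint $\bar\mu_s=(\mu_s+\mu'_s)/2$ with $\delta_s=(\mu_s-\mu'_s)/2$, one gets $g_s(x)-h_s(x)=\abrace{a_s,x-\bar\mu_s}+\tilde c_s$ with $a_s=b_s-b'_s$ and $\tilde c_s=-\abrace{b_s+b'_s,\delta_s}+\sum_{s'\in[M]}p_{s'}\paren{\abrace{\beta_{s'}-\beta'_{s'},\bar\mu_{s'}}+\abrace{\beta_{s'}+\beta'_{s'},\delta_{s'}}}$, which is precisely the second bracketed quantity in the statement; this identification is pure algebra via $\mu_{s'}=\bar\mu_{s'}+\delta_{s'}$, $\mu'_{s'}=\bar\mu_{s'}-\delta_{s'}$. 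A short computation with $u=x-\bar\mu_s$ and $d_s=\norm{\mu_s-\mu'_s}^2/2\sigma_X^2$ then shows $\frac{\varphi_s\psi_s}{\varphi_s+\psi_s}(x)=\tfrac12 e^{-d_s/4}\,\phi_{\sigma_X}(u)\big/\cosh\paren{\abrace{u,\delta_s}/\sigma_X^2}$, where $\phi_{\sigma_X}$ is the centered $N(0,\sigma_X^2I)$ density.

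The core step is the Gaussian integral. Using the elementary bound $\cosh z\le e^{z^2/2}$, the weight $\varphi_s\psi_s/(\varphi_s+\psi_s)$ is bounded below by a multiple of an anisotropic centered Gaussian density $\gamma_s$ whose variance is $\sigma_X^2$ in the $d-1$ directions orthogonal to $\mu_s-\mu'_s$ and is scaled by $(1+\norm{\mu_s-\mu'_s}^2/4\sigma_X^2)^{-1}$ along $\mu_s-\mu'_s$; the normalisation of $\gamma_s$ carries a power of $(1+\norm{\mu_s-\mu'_s}^2/4\sigma_X^2)$ together with an $e^{-d_s}$-type prefactor. Because this weight is even in $u$, the cross term $\int\abrace{a_s,u}\,\tilde c_s\,\gamma_s(u)\,du$ vanishes, so the integral decomposes into $\tilde c_s^2\int\gamma_s$ and $\int\abrace{a_s,u}^2\gamma_s=\abrace{a_s,\Sigma^{(s)}a_s}\int\gamma_s$; lower bounding $\abrace{a_s,\Sigma^{(s)}a_s}$ by its value in the least-variance direction gives the $\sigma_X^2\norm{a_s}^2$ form, and reading off the powers of $(1+\norm{\mu_s-\mu'_s}^2/4\sigma_X^2)$ from the normalisation and the two second moments produces the exponents claimed. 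Summing over $s$ and multiplying by $\tfrac12$ completes the proof, and \cref{thm:two-point-lower} follows by discarding the nonnegative $\tilde c_s^2$ term.

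I expect the evaluation of the anisotropic Gaussian moments to be the main obstacle — in particular, tracking the dimension-dependent power of $(1+\norm{\mu_s-\mu'_s}^2/4\sigma_X^2)$ cleanly and controlling the prefactor when passing through the $\cosh$ bound (an alternative route, $\cosh z\le e^{|z|}$, instead turns $\varphi_s\psi_s/(\varphi_s+\psi_s)$ into $\tfrac12\min(\varphi_s,\psi_s)$, a half-shifted Gaussian, and one could bound its moments directly). Everything else — the separability reduction, the closed form for the pointwise minimiser, and the algebraic identification of $\tilde c_s$ — is routine.
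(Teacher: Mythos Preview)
Your plan coincides with the paper's through the two-point average $\max\ge\tfrac12(\cdot+\cdot)$, the pointwise minimisation giving the weight $\varphi_s\psi_s/(\varphi_s+\psi_s)$, the identification of $g_s-h_s$ as the affine $\abrace{a_s,u}+\tilde c_s$, and the use of $\cosh z\le e^{z^2/2}$. The only substantive divergence is the order of operations in the last step. The paper first applies Cauchy--Schwarz \emph{inside} the $\cosh$, replacing $\abrace{u,\delta_s}$ by $\norm{u}\norm{\delta_s}$, which makes the weight radially symmetric; it then conditions on $\norm{u}=r$ (uniform on the sphere, giving $\tfrac{r^2}{d}\norm{a_s}^2+\tilde c_s^2$) and integrates $r^2\sim\mathrm{Gamma}(d/2,2\sigma_X^2)$ against $e^{-c'^2 r^2/2}$ via the hypergeometric identity. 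That isotropic rescaling is exactly what produces the $d$-dependent powers $(1+\norm{\mu_s-\mu'_s}^2/4\sigma_X^2)^{d/2}$ in the statement, because all $d$ eigenvalues of the resulting covariance are perturbed.

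Your direct route---bounding the weight below by the rank-one-perturbed Gaussian $\gamma_s$ and then reading off moments---is valid and in fact yields a \emph{tighter} lower bound, but it will not reproduce the displayed $d$-dependent exponents: since only one eigenvalue of $\Sigma^{(s)}$ differs from $\sigma_X^2$, the normalising constant contributes a single factor $(1+d_s/2)^{-1/2}$, and $\lambda_{\min}(\Sigma^{(s)})=\sigma_X^2/(1+d_s/2)$ contributes one more, so you end up with powers $-3/2$ and $-1/2$ rather than $-(1+d/2)$ and $-d/2$. Your sentence ``reading off the powers \ldots\ produces the exponents claimed'' is therefore not quite right. This is harmless for the application, because the only place \cref{thm:two-point-lower-tight} is used (via \cref{thm:two-point-lower} in \cref{thm:distinct-params1}) has $\mu_{v,s}=0$ for all $v$, so $d_s=0$ and every $(1+d_s/2)$ factor equals $1$; but if you want to match the statement verbatim you must insert the Cauchy--Schwarz step before applying $\cosh\le e^{z^2/2}$.
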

\cref{thm:two-point-lower-tight} immediately gives \cref{thm:two-point-lower}.

We utilize the sufficient condition for the constrained optimization problem over a Banach space. Let $Z$ be a Banach space. We say a function $f:Z\to\RealSet$ is \emph{Gateaux differentiable} if the limit $\lim_{\tau \to 0}\frac{f(z+\tau u)-f(z)}{\tau}$ exists for any open set $U \subseteq Z$, any $z \in U$, and any $u \in Z$. We denote the Gateaux derivative of $f$ at $z \in Z$, a linear mapping from $u \in Z$ to $\lim_{\tau \to 0}\frac{f(z+\tau u)-f(z)}{\tau}$, as $D_Gf(z)$. We abuse $0$ to denote the mapping that always outputs $0$. 
\begin{proof}[Proof of \cref{thm:two-point-lower-tight}]
  Let $q_s$ and $q'_s$ be the density function of $X_s$ with the parameters $(\beta_\cdot,\mu_\cdot)$ and $(\beta'_\cdot,\mu'_\cdot)$, respectively, regarding the base measure $\lambda$. Since $X_s$ follows the Gaussian distribution, we can choose $\lambda$ as the Lebesgue measure. Given $\eta \in [0,1]$, we have
  \begin{align}
      & \mathcal{E}(f;\beta_\cdot,\mu_\cdot)\lor\mathcal{E}(f;\beta'_\cdot,\mu'_\cdot) \\
      \ge& \eta\mathcal{E}(f;\beta_\cdot,\mu_\cdot) + (1-\eta)\mathcal{E}(f;\beta'_\cdot,\mu'_\cdot) \\
      =& \sum_{s \in [M]}p_s\int\paren*{\eta \paren*{f(x,s) - f_{\beta_\cdot,\mu_\cdot}(x,s)}^2q_s(x) + (1-\eta)\paren*{f(x,s) - f_{\beta'_\cdot,\mu'_\cdot}(x,s)}^2q'_s(x)}\lambda(dx).
  \end{align}
  Because $\eta\mathcal{E}(f;\beta_\cdot,\mu_\cdot) + (1-\eta)\mathcal{E}(f;\beta'_\cdot,\mu'_\cdot)$ is convex for $f$, and $\dom{L}^2$ is a Banach space, it is minimized if for any $u \in \dom{L}^2$, 
  \begin{align}
      \frac{d}{d\gamma}\paren*{\eta\mathcal{E}(f+\gamma u;\beta_\cdot,\mu_\cdot) + (1-\eta)\mathcal{E}(f+\gamma u;\beta'_\cdot,\mu'_\cdot)}\Bigr|_{\gamma=0} = 0.
  \end{align}
  
  The dominated convergence theorem gives
  \begin{align}
      & \frac{d}{d\gamma}\paren*{\eta\mathcal{E}(f+\gamma u;\beta_\cdot,\mu_\cdot) + (1-\eta)\mathcal{E}(f+\gamma u;\beta'_\cdot,\mu'_\cdot)}\Bigr|_{\gamma=0} \\
      =& \begin{multlined}[t][.95\textwidth]
        \sum_{s \in [M]}p_s\int \paren[\big]{\eta\paren*{f(x,s)-f_{\beta_\cdot,\mu_\cdot}(x,s)}u(x,s)q_s(x) \\ + (1-\eta)\paren*{f(x,s)-f_{\beta'_\cdot,\mu'_\cdot}(x,s)}u(x,s)q'_s(x)}\lambda(dx) 
        \end{multlined} \\
      =& \begin{multlined}[t][.95\textwidth]
        \sum_{s \in [M]}p_s\int \paren[\big]{f(x,s)\paren*{\eta q_s(x) + (1-\eta)q'_s(x)} \\ - \paren*{\eta f_{\beta_\cdot,\mu_\cdot}(x,s)q_s(x) + (1-\eta)f_{\beta'_\cdot,\mu'_\cdot}(x,s)q'_s(x)}}u(x,s)\lambda(dx).
        \end{multlined}
  \end{align}
  Consequently, $\eta\mathcal{E}(f;\beta_\cdot,\mu_\cdot) + (1-\eta)\mathcal{E}(f;\beta'_\cdot,\mu'_\cdot)$ is minimized at
  \begin{align}
      f(x,s) = \frac{\eta f_{\beta_\cdot,\mu_\cdot}(x,s)q_s(x) + (1-\eta)f_{\beta'_\cdot,\mu'_\cdot}(x,s)q'_s(x)}{\eta q_s(x) + (1-\eta)q'_s(x)}.
  \end{align}
  Hence, 
  \begin{align}
      & \eta\mathcal{E}(f+\gamma u;\beta_\cdot,\mu_\cdot) + (1-\eta)\mathcal{E}(f+\gamma u;\beta'_\cdot,\mu'_\cdot) \\
      \ge& \sum_{s \in [M]}p_s\int \frac{\eta(1-\eta)q_s(x)q'_s(x)}{\eta q_s(x)+(1-\eta)q'_s(x)}\paren*{f_{\beta_\cdot,\mu_\cdot}(x,s) - f_{\beta'_\cdot,\mu'_\cdot}(x,s)}^2\lambda(dx).
  \end{align}
  With $\eta = \nicefrac{1}{2}$, we have
  \begin{align}
      \frac{\eta(1-\eta)q_s(x)q'_s(x)}{\eta q_s(x)+(1-\eta)q'_s(x)} 
      =& \frac{1}{4}\frac{\sqrt{q_s(x)q'_s(x)}}{\frac{1}{2}\sqrt{\frac{q_s(x)}{q'_s(x)}}+\frac{1}{2}\sqrt{\frac{q'_s(x)}{q_s(x)}}} \\
      =& \frac{1}{4}\frac{\sqrt{q_s(x)q'_s(x)}}{\cosh\paren{\frac{1}{2}\ln\frac{q_s(x)}{q'_s(x)}}}.
  \end{align}
  Let $\bar\mu_s = \frac{1}{2}\paren{\mu_s+\mu'_s}$. Then, we have
  \begin{align}
    & \sqrt{q_s(x)q'_s(x)} \\
    =& \frac{1}{\sqrt{(2\pi)^d\sigma_X^{2d}}}\exp\paren*{-\frac{1}{4\sigma_X^2}\norm*{x - \mu_s}^2 - \frac{1}{4\sigma_X^2}\norm*{x - \mu'_s}^2} \\
    =& \begin{multlined}[t][.95\textwidth]\frac{1}{\sqrt{(2\pi)^d\sigma_X^{2d}}}\exp\paren[\Bigg]{-\frac{1}{2\sigma_X^2}\norm*{x - \bar\mu_s}^2 - \frac{1}{2\sigma_X^2}\norm*{\frac{\mu_s - \mu'_s}{2}}^2 \\ - \frac{1}{2\sigma_X^2}\paren*{\abrace*{x - \bar\mu_s,\frac{\mu'_s - \mu_s}{2} + \frac{\mu_s - \mu'_s}{2}}}} \end{multlined}\\
    =& \frac{1}{\sqrt{(2\pi)^d\sigma_X^{2d}}}\exp\paren*{-\frac{1}{2\sigma_X^2}\norm*{x - \bar\mu_s}^2 - \frac{1}{2\sigma_X^2}\norm*{\frac{\mu_s - \mu'_s}{2}}^2}.
  \end{align}
  Also, we have
  \begin{align}
    & \frac{q_s(x)}{q'_s(x)} \\
    =& \exp\paren*{-\frac{1}{2\sigma^2_X}\norm*{x - \mu_s}^2 + \frac{1}{2\sigma^2_X}\norm*{x - \mu'_s}^2 } \\
    =& \exp\paren*{-\frac{1}{\sigma^2_X}\abrace*{x - \bar\mu_s, \frac{\mu'_s - \mu_s}{2} - \frac{\mu_s - \mu'_s}{2}}} \\
    =& \exp\paren*{\frac{1}{\sigma^2_X}\abrace*{x - \bar\mu_s, \mu_s - \mu'_s}}.
  \end{align}
  Let $\bar{X}_s \sim N(\bar\mu_s,\sigma^2_XI)$. Then, we have
  \begin{align}
    & \mathcal{E}(f;\beta_\cdot,\mu_\cdot)\lor\mathcal{E}(f;\beta'_\cdot,\mu'_\cdot) \\
    \ge& \sum_{s \in [M]}p_se^{-\frac{1}{2\sigma^2_X}\norm{\frac{\mu_s-\mu'_s}{2}}^2}\Mean\bracket*{\frac{1}{4}\frac{\paren{f_{\beta_\cdot,\mu_\cdot}(\bar{X}_s,s) - f_{\beta'_\cdot,\mu'_\cdot}(\bar{X}_s,s)}^2}{\cosh\paren{\frac{1}{2\sigma^2_X}\abrace*{\bar{X}_s - \bar\mu_s, \mu_s - \mu'_s}}}} \\
    \ge& \sum_{s \in [M]}p_se^{-\frac{1}{2\sigma^2_X}\norm{\frac{\mu_s-\mu'_s}{2}}^2}\Mean\bracket*{\frac{1}{4}\frac{\paren{f_{\beta_\cdot,\mu_\cdot}(\bar{X}_s,s) - f_{\beta'_\cdot,\mu'_\cdot}(\bar{X}_s,s)}^2}{\cosh\paren{\frac{1}{2\sigma^2_X}\norm*{\bar{X}_s - \bar\mu_s}\norm*{\mu_s - \mu'_s}}}}, \label{eq:two-point-lower1}
  \end{align}
  where the last line is obtained from the Cauchy–Schwarz inequality.
    
  By definition, we have
  \begin{align}
    & f_{\beta_\cdot,\mu_\cdot}(x,s) - f_{\beta'_\cdot,\mu'_\cdot}(x,s) \\
    =& \begin{multlined}[t][.95\textwidth]
      \abrace*{\frac{\overline{\norm{\beta_\cdot}}\beta_s}{\norm{\beta_s}} - \frac{\overline{\norm{\beta'_\cdot}}\beta'_s}{\norm{\beta'_s}},x - \bar\mu_s} - \abrace*{\frac{\overline{\norm{\beta_\cdot}}\beta_s}{\norm{\beta_s}} + \frac{\overline{\norm{\beta'_\cdot}}\beta'_s}{\norm{\beta'_s}},\frac{\mu_s-\mu'_s}{2}} \\ + \sum_{s'\in[M]}p_{s'}\paren*{\abrace*{\beta_{s'}-\beta'_{s'},\bar\mu_{s'}} + \abrace*{\beta_{s'}+\beta'_{s'},\frac{\mu_{s'}-\mu'_{s'}}{2}}}.
    \end{multlined}
  \end{align}
  Conditioned on $\norm{\bar{X}_s - \bar\mu_s}=r$ for $r > 0$, $\bar{X}_s$ follows the uniform distribution over the $(d-1)$-sphere centered at $\bar\mu_s$. For a random variable $U$ uniformly distributed over the $(d-1)$-sphere centered at origin with the radius $r$, $\Mean[U] = 0$ and $\Mean[UU^\top]=\nicefrac{r^2}{d}I$. Hence, for a vector $v \in \RealSet^d$ and a scalar $c \in \RealSet$, we have
  \begin{align}
      & \Mean\bracket*{\paren*{\abrace*{v,\bar{X}_s-\mu_s} + c}^2 \middle| \norm{\bar{X}_s - \bar\mu_s}=r} \\
      =& \frac{r^2}{d}\norm*{v}^2 + c^2.
  \end{align}
  
  An elementary analysis yields that $\norm{\bar{X}_s - \bar\mu_s}^2 \sim \mathrm{Gamma}(\frac{d}{2},2\sigma^2_X)$, where $\mathrm{Gamma}(k,\theta)$ denotes the Gamma distribution with the shape parameter $k$ and scale parameter $\theta$. From the upper bound of the hyperbolic cosine as $\cosh(x) \le e^{\nicefrac{x^2}{2}}$, for a vector $v \in \RealSet^d$, scalars $c \in \RealSet$ and $c' > 0$, and a random variable $\gamma \sim \mathrm{Gamma}(k,\theta)$, we have
  \begin{align}
      &\Mean\bracket*{\frac{1}{\cosh(c'\sqrt{\gamma})}\paren*{\frac{\gamma}{d}\norm*{v}^2+c^2}} \\
      \ge& \Mean\bracket*{\paren*{\frac{\gamma}{d}\norm*{v}^2+c^2}e^{-\frac{c'^2\gamma}{2}}} \\
      =& \Mean\bracket*{\paren*{\frac{\gamma}{d}\norm*{v}^2+c^2}\sum_{m=0}^\infty(-1)^m\frac{c'^{2m}\gamma^m}{2^mm!}} \\
      =& \sum_{m=0}^\infty(-1)^m\frac{c'^{2m}}{2^mm!}\paren*{\frac{\norm{v}^2}{d}\theta^{m+1}\frac{\Gamma(k+m+1)}{\Gamma(k)}+c^2\theta^{m}\frac{\Gamma(k+m)}{\Gamma(k)}} \\
      =& \sum_{m=0}^\infty\paren*{-\frac{c'^{2}\theta}{2}}^m\frac{1}{m!}\paren*{\frac{k\norm{v}^2}{d}\theta\frac{\Gamma(k+1+m)}{\Gamma(k+1)} + c^2\frac{\Gamma(k+m)}{\Gamma(k)}} \\
      =& \frac{k\norm{v}^2}{d}\theta\paren*{1+\frac{c'^{2}\theta}{2}}^{k+1} + c^2\paren*{1+\frac{c'^{2}\theta}{2}}^{k},
  \end{align}
  where we use the fact that the hypergeometric function $_2F_1(a,b,b;z)=\sum_{m=0}^\infty\frac{\Gamma(a+m)}{\Gamma(a)}\frac{z^m}{m!} = (1-z)^{-a}$ for some $b$, provided $\abs{z} < 1$. By setting
  \begin{align}
      v =& \frac{\overline{\norm{\beta_\cdot}}\beta_s}{\norm{\beta_s}} - \frac{\overline{\norm{\beta'_\cdot}}\beta'_s}{\norm{\beta'_s}} \\
      c =& \begin{multlined}[t][.95\textwidth] - \abrace*{\frac{\overline{\norm{\beta_\cdot}}\beta_s}{\norm{\beta_s}} + \frac{\overline{\norm{\beta'_\cdot}}\beta'_s}{\norm{\beta'_s}},\frac{\mu_s-\mu'_s}{2}} \\ + \sum_{s'\in[M]}p_{s'}\paren*{\abrace*{\beta_{s'}-\beta'_{s'},\bar\mu_{s'}} + \abrace*{\beta_{s'}+\beta'_{s'},\frac{\mu_{s'}-\mu'_{s'}}{2}}} \end{multlined} \\
      c' =& \frac{1}{2\sigma^2_X}\norm*{\mu_s - \mu'_s} \\
      k =& \frac{d}{2}, \textand \theta = 2\sigma^2_X,
  \end{align}
  we have 
  \begin{align}
      & \Mean\bracket*{\frac{\paren{f_{\beta_\cdot,\mu_\cdot}(\bar{X}_s,s) - f_{\beta'_\cdot,\mu'_\cdot}(\bar{X}_s,s)}^2}{\cosh\paren{\frac{1}{2\sigma^2_X}\norm*{\bar{X}_s - \bar\mu_s}\norm*{\mu_s - \mu'_s}}}} \\
      =& \begin{multlined}[t][.95\textwidth]
        \sigma^2_X\norm*{\frac{\overline{\norm{\beta_\cdot}}\beta_s}{\norm{\beta_s}} - \frac{\overline{\norm{\beta'_\cdot}}\beta'_s}{\norm{\beta'_s}}}^2\paren*{1+\frac{\norm{\mu_s-\mu'_s}^2}{4\sigma^2_X}}^{1+\frac{d}{2}} + \paren[\Bigg]{- \abrace*{\frac{\overline{\norm{\beta_\cdot}}\beta_s}{\norm{\beta_s}} + \frac{\overline{\norm{\beta'_\cdot}}\beta'_s}{\norm{\beta'_s}},\frac{\mu_s-\mu'_s}{2}} \\+ \sum_{s'\in[M]}p_{s'}\paren*{\abrace*{\beta_{s'}-\beta'_{s'},\bar\mu_{s'}} + \abrace*{\beta_{s'}+\beta'_{s'},\frac{\mu_{s'}-\mu'_{s'}}{2}}}}^2\paren*{1+\frac{\norm{\mu_s-\mu'_s}^2}{4\sigma^2_X}}^{\frac{d}{2}}. \label{eq:two-point-lower2}
      \end{multlined}
  \end{align}
  Combining \cref{eq:two-point-lower1,eq:two-point-lower2} yields the claim.
\end{proof}

\subsection{Proof of \crtcref{thm:distinct-params1}}

\begin{proof}[Proof of \cref{thm:distinct-params1}]
 It is easy to check that $d_s = 0$, and for any $v,v' \in \dom{V}$, 
 \begin{align}
     &\norm*{\frac{\overline{\norm{\beta_{v,\cdot}}}\beta_{v,s}}{\norm{\beta_{v,s}}} - \frac{\overline{\norm{\beta_{v',\cdot}}}\beta_{v',s}}{\norm{\beta_{v',s}}}}^2 \\
     =& \paren*{\sum_{s'\in[M]}p_{s'}\norm*{\beta_{v,s}}}^2\norm*{\frac{\beta_{v,s}}{\norm{\beta_{v,s}}} - \frac{\beta_{v',s}}{\norm{\beta_{v',s}}}}^2 \\
     =& \paren*{\sum_{s'\in[M]}p_{s'}\norm*{\beta_{v,s}}}^2\paren*{\sum_{i \in [d-1]}\frac{\epsilon^2_s}{d-1}\paren*{v_{s,i} - v'_{s,i}}^2} \\
     =& 4\paren*{\sum_{s'\in[M]}p_{s'}B_s}^2\frac{\epsilon^2_s}{d-1}d_H(v_s,v'_s).
 \end{align}
 Since the density function of the Gaussian distribution is $L^2$ integrable, $\mathcal{E}(f;\theta) = \infty$ if $f$ is not $L^2$ integrable. Hence, $\inf_f\mathcal{E}(f;\theta_v)\lor\mathcal{E}(f;\theta_{v'}) = \inf_{f\in\dom{L}^2}\mathcal{E}(f;\theta_v)\lor\mathcal{E}(f;\theta_{v'})$, and we thus can apply \cref{thm:two-point-lower}. Then, we have
 \begin{align}
     \inf_f\mathcal{E}(f;\theta_v)\lor\mathcal{E}(f;\theta_{v'}) \ge \sum_{s\in[M]}p_s\paren*{\sum_{s'\in[M]}p_{s'}B_s}^2\frac{\sigma^2_X\epsilon^2_s}{d-1}d_H(v_s,v'_s)
 \end{align}

Conditioned on $n_\cdot$, the KL-divergence between $\pi_{\theta_v|n_\cdot}$ and $\pi_{\theta_{v'}|n_\cdot}$ is obtained as
\begin{align}
    \sum_{s \in [M]}n_s\paren*{\frac{1}{2\sigma^2_X}\norm*{\mu_{v,s}-\mu_{v',s}}^2 + \frac{\sigma^2_X}{2\sigma^2_\xi}\norm*{\beta_{v,s} - \beta_{v',s}}^2 + \frac{1}{2\sigma^2_\xi}\abrace*{\mu_{v,s},\beta_{v,s}-\beta_{v',s}}^2}.
\end{align}
 Hence, we have
 \begin{align}
     \KL\paren*{\pi_{\theta_v|n_\cdot},\pi_{\theta_{v'},n_\cdot}} = \sum_{s\in[M]}\frac{2\sigma^2_XB_s^2n_s\epsilon_s^2}{\sigma^2_\xi(d-1)}d_H(v_s,v'_s).
 \end{align}
\end{proof}

\end{document}